\documentclass[11pt]{amsart}

\usepackage{a4wide, amsmath, amsfonts, amssymb, mathrsfs, amsthm, breqn}
\usepackage[hyperfootnotes=false]{hyperref}

\allowdisplaybreaks[2]

\numberwithin{equation}{section}

\newtheorem{theorem}{Theorem}[section]
\newtheorem{lemma}[theorem]{Lemma}
\newtheorem{corollary}[theorem]{Corollary}
\newtheorem{remark}[theorem]{Remark}
\newtheorem{proposition}[theorem]{Proposition}
\newtheorem*{property}{Property}

\newcommand{\dd}{\,\mathrm{d}}
\renewcommand{\d}{\mathrm{d}}
\newcommand{\D}{\mathrm{D}}
\renewcommand{\epsilon}{\varepsilon}
\renewcommand{\phi}{\varphi}
\newcommand{\R}{\mathbb{R}}
\newcommand{\N}{\mathbb{N}}
\newcommand{\E}{\mathbb{E}}
\newcommand{\W}{\mathbb{W}}
\newcommand{\X}{\mathbb{X}}
\renewcommand{\P}{\mathbb{P}}
\newcommand{\1}{\mathbf{1}}
\newcommand{\bX}{\mathbf{X}}
\newcommand{\bW}{\mathbf{W}}
\newcommand{\cP}{\mathcal{P}}
\newcommand{\cB}{\mathcal{B}}
\newcommand{\cD}{\mathcal{D}}
\newcommand{\cF}{\mathcal{F}}
\newcommand{\cL}{\mathcal{L}}
\newcommand{\cM}{\mathcal{M}}
\newcommand{\cV}{\mathcal{V}}
\newcommand{\tY}{\widetilde{Y}}
\newcommand{\tA}{\widetilde{A}}
\newcommand{\tH}{\widetilde{H}}
\newcommand{\tX}{\widetilde{X}}
\newcommand{\tbbX}{\widetilde{\mathbb{X}}}
\newcommand{\tbX}{\widetilde{\mathbf{X}}}
\newcommand{\ty}{\widetilde{y}}
\newcommand{\tcP}{\widetilde{\mathcal{P}}}

\title[The Euler scheme for RDEs and SDEs]{Pathwise convergence of the Euler scheme for rough and stochastic differential equations}

\author[Allan]{Andrew L. Allan}
\address{Andrew L. Allan, Durham University, United Kingdom}
\email{andrew.l.allan@durham.ac.uk}

\author[Kwossek]{Anna P. Kwossek}
\address{Anna P. Kwossek, University of Vienna, Austria}
\email{anna.paula.kwossek@univie.ac.at}

\author[Liu]{Chong Liu}
\address{Chong Liu, ShanghaiTech University, China}
\email{liuchong@shanghaitech.edu.cn}

\author[Pr{\"o}mel]{David J. Pr{\"o}mel}
\address{David J. Pr{\"o}mel, University of Mannheim, Germany}
\email{proemel@uni-mannheim.de}

\date{\today}

\begin{document}

\begin{abstract}
  The convergence of the first order Euler scheme and an approximative variant thereof, along with convergence rates, are established for rough differential equations driven by c{\`a}dl{\`a}g paths satisfying a suitable criterion, namely the so-called Property (RIE), along time discretizations with vanishing mesh size. This property is then verified for almost all sample paths of Brownian motion, It{\^o} processes, L{\'e}vy processes and general c{\`a}dl{\`a}g semimartingales, as well as the driving signals of both mixed and rough stochastic differential equations, relative to various time discretizations. Consequently, we obtain pathwise convergence in $p$-variation of the Euler--Maruyama scheme for stochastic differential equations driven by these processes.
\end{abstract}

\maketitle

\noindent \textbf{Keywords:} Euler--Maruyama scheme, rough differential equation, stochastic differential equation, L{\'e}vy process, c{\`a}dl{\`a}g semimartingale, fractional Brownian motion.

\noindent \textbf{MSC 2020 Classification:} 65C30, 65L20, 60H35, 60L20.


\section{Introduction}

Stochastic differential equations serve as models for dynamical systems which evolve randomly in time, and are fundamental mathematical objects, essential to numerous applications in finance, engineering, biology and beyond. In a fairly general form, a stochastic differential equation (SDE) is given by
\begin{equation}\label{eq:intro RDE Y}
  Y_t = y_0 + \int_0^t b(s,Y_s) \dd s + \int_0^t \sigma(s,Y_s) \dd X_s, \qquad t \in [0,T],
\end{equation}
where $y_0 \in \R^k$ is the initial condition, $b \colon [0,T] \times \R^k \to \R^k$ and $\sigma \colon [0,T] \times \R^k \to \R^{k \times d}$ are coefficients, and the driving signal $X = (X_t)_{t \in [0,T]}$ is a $d$-dimensional stochastic process which models the random noise affecting the system.

Assuming that $X$ is a c\`adl\`ag semimartingale, such as a Brownian motion or a L{\'e}vy process, and the coefficients $b, \sigma$ are suitably regular, it is well known that \eqref{eq:intro RDE Y} is well-posed as an It{\^o} SDE. That is, $\int_0^t \sigma(s,Y_s) \dd X_s$ can be defined as a stochastic It{\^o} integral, and the equation admits a unique adapted solution $Y = (Y_t)_{t \in [0,T]}$; see, e.g., \cite{Protter2005}. Unfortunately, such SDEs, including many of those which appear in practical applications, can rarely be solved explicitly, which has led to a vast literature on various numerical approximations of the solutions to SDEs; see, e.g., \cite{Kloeden1992}.

One of the most common approaches to numerically approximate the solution of an SDE is to rely on a time-discretized modification of the equation. This type of discretization is implemented in particular by the Euler scheme (also called the Euler--Maruyama scheme) and its higher order variants. For the SDE \eqref{eq:intro RDE Y}, the (first order) Euler approximation is defined by
\begin{equation}\label{eq:into Euler scheme}
  Y^n_t = y_0 + \sum_{i \hspace{1pt} : \hspace{1pt} t^n_{i+1} \leq t} b(t^n_i,Y^n_{t^n_i}) (t^n_{i+1} - t^n_i) + \sum_{i \hspace{1pt} : \hspace{1pt} t^n_{i+1} \leq t} \sigma(t^n_i,Y^n_{t^n_i}) (X_{t^n_{i+1}} - X_{t^n_i}),
\end{equation}
for $t \in [0,T]$, along a sequence of partitions $\cP^n = \{0 = t^n_0 < t^n_1 < \cdots < t^n_{N_n} = T\}$. Higher order Euler approximations, such as the Milstein scheme, introduce additional higher order correction terms in the approximation \eqref{eq:into Euler scheme}, which often involve iterated integrals of the driving signal $X$. In general, the numerical calculation of the approximation $Y^n$ is carried out path by path, motivating a pathwise convergence analysis of the Euler scheme and its higher order variants. Indeed, it is well known that, for SDEs driven by Brownian motion, the (higher-order) Euler approximations converge pathwise; see, e.g., \cite{Bichteler1981,Karandikar1991,Gyongy1998,Kloeden2007,Shardlow2016}.

A fully pathwise solution theory for SDEs like \eqref{eq:intro RDE Y} is provided by the theory of rough paths; see, e.g., \cite{Friz2020,Friz2010}. Loosely speaking, in our context, a rough path is pair $\bX = (X,\X)$, consisting of a deterministic c{\`a}dl{\`a}g $\R^d$-valued path $X$, and a two-parameter c{\`a}dl{\`a}g $\R^{d \times d}$-valued function $\X$, which satisfy certain analytic and algebraic conditions. We will work with c{\`a}dl{\`a}g rough paths with finite $p$-variation, in the regime with $p \in (2,3)$, which includes in particular almost any sample path of a general semimartingale $X$, in which case the corresponding rough path $\bX = (X,\X)$ is given by $\X_{s,t} = \int_s^t (X_{r-} - X_s) \otimes \d X_r$ via stochastic integration.

Replacing the stochastic driving signal $X$ in \eqref{eq:intro RDE Y} by a (deterministic) rough path $\bX = (X,\X)$, we obtain a so-called rough differential equation (RDE). Assuming sufficient regularity of the coefficients $b, \sigma$, the RDE \eqref{eq:intro RDE Y} driven by a given c{\`a}dl{\`a}g rough path $\bX = (X,\X)$ is well-posed, in the sense that $\int_0^t \sigma(s,Y_s) \dd \bX_s$ is defined as a rough integral, and the equation admits a unique solution $Y = (Y_t)_{t \in [0,T]}$; see \cite{Friz2018}. Moreover, if the rough path is, say, the It\^o lift of a semimartingale $X$, then the solution of the resulting random RDE is consistent with the solution of the corresponding SDE driven by $X$. Both interpretations of the equation are thus essentially equivalent. Furthermore, in contrast to classical SDE theory, rough path theory is not limited to the semimartingale setting, and it comes with powerful pathwise stability estimates.

Rough path theory is intrinsically linked to the numerical approximation of SDEs---see, e.g., \cite{Davie2008,Bailleul2015}---and provides a transparent explanation for the pathwise convergence of higher order Euler approximations and their modifications; see, e.g., \cite{Friz2008,Friz2010,Deya2012a,Friz2018,Liu2019}. More precisely, the existence of a rough path lift of the driving signal is a sufficient condition for the pathwise convergence of higher order Euler schemes for RDEs, thus implying pathwise convergence for the corresponding SDEs driven by, e.g., semimartingales. However, the pathwise convergence of the first order Euler scheme---the most prominent numerical scheme for differential equations---cannot be explained by the rough path lift of the driving signal. Moreover, in general, an Euler approximation cannot converge to the solution of an RDE driven by an arbitrary rough path, for at least two reasons: First, the Euler approximation for an SDE driven by a fractional Brownian motion with Hurst index $H < \frac{1}{2}$ fails to converge (see, e.g., \cite{Deya2012a}), and second, while the rough path lift $\bX = (X,\X)$ of a path $X$ is not unique, leading to potentially multiple solutions of the RDE, the Euler approximation $Y^n$ as defined in \eqref{eq:into Euler scheme} is independent of the choice of rough path, and can thus only converge to at most one such solution.

In the present paper we clarify the gap between rough and stochastic differential equations from the perspective of numerical approximation, by establishing the convergence of the first order Euler scheme for RDEs driven by It\^o-type rough path lifts. More precisely, in Theorem~\ref{thm: Euler scheme convergence} we obtain convergence in $p$-variation of the Euler scheme for RDEs driven by c{\`a}dl{\`a}g paths satisfying a suitable criterion---namely the so-called Property \textup{(RIE)}---relative to a sequence of partitions with vanishing mesh size.

Property \textup{(RIE)} was first introduced in \cite{Perkowski2016} and \cite{Allan2023b}, motivated by applications in mathematical finance under model uncertainty. While, strictly speaking, it is a condition on a c{\`a}dl{\`a}g path $X \colon [0,T] \to \R^d$, it always ensures the existence of an It\^o-type rough path lift $\bX = (X,\X)$, allowing one to treat \eqref{eq:intro RDE Y} as an RDE. Using this fact, we will show that Property \textup{(RIE)} is a sufficient condition on the sample paths of a stochastic driving signal to guarantee the convergence of the first order Euler scheme for the corresponding SDE. We note in particular that the Euler scheme converges surely on the set where the stochastic driving signal satisfies Property \textup{(RIE)}, which is a stronger statement compared to the earlier results in \cite{Bichteler1981,Karandikar1991,Gyongy1998,Kloeden2007,Shardlow2016}, in which the set on which the Euler scheme converges can depend on the coefficients $b, \sigma$. A criterion for H{\"o}lder continuous rough paths, related to Property \textup{(RIE)}, was previously introduced by Davie \cite{Davie2008}, which also allows one to obtain convergence of the Euler scheme for RDEs, and will be discussed in more detail in Remark~\ref{rem: Davie's criterion}.

Exploiting the continuity results of rough path theory, in Theorem~\ref{thm: Euler scheme convergence} we derive a precise error estimate in $p$-variation for the Euler approximation of RDEs with respect to the discretization error of the driving signal. The convergence rate is expressed transparently, in terms of the mesh size of the approximating partition, and the approximation error of the discretized signal and of its rough path lift. We also obtain an error estimate for the Euler approximation with respect to pathwise perturbations of the driving signal; see Proposition~\ref{prop:approximate Euler scheme}. This latter perturbation is motivated by so-called approximate Euler schemes for SDEs driven by jump processes; see, e.g., \cite{Jacod2005,Rubenthaler2003,Dereich2011}. For instance, approximate Euler schemes are used for L{\'e}vy-driven SDEs, since the increments of L{\'e}vy processes cannot always be simulated, and thus the increments of the driving L{\'e}vy process need to be approximated by random variables with known distributions.

To obtain pathwise convergence of the Euler scheme in $p$-variation for an SDE, it is then sufficient to verify that the associated stochastic driving signal of the equation satisfies Property \textup{(RIE)}, almost surely, relative to a sequence of partitions; see Sections~\ref{sec:SDE} and \ref{sec:RSDE}. Unsurprisingly, we find that the more regular the driving signal is, the more general the sequence of partitions may be chosen. Indeed, while the sample paths of a Brownian motion satisfy Property \textup{(RIE)}, almost surely, relative to sequences of partitions whose mesh size can converge to zero very slowly, the sample paths of more general It{\^o} processes satisfy Property \textup{(RIE)}, almost surely, relative to sequences of partitions whose mesh size is of order $2^{-n}$. For stochastic processes with jumps, such as L{\'e}vy processes or general c{\`a}dl{\`a}g semimartingales, one needs to ensure that the jump times are exhausted by the sequence of partitions, which is a necessary condition, for both the Euler scheme to converge pathwise, and for Property \textup{(RIE)} to be satisfied by the driving signal.

The presented pathwise analysis of the first order Euler approximation is not limited to SDEs in a semimartingale setting. As examples, we consider mixed SDEs driven by both Brownian motion and fractional Brownian motion with Hurst index $H > \frac{1}{2}$, as in, e.g., \cite{Zahle2001,Mishura2011}, as well as rough SDEs, which are differential equations driven by both a rough path and a Brownian motion; see \cite{Friz2021}. The latter equations are of interest, e.g., in the context of robust stochastic filtering; see \cite{Crisan2013,Diehl2015}.

\medskip

\noindent \textbf{Organization of the paper:} In Section~\ref{sec:RDE} we prove the convergence of the Euler scheme for RDEs assuming that the driving paths satisfy Property \textup{(RIE)}. In Sections~\ref{sec:SDE} and \ref{sec:RSDE} we provide various examples of stochastic processes which satisfy Property \textup{(RIE)} along suitable sequences of partitions, making the established convergence analysis applicable to the corresponding SDEs, and derive associated convergence rates.

\medskip

\noindent\textbf{Acknowledgments:} A.~P.~Kwossek and D.~J.~Pr{\"o}mel gratefully acknowledge financial support by the Baden-W{\"u}rttemberg Stiftung, and would like to thank A.~Neuenkirch for fruitful discussions which helped to improve the present work. A.~P.~Kwossek was affiliated with the University of Mannheim for the majority of this project's duration.

\section{The Euler scheme for rough differential equations}\label{sec:RDE}

In this section we study convergence of the (first order) Euler scheme for RDEs, which does not rely on the L{\'e}vy area of the path, and is known to converge pathwise for certain classes of SDEs. Before treating the Euler scheme, we will first recall some essentials from the theory of c{\`a}dl{\`a}g rough paths, as introduced in \cite{Friz2017,Friz2018}.

\subsection{Essentials on rough path theory}

A \emph{partition} $\mathcal{P}$ of an interval $[s,t]$ is a finite set of points between and including the points $s$ and $t$, i.e., $\mathcal{P} = \{s = u_0 < u_1 < \cdots < u_N = t\}$ for some $N \in \N$, and its mesh size is denoted by $|\mathcal{P}| := \max\{|u_{i+1} - u_i| \, : \, i = 0, \ldots, N-1\}$. A sequence $(\cP^n)_{n \in \N}$ of partitions is said to be \emph{nested}, if $\cP^n \subset \cP^{n+1}$ for all $n \in \N$.

Throughout, we let $T > 0$ be a fixed finite time horizon. We let $\Delta_T := \{(s,t) \in [0,T]^2 \, : \, s \leq t\}$ denote the standard $2$-simplex. A function $w \colon \Delta_T \to [0,\infty)$ is called a \emph{control function} if it is superadditive, in the sense that $w(s,u) + w(u,t) \leq w(s,t)$ for all $0 \leq s \leq u \leq t \leq T$. For two vectors $x = (x^1, \ldots, x^d), y = (y^1, \ldots, y^d) \in \R^d$ we use the usual tensor product
\begin{equation*}
  x \otimes y := (x^i y^j)_{i, j = 1, \ldots, d} \in \R^{d \times d}.
\end{equation*}
Whenever $(B,\|\cdot\|)$ is a normed space and $f, g \colon B \to \R$ are two functions on $B$, we shall write $f \lesssim g$ or $f \leq C g$ to mean that there exists a constant $C > 0$ such that $f(x) \leq C g(x)$ for all $x \in B$. The constant $C$ may depend on the normed space, e.g., through its dimension or regularity parameters.

\medskip

The space of linear maps from $\R^d \to \R^n$ is denoted by $\mathcal{L}(\R^d;\R^n)$, and we write, e.g., $C_b^k(\R^m;\mathcal{L}(\R^d;\R^n))$ for the space of $k$-times differentiable (in the Fr{\'e}chet sense) functions $f \colon \R^m \to \mathcal{L}(\R^d;\R^n)$ such that $f$ and all its derivatives up to order $k$ are continuous and bounded. We equip this space with the norm
\begin{equation*}
  \|f\|_{C^k_b} := \|f\|_\infty + \|\D f\|_\infty + \cdots + \|\D^k f\|_\infty,
\end{equation*}
where $\D^r f$ denotes the $r$-th order derivative of $f$, and $\|\hspace{0.5pt}\cdot\hspace{0.5pt}\|_{\infty}$ denotes the supremum norm on the corresponding spaces of operators.

\medskip

For a normed space $(E,|\cdot|)$, we let $D([0,T];E)$ denote the set of c{\`a}dl{\`a}g (right-continuous with left-limits) paths from $[0,T]$ to $E$. For $X \in D([0,T];E)$, the supremum norm of the path $X$ is given by
\begin{equation*}
\|X\|_{\infty} := \sup_{t \in [0,T]} |X_t|,
\end{equation*}
and, for $p \geq 1$, the $p$-variation of the path $X$ is given by
\begin{equation*}
  \|X\|_p := \|X\|_{p,[0,T]} \qquad \text{with} \qquad \|X\|_{p,[s,t]} := \bigg(\sup_{\mathcal{P}\subset[s,t]} \sum_{[u,v]\in \mathcal{P}} |X_v - X_u|^p \bigg)^{\frac{1}{p}}, \quad (s,t) \in \Delta_T,
\end{equation*}
where the supremum is taken over all possible partitions $\mathcal{P}$ of the interval $[s,t]$. We recall that, given a path $X$, we have that $\|X\|_p < \infty$ if and only if there exists a control function $w$ such that\footnote{Here and throughout, we adopt the convention that $\frac{0}{0} := 0$.}
\begin{equation*}
  \sup_{(u,v) \in \Delta_T} \frac{|X_v - X_u|^p}{w(u,v)} < \infty.
\end{equation*}
We write $D^p = D^p([0,T];E)$ for the space of paths $X \in D([0,T];E)$ which satisfy $\|X\|_p < \infty$. Moreover, for a path $X \in D([0,T];\R^d)$, we will often use the shorthand notation:
\begin{equation*}
  X_{s,t} := X_t - X_s
  \quad \text{and} \quad
  X_{t-}:= \lim_{u \nearrow t} X_u, \qquad \text{for} \quad (s,t) \in \Delta_T.
\end{equation*}

For $r \geq 1$ and a two-parameter function $\mathbb{X} \colon \Delta_T \to E$, we similarly define
\begin{equation*}
  \|\mathbb{X}\|_r := \|\mathbb{X}\|_{r,[0,T]} \qquad \text{with} \qquad \|\mathbb{X}\|_{r,[s,t]} := \bigg(\sup_{\mathcal{P} \subset [s,t]} \sum_{[u,v] \in \mathcal{P}} |\mathbb{X}_{u,v}|^r\bigg)^{\frac{1}{r}}, \quad (s,t) \in \Delta_T.
\end{equation*}
We write $D_2^r = D_2^r(\Delta_T;E)$ for the space of all functions $\mathbb{X} \colon \Delta_T \to E$ which satisfy $\|\X\|_r < \infty$, and are such that the maps $s \mapsto \mathbb{X}_{s,t}$ for fixed $t$, and $t \mapsto \mathbb{X}_{s,t}$ for fixed $s$, are both c{\`a}dl{\`a}g.

\medskip

For $p \in [2,3)$, a pair $\bX = (X,\X)$ is called a \emph{c{\`a}dl{\`a}g $p$-rough path} over $\R^d$ if
\begin{enumerate}
  \item[(i)] $X \in D^p([0,T];\R^d)$ and $\X \in D_2^{\frac{p}{2}}(\Delta_T;\R^{d \times d})$, and
  \item[(ii)] Chen's relation: $\mathbb{X}_{s,t} = \mathbb{X}_{s,u} + \mathbb{X}_{u,t} + X_{s,u} \otimes X_{u,t}$ holds for all $0 \leq s \leq u \leq t \leq T$.
\end{enumerate}
In component form, condition (ii) states that $\mathbb{X}^{ij}_{s,t} = \mathbb{X}^{ij}_{s,u} + \mathbb{X}^{ij}_{u,t} + X^i_{s,u} X^j_{u,t}$ for every $i$ and $j$. We will denote the space of c{\`a}dl{\`a}g $p$-rough paths by $\cD^p = \cD^p([0,T];\R^d)$. On the space $\cD^p([0,T];\R^d)$, we use the natural seminorm
\begin{equation*}
  \|\bX\|_{p} := \|\bX\|_{p,[0,T]} \qquad \text{with} \qquad \|\bX\|_{p,[s,t]} := \|X\|_{p,[s,t]} + \|\X\|_{\frac{p}{2},[s,t]}
\end{equation*}
for $(s,t) \in \Delta_T$, and the induced distance
\begin{equation}\label{eq:rough path distance} 
  \|\bX;\tbX\|_p :=  \|\bX;\tbX\|_{p,[0,T]} \qquad \text{with} \qquad \|\bX;\tbX\|_{p,[s,t]} := \|X - \tX\|_{p,[s,t]} + \|\X - \tbbX\|_{\frac{p}{2},[s,t]},
\end{equation}
whenever $\bX = (X,\X), \tbX = (\tX,\tbbX) \in \cD^p([0,T];\R^d)$.

\medskip

Let $p \in [2,3)$, $q \in [p,\infty)$ and $r \in [\frac{p}{2},2)$ such that $\frac{1}{p} + \frac{1}{r} > 1$ and $\frac{1}{p} + \frac{1}{q} = \frac{1}{r}$. Let $X \in D^p([0,T];\R^d)$. We say that a pair $(Y,Y')$ is a \emph{controlled path} (with respect to $X$), if
\begin{equation*}
  Y \in D^p([0,T];E), \quad Y' \in D^q([0,T];\cL(\R^d;E)), \quad \text{and} \quad R^Y \in D^r_2(\Delta_T;E),
\end{equation*}
where $R^Y$ is defined by
\begin{equation*}
  Y_{s,t} = Y'_s X_{s,t} + R^Y_{s,t} \qquad \text{for all} \quad (s,t) \in \Delta_T.
\end{equation*}
We write $\cV^{q,r}_X = \cV^{q,r}_X([0,T];E)$ for the space of $E$-valued controlled paths, which becomes a Banach space when equipped with the norm
\begin{equation*}
  (Y,Y') \mapsto |Y_0| + |Y'_0| + \|Y'\|_{q,[0,T]} + \|R^Y\|_{r,[0,T]}.
\end{equation*}

\begin{remark}
The definition of a controlled path adopted here is slightly more general than the classical definition in, e.g., \cite{Friz2018}, in which one takes $q = p$ and $r = \frac{p}{2}$. Allowing these regularity parameters to take larger values allows us to consider slightly more general integrands in rough integrals. In particular, this is convenient in Theorem~\ref{thm: RDE} below, as otherwise we would require further restrictions on the regularity of the paths $A$ and $H$ therein.
\end{remark}

For paths $A \in D^{q_1}$, $H \in D^{q_2}$ for $q_1, q_2 \in [1,2)$, and a rough path $\bX \in \cD^p$ for $p \in [2,3)$, we consider the rough differential equation (RDE):
\begin{equation}\label{eq:general RDE}
  Y_t = y_0 + \int_0^t b(H_s,Y_s) \dd A_s + \int_0^t \sigma(H_s,Y_s) \dd \bX_s, \qquad t \in [0,T].
\end{equation}
Provided that $\frac{1}{p} + \frac{1}{q_1} > 1$ and $\frac{1}{p} + \frac{1}{q_2} > 1$, the first integral in this equation can be defined as a Young integral, whilst the second integral is defined as a rough integral. For precise definitions, constructions and properties of these integrals, we refer to the comprehensive exposition in \cite{Friz2018}.

\begin{theorem}\label{thm: RDE}
Let $p \in [2,3)$ and $q_1, q_2 \in [1,2)$ such that $\frac{1}{p} + \frac{1}{q_1} > 1$ and $\frac{1}{p} + \frac{1}{q_2} > 1$. Let $b \in C^2_b(\R^{m+k};\cL(\R^n;\R^k))$, $\sigma \in C^3_b(\R^{m+k};\cL(\R^d;\R^k))$, $y_0 \in \R^k$, $A \in D^{q_1}([0,T];\R^n)$, $H \in D^{q_2}([0,T];\R^m)$ and $\bX = (X,\X) \in \cD^p([0,T];\R^d)$. Let $r \in [\frac{p}{2} \vee q_1 \vee q_2,2)$ such that $\frac{1}{p} + \frac{1}{r} > 1$, and let $q \in [p,\infty)$ such that $\frac{1}{p} + \frac{1}{q} = \frac{1}{r}$. Then there exists a unique path $Y \in D^p([0,T];\R^k)$ such that the controlled path $(Y,\sigma(H,Y)) \in \cV_X^{q,r}$ satisfies the RDE~\eqref{eq:general RDE}.

  Moreover, if $\ty_0 \in \R^k$, $\tA \in D^{q_1}$, $\tH \in D^{q_2}$ and $\tbX = (\tX,\tbbX) \in \cD^p$ with corresponding solution $\tY$, and if $\|A\|_{r}, \|\tA\|_{r}, \|H\|_{r}, \|\tH\|_{r}, \|\bX\|_{p}, \|\tbX\|_{p} \leq L$ for some $L > 0$, then
  \begin{equation}\label{eq:estimate for RDE}
    \begin{split}
    &\|Y - \tY\|_p + \|Y' - \tY'\|_q + \|R^Y - R^{\tY}\|_r\\
    &\quad\lesssim |y_0 - \ty_0| + |H_0 - \tH_0| + \|H - \tH\|_r + \|A - \tA\|_r + \|\bX;\tbX\|_p,
    \end{split}
  \end{equation}
  where the implicit multiplicative constant depends only on $p, q, r, \|b\|_{C_b^2}, \|\sigma\|_{C_b^3}$ and $L$.
\end{theorem}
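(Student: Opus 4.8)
The plan is to recast \eqref{eq:general RDE} as a fixed point problem in the Banach space of controlled paths $\cV_X^{q,r}([0,T];\R^k)$, following the by now standard strategy for rough differential equations (cf.\ \cite{Friz2018}), with the extra bookkeeping forced by the Young-type drift $\int_0^\cdot b(H_s,Y_s)\dd A_s$ and by the coefficients depending on the auxiliary path $H$. The starting observation is that, for any $(Y,Y')\in\cV_X^{q,r}$, the pair $\big(\sigma(H,Y),\,\D_y\sigma(H,Y)\,Y'\big)$ is again a controlled path: a Taylor expansion of $\sigma(H_t,Y_t)-\sigma(H_s,Y_s)$ produces the leading term $\D_y\sigma(H_s,Y_s)Y_{s,t}=\D_y\sigma(H_s,Y_s)Y'_s X_{s,t}+\D_y\sigma(H_s,Y_s)R^Y_{s,t}$, and the residual contributions --- namely $\D_y\sigma(H_s,Y_s)R^Y_{s,t}$, the increment $\D_h\sigma(H_s,Y_s)H_{s,t}$, and the second-order Taylor terms --- all have finite $r$-variation, since $r\geq\tfrac{p}{2}\vee q_2$; they are thus absorbed into the remainder $R^{\sigma(H,Y)}\in D_2^r$. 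Consequently $\int_0^\cdot\sigma(H_s,Y_s)\dd\bX_s$ is a well-defined rough integral (as $\tfrac1p+\tfrac1r>1$ and $\tfrac1p+\tfrac1q=\tfrac1r$), and $\int_0^\cdot b(H_s,Y_s)\dd A_s$ is a well-defined Young integral (as $b(H,Y)\in D^p$ and $\tfrac1p+\tfrac1{q_1}>1$; the constraints in the statement also guarantee that such an $r$ exists). One then defines the solution map $\cM\colon(Y,Y')\mapsto(Z,Z')$ with $Z_t:=y_0+\int_0^t b(H_s,Y_s)\dd A_s+\int_0^t\sigma(H_s,Y_s)\dd\bX_s$ and $Z'_t:=\sigma(H_t,Y_t)$, so that $(Y,\sigma(H,Y))$ solves \eqref{eq:general RDE} if and only if it is a fixed point of $\cM$.

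First I would establish local well-posedness. Using the Lipschitz estimates for the rough integral (via the sewing lemma / rough integration theory of \cite{Friz2018}) and for the Young integral, together with the stability of the nonlinear compositions $Y\mapsto b(H,Y)$ and $Y\mapsto\sigma(H,Y)$ under the assumed $C_b^2$- resp.\ $C_b^3$-regularity, one checks that on a short enough interval $[0,\tau]$ the map $\cM$ maps a suitable closed ball of $\cV_X^{q,r}([0,\tau];\R^k)$ into itself and is a contraction there. The crucial point is that, because $b,\sigma$ and their derivatives are bounded, $\tau$ may be chosen to depend only on $p,q,r$ and on the common regularity bound $L$ for $\|A\|_r,\|H\|_r,\|\bX\|_p$ --- and \emph{not} on $y_0$ or $H_0$. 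Banach's fixed point theorem then gives a unique controlled solution on $[0,\tau]$; restarting from $\big(Y_\tau,\sigma(H_\tau,Y_\tau)\big)$ and iterating covers $[0,T]$ in a number of steps bounded in terms of $L$ (more precisely, one partitions $[0,T]$ greedily along a control built from $\|A\|_r^r+\|H\|_r^r+\|\bX\|_p^p$, so that the number of intervals is controlled by $L$), and the pieces concatenate to the unique global solution $Y\in D^p([0,T];\R^k)$ with $(Y,\sigma(H,Y))\in\cV_X^{q,r}$.

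For the stability estimate \eqref{eq:estimate for RDE}, I would apply the same estimates to the difference of the two solutions. Writing down the equation satisfied by $Y-\tY$ and invoking the local Lipschitz bounds for the Young and rough integrals in their ``two-signal'' form --- comparing the controlled path $(\sigma(H,Y),\D_y\sigma(H,Y)\sigma(H,Y))$ with $(\sigma(\tH,\tY),\D_y\sigma(\tH,\tY)\sigma(\tH,\tY))$ over $\bX$ versus $\tbX$, and similarly for $b(H,Y)$ over $A$ versus $\tA$ --- yields, on each interval $[s,t]$ of the partition above, an estimate of the schematic form
\begin{align*}
  &\|Y-\tY\|_{p,[s,t]}+\|Y'-\tY'\|_{q,[s,t]}+\|R^Y-R^{\tY}\|_{r,[s,t]}\\
  &\qquad\leq C\,\delta_{s,t}+C\,\omega(s,t)^{\theta}\Big(|Y_s-\tY_s|+|Y'_s-\tY'_s|+\|Y-\tY\|_{p,[s,t]}+\|Y'-\tY'\|_{q,[s,t]}+\|R^Y-R^{\tY}\|_{r,[s,t]}\Big),
\end{align*}
where $\omega$ is a control built from the $r$- and $p$-variations of $A,\tA,H,\tH,\bX,\tbX$, $\theta>0$, and $\delta_{s,t}:=|y_0-\ty_0|+|H_0-\tH_0|+\|H-\tH\|_{r,[s,t]}+\|A-\tA\|_{r,[s,t]}+\|\bX;\tbX\|_{p,[s,t]}$. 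Refining the partition so that $\omega$ increases by at most a fixed small amount on each interval absorbs the last three terms on the right into the left-hand side, and a discrete Gronwall-type iteration over the finitely many intervals --- keeping track of how the endpoint quantities $|Y_s-\tY_s|$ and $|Y'_s-\tY'_s|$ propagate --- gives \eqref{eq:estimate for RDE}, with an overall constant depending only on $p,q,r,\|b\|_{C_b^2},\|\sigma\|_{C_b^3}$ and $L$.

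The main obstacle, as is typical for RDE estimates, is purely quantitative: keeping every constant uniform in the stated data. This requires (i) controlling the number of patching/partition intervals solely through $L$, which is where superadditivity of the relevant controls and a greedy (Cass--Litterer--Lyons-type) partitioning argument enter; (ii) handling the three coupled regularity exponents $p\le q$ and $\tfrac p2\le r<p$ simultaneously with the two Young-type terms and the $H$-dependence of the coefficients, so that the composition and integration estimates close at the right scale; and (iii) checking that the contraction constant can be made $\le\tfrac12$ on the short intervals uniformly in the initial data. None of these steps is conceptually new --- the result is essentially that of \cite{Friz2018}, extended to the generalized controlled-path spaces $\cV_X^{q,r}$ and to the additional drift and parameter path $A,H$ --- but the combination demands careful and somewhat lengthy bookkeeping.
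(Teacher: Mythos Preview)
Your proposal is correct and follows essentially the same approach as the paper's proof: a Banach fixed point argument for the map $\cM$ on a ball in $\cV_X^{q,r}$ over short intervals (with the interval length controlled by the data through a control function built from $\|A\|_r^r+\|H\|_r^r+\|\bX\|_p^p$), followed by a greedy partitioning of $[0,T]$ into a number of subintervals bounded in terms of $L$, and a local-to-global patching of both the solution and the Lipschitz estimate. The only point worth flagging in the c{\`a}dl{\`a}g setting is that the partitioning should be done on half-open intervals $[t_i,t_{i+1})$ (using right-continuity of the control and taking $w(t_i,t_{i+1}-)$ small), with the endpoint values $Y_{t_{i+1}}$ determined separately by the jumps of $A$ and $\bX$; this is implicit in your ``careful bookkeeping'' caveat but is the one place where the argument departs from the continuous case.
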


The result of Theorem~\ref{thm: RDE} may be considered classical, and will be unsurprising to readers familiar with RDEs. However, to the best of our knowledge, a proof of the precise statement of the theorem does not appear in the existing literature. A sketch of the proof, based on the proof of \cite[Theorem~2.3]{Allan2021a}, is therefore given in Appendix~\ref{appendix proof of RDE thm}.

\subsection{Convergence of the Euler scheme}

Let us consider the RDE
\begin{equation}\label{eq:RDE Y}
  Y_t = y_0 + \int_0^t b(s,Y_s) \dd s + \int_0^t \sigma(s,Y_s) \dd \bX_s, \qquad t \in [0,T],
\end{equation}
where $y_0 \in \R^k$, $b \in C_b^2(\R^{k+1};\R^k)$, $\sigma \in C^3_b(\R^{k+1};\mathcal{L}(\R^d;\R^k))$ and $\bX = (X,\X) \in \cD^p([0,T];\R^d)$ is the driving c{\`a}dl{\`a}g $p$-rough path for $p \in [2,3)$. Given a sequence of partitions $\cP^n = \{0 = t^n_0 < t^n_1 < \cdots < t^n_{N_n} = T\}$, $n \in \N$, the Euler approximation $Y^n$ corresponding to the RDE \eqref{eq:RDE Y} along the partition $\cP^n$ is given by
\begin{equation}\label{eq:RDE Euler scheme}
  Y^n_t = y_0 + \sum_{i \hspace{1pt} : \hspace{1pt} t^n_{i+1} \leq t} b(t^n_i,Y^n_{t^n_i}) (t^n_{i+1} - t^n_i) + \sum_{i \hspace{1pt} : \hspace{1pt} t^n_{i+1} \leq t} \sigma(t^n_i,Y^n_{t^n_i}) (X_{t^n_{i+1}} - X_{t^n_i}),
\end{equation}
for $t \in [0,T]$.

\medskip

It is a classical result in the numerical analysis of SDEs that, if the driving signal is, e.g., a Brownian motion, then the Euler scheme (often also called the Euler--Maruyama scheme) converges pathwise; see, e.g., \cite{Kloeden2007}. On the other hand, it is known that in general the Euler scheme cannot converge if the driving signal is an arbitrary rough path, since the corresponding Euler scheme for SDEs driven by fractional Brownian motion fails to converge; see \cite{Deya2012a} for a more detailed discussion on this observation.

Moreover, since the extension of a path $X$ to a rough path $\bX = (X,\mathbb{X})$ is not unique, and the Euler approximation $Y^n$ defined in \eqref{eq:RDE Euler scheme} is independent of $\mathbb{X}$, the sequence $(Y^n)_{n \in \N}$ cannot converge to the solution of a general RDE. Thus, in order to ensure the convergence of the Euler scheme, it is necessary to identify the ``correct'' rough path lift $\bX$ as the driving signal for the RDE~\eqref{eq:RDE Y}. A suitable resolution to this is provided by the so-called Property \textup{(RIE)}, as introduced in \cite{Perkowski2016} and \cite{Allan2023b}.

\begin{property}[\textbf{RIE}]
  Let $p \in (2,3)$ and let $\cP^n = \{0 = t^n_0 < t^n_1 < \cdots < t^n_{N_n} = T\}$, $n \in \N$, be a sequence of partitions of the interval $[0,T]$ such that $|\mathcal{P}^n| \to 0$ as $n \to \infty$. For $X \in D([0,T];\R^d)$, and each $n \in \N$, we define $X^n \colon [0,T] \to \R^d$ by
  \begin{equation*}
    X^n_t = X_T \1_{\{T\}}(t) + \sum_{k=0}^{N_n - 1} X_{t^n_k} \1_{[t^n_k,t^n_{k+1})}(t), \qquad t \in [0,T].
  \end{equation*}
  We assume that:
  \begin{enumerate}
    \item[(i)] the sequence of paths $(X^n)_{n \in \N}$ converges uniformly to $X$ as $n \to \infty$,
    \item[(ii)] the Riemann sums
    \[\int_0^t X^n_u \otimes \d X_u := \sum_{k=0}^{N_n-1} X_{t^n_k} \otimes X_{t^n_k \wedge t,t^n_{k+1} \wedge t}\]
    converge uniformly as $n \to \infty$ to a limit, which we denote by $\int_0^t X_u \otimes \d X_u$, $t \in [0,T]$,
    \item[(iii)] and there exists a control function $w$ such that
    \begin{equation}\label{eq:RIE inequality}
      \sup_{(s,t) \in \Delta_T} \frac{|X_{s,t}|^p}{w(s,t)} + \sup_{n \in \N} \, \sup_{0 \leq k < \ell \leq N_n} \frac{|\int_{t^n_k}^{t^n_\ell} X^n_u \otimes \d X_u - X_{t^n_k} \otimes X_{t^n_k,t^n_\ell}|^{\frac{p}{2}}}{w(t^n_k,t^n_\ell)} \leq 1.
    \end{equation}
  \end{enumerate}
\end{property}

We say that a path $X \in D([0,T];\R^d)$ satisfies Property \textup{(RIE)} relative to $p$ and $(\mathcal{P}^n)_{n \in \N}$, if $p$, $(\mathcal{P}^n)_{n \in \N}$ and $X$ together satisfy Property \textup{(RIE)}.

\medskip

It is known that, if a path $X \in D([0,T];\R^d)$ satisfies Property \textup{(RIE)}, then $X$ extends canonically to a rough path $\mathbf{X} = (X,\mathbb{X}) \in \mathcal{D}^p([0,T];\R^d)$, where the lift $\X$ is defined by
\begin{equation}\label{eq:RIE rough path}
  \mathbb{X}_{s,t} := \int_s^t X_u \otimes \d X_u - X_s \otimes (X_t - X_s), \qquad (s,t) \in \Delta_T,
\end{equation}
with $\int_s^t X_u \otimes \d X_u := \int_0^t X_u \otimes \d X_u - \int_0^s X_u \otimes \d X_u$, and the existence of the integral $\int_0^t X_u \otimes \d X_u$ is ensured by condition (ii) of Property \textup{(RIE)}; see \cite[Lemma~2.13]{Allan2023b}. When assuming Property \textup{(RIE)} for a path $X$, we will always work with the rough path $\mathbf{X} = (X,\mathbb{X})$ defined via \eqref{eq:RIE rough path}, and note that $\mathbf{X} = (X,\mathbb{X})$ corresponds to the It{\^o} rough path lift of a stochastic process, since the ``iterated integral'' $\mathbb{X}$ is given as a limit of left-point Riemann sums, analogously to the stochastic It{\^o} integral.

\medskip

Postulating Property \textup{(RIE)} for the driving signal of an RDE ensures that the (first order) Euler approximation converges to the solution of the equation, as stated precisely in the next theorem.

\begin{theorem}\label{thm: Euler scheme convergence}
  Suppose that $X \colon [0,T] \to \R^d$ satisfies Property \textup{(RIE)} relative to some $p \in (2,3)$ and a sequence of partitions $(\cP^n)_{n \in \N}$, and let $\bX$ be the canonical rough path lift of $X$, as defined in \eqref{eq:RIE rough path}. Let $Y$ be the solution to the RDE \eqref{eq:RDE Y} driven by $\bX$, and let $Y^n$ be the Euler approximation defined in \eqref{eq:RDE Euler scheme}. Then,
  \begin{equation*}
    \|Y^n - Y\|_{p'} \, \longrightarrow \, 0 \qquad \text{as} \quad n \, \longrightarrow \, \infty,
  \end{equation*}
  for any $p' \in (p,3)$, and the rate of convergence is determined by the estimate
  \begin{equation}\label{eq:est rate of convergence}
    \|Y^n - Y\|_{p'} \lesssim |\cP^n|^{1 - \frac{1}{q}} + \|X^n - X\|_{\infty}^{1 - \frac{p}{p'}} + \bigg\|\int_0^{\cdot} X^n_{u} \otimes \d X_u - \int_0^{\cdot} X_{u} \otimes \d X_u\bigg\|_{\infty}^{1 - \frac{p}{p'}},
  \end{equation}
 which holds for any $q \in (1,2)$ such that $\frac{1}{p'} + \frac{1}{q} > 1$, where the implicit multiplicative constant depends only on $p, p', q, \|b\|_{C_b^2}, \|\sigma\|_{C_b^3}, T, |X_0|$ and $w(0,T)$, where $w$ is the control function for which \eqref{eq:RIE inequality} holds.
\end{theorem}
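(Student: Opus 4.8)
The plan is to realise the Euler approximation $Y^n$ itself as the exact solution of a rough differential equation driven by a suitably discretised rough path, and then to feed this into the stability estimate \eqref{eq:estimate for RDE} of Theorem~\ref{thm: RDE}. Let $X^n$ be the step path from Property~\textup{(RIE)}, let $A^n$ be the analogous piecewise constant approximation of the identity path (so $A^n_t = t^n_k$ for $t \in [t^n_k,t^n_{k+1})$ and $A^n_T = T$), and define the two-parameter function $\X^n$ by $\X^n_{t^n_k,t^n_\ell} := \int_{t^n_k}^{t^n_\ell} X^n_u \otimes \dd X_u - X_{t^n_k}\otimes X_{t^n_k,t^n_\ell}$ at pairs of partition points, extended to all of $\Delta_T$ via Chen's relation together with the prescription that $\X^n_{s,t} = 0$ whenever $s,t$ lie in a common interval $[t^n_k,t^n_{k+1}]$. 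First I would check that $\bX^n := (X^n,\X^n) \in \cD^p$ and that the controlled path $(Y^n,\sigma(A^n,Y^n))$ solves, in the sense of \eqref{eq:general RDE}, the RDE
\[
  Y^n_t = y_0 + \int_0^t b(A^n_s,Y^n_s)\dd A^n_s + \int_0^t \sigma(A^n_s,Y^n_s)\dd\bX^n_s .
\]
The mechanism is that $X^n$, $A^n$ and $Y^n$ are constant on each $[t^n_k,t^n_{k+1})$ while $\X^n$ vanishes on the corresponding diagonal blocks, so that the Young and rough integrals against $(A^n,A^n,\bX^n)$ collapse: over $[t^n_k,t^n_{k+1}]$ the second-order compensator terms telescope to zero, leaving exactly the increments $b(t^n_k,Y^n_{t^n_k})(t^n_{k+1}-t^n_k)$ and $\sigma(t^n_k,Y^n_{t^n_k})X_{t^n_k,t^n_{k+1}}$ of \eqref{eq:RDE Euler scheme}. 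Uniqueness in Theorem~\ref{thm: RDE} then identifies $Y^n$ as \emph{the} solution of this RDE.

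Next I would extract the uniform bounds needed in \eqref{eq:estimate for RDE}. Using the control $w$ and the inequality \eqref{eq:RIE inequality}: the $p$-variation of $X^n$ is controlled by $w(0,T)^{1/p}$; the $\tfrac{p}{2}$-variation of $\X^n$ over grid pairs is bounded directly by the second term of \eqref{eq:RIE inequality}; and the off-grid values of $\X^n$ are reduced via Chen's relation to grid values and increments of $X$ (controlled by $|X_0|$ and $w(0,T)^{1/p}$). Thus $\|\bX^n\|_p \le L$ uniformly in $n$, with $L$ depending only on $|X_0|$, $w(0,T)$ and $T$; the same bound holds for $\bX = (X,\X)$, cf.\ \cite{Allan2023b}. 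Moreover $\|\mathrm{id}\|_r, \|A^n\|_r \le L$, and since $A^n - \mathrm{id}$ has total variation $\lesssim T$ with all increments bounded by $|\cP^n|$, one has $\|A^n - \mathrm{id}\|_r \lesssim |\cP^n|^{1-1/r}$ for every $r \in (1,2)$.

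Since finite $p$-variation implies finite $p'$-variation for $p' \in (p,3)$, we have $\bX, \bX^n \in \cD^{p'}$ with $\|\bX\|_{p'},\|\bX^n\|_{p'} \le L$, and the RDE solutions are unchanged on raising the regularity index. I would then apply Theorem~\ref{thm: RDE} at level $p'$, with the finite-variation paths $A^n,\mathrm{id}$ (of $1$-variation) and the internal regularity parameter taken equal to the $q$ of the present statement (after reducing to the case $q \geq \tfrac{p'}{2}$, using that $|\cP^n|^{1-1/r}$ is increasing in $r$ once $|\cP^n| \le 1$), comparing the data $(y_0,\mathrm{id},\mathrm{id},\bX)$ with $(y_0,A^n,A^n,\bX^n)$. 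This gives $\|Y^n - Y\|_{p'} \lesssim \|A^n - \mathrm{id}\|_q + \|X^n - X\|_{p'} + \|\X^n - \X\|_{p'/2}$, with constant depending only on $p,p',q,\|b\|_{C^2_b},\|\sigma\|_{C^3_b}$ and $L$. For the last two terms I would use the interpolation inequality $\|Z\|_{p'} \lesssim \|Z\|_\infty^{1-p/p'}\|Z\|_p^{p/p'}$ (and its two-parameter analogue) with the uniform bounds $\|X^n - X\|_p,\|\X^n - \X\|_{p/2} \lesssim L$ from the previous step, so that $\|X^n - X\|_{p'} \lesssim \|X^n - X\|_\infty^{1-p/p'}$ and $\|\X^n - \X\|_{p'/2} \lesssim \|\X^n - \X\|_\infty^{1-p/p'}$. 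Comparing the formulas \eqref{eq:RIE rough path} for $\X$ and $\X^n$ on the grid and extending by Chen gives $\sup_{(s,t)\in\Delta_T}|\X^n_{s,t} - \X_{s,t}| \lesssim \|X^n - X\|_\infty + \|\int_0^\cdot X^n_u \otimes \dd X_u - \int_0^\cdot X_u \otimes \dd X_u\|_\infty$; combining with $(a+b)^\theta \le a^\theta + b^\theta$ for $\theta = 1-p/p' \in (0,1)$ yields \eqref{eq:est rate of convergence}. Convergence $\|Y^n - Y\|_{p'} \to 0$ is then immediate, since $|\cP^n| \to 0$ and the two supremum norms vanish as $n \to \infty$ by parts (i) and (ii) of Property~\textup{(RIE)}.

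I expect the main difficulty to be in the first step: correctly identifying the discretised lift $\X^n$ and proving rigorously that the rough integral $\int_0^\cdot \sigma(A^n,Y^n)\dd\bX^n$ reproduces the Euler sums exactly---i.e.\ that all second-order compensator contributions cancel cell by cell---together with the verification that $\X^n$ inherits the $\tfrac{p}{2}$-variation control of \eqref{eq:RIE inequality} at points outside the partition. The remaining steps amount to bookkeeping with variation norms and the interpolation inequality.
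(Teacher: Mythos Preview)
Your proposal is correct and follows essentially the same architecture as the paper: recast the Euler scheme as the solution of an RDE driven by a discretised rough path $\bX^n=(X^n,\X^n)$ and a discretised time path, and then invoke the stability estimate of Theorem~\ref{thm: RDE} together with interpolation to obtain~\eqref{eq:est rate of convergence}. The supporting estimates you outline (uniform $p$-variation bound on $X^n$, uniform $\tfrac{p}{2}$-variation bound on $\X^n$ via \eqref{eq:RIE inequality} and Chen's relation, the bound $\sup_{s,t}|\X^n_{s,t}-\X_{s,t}| \lesssim \|X^n-X\|_\infty + \|\int_0^\cdot X^n\otimes\d X - \int_0^\cdot X\otimes\d X\|_\infty$, and $\|A^n-\mathrm{id}\|_q \lesssim |\cP^n|^{1-1/q}$) match the content of the paper's Lemmas~\ref{lemma: bbX^n to bbX uniformly}--\ref{lem:rough path approximation} and Proposition~\ref{prop: discretized RDE}.

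The one genuine difference is in how you identify $Y^n$ with the RDE solution. You argue directly that, because $X^n,A^n,Y^n$ are piecewise constant and $\X^n$ vanishes on diagonal blocks, the rough integral collapses cell by cell to the Euler increments. This is correct, but the paper instead defines $\X^n_{s,t}:=\int_s^t X^n_{s,u}\otimes\d X^n_u$ as a Young integral (which at grid pairs coincides with your formula), proves that $X^n$ itself satisfies Property~\textup{(RIE)} relative to any refinement of $\cP^n$ (Lemma~\ref{lemma: X^n satisfies RIE}), and then invokes Theorem~\ref{thm:rough int under RIE} to conclude that the rough integral against $\bX^n$ equals a limit of left-point Riemann sums, hence equals the Euler sum. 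Your direct computation is shorter and more elementary; the paper's route is more systematic and yields Lemma~\ref{lemma: X^n satisfies RIE} as an independent result of interest. Either way, the step you flag as the ``main difficulty'' is indeed the place where care is needed, and your sketch of it is sound.
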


Note that Property \textup{(RIE)} implies that each of the terms on the right-hand side of \eqref{eq:est rate of convergence} tends to zero as $n \to \infty$.

\begin{remark}\label{rem: Davie's criterion}
  In \cite{Davie2008}, A.~M.~Davie observed that, under suitable conditions, the first order Euler scheme along equidistant partitions converges to the solution of a given RDE. More precisely, for $p \in (2,3)$ and $\alpha := \frac{1}{p}$, let $\bX = (X,\X)$ be an $\alpha$-H{\"o}lder continuous rough path, so that $|X_{s,t}| \lesssim |t-s|^\alpha$ and $|\X_{s,t}| \lesssim |t-s|^{2\alpha}$ for $(s,t) \in \Delta_T$, such that, for some $\beta \in (1 - \alpha,2\alpha)$, there exists a constant $C > 0$ such that
  \begin{equation*}
    \bigg|\sum_{j=k}^{\ell-1} \X_{jh,(j+1)h}\bigg| \leq C (\ell - k)^\beta h^{2\alpha}
  \end{equation*}
  whenever $h > 0$ and $0 \leq k < \ell$ are integers such that $\ell h \leq T$. Under this condition on the driving signal $\bX$, \cite[Theorem~7.1]{Davie2008} states that the Euler approximations $Y^n$, as defined in \eqref{eq:RDE Euler scheme}, converge uniformly to the solution $Y$ of the RDE \eqref{eq:RDE Y} along the equidistant partitions $(\cP^n_{\text{U}})_{n \in \N}$, where $\cP^n_U = \{\frac{iT}{n} : i = 0, 1, \ldots, n\}$. Note that Davie's condition implies Property \textup{(RIE)}---see \cite[Appendix~B]{Perkowski2016}---and is thus less general, even in the case of H{\"o}lder continuous rough paths.
\end{remark}

\begin{remark}
  Since the ``iterated integrals'' appearing in the definition of a rough path (and in, e.g., higher order Euler schemes) are often numerically difficult to simulate, various approaches have been developed to avoid the direct involvement of iterated integrals in the approximation of stochastic and rough differential equations. For instance, \cite{Deya2012a} introduced a simplified Milstein scheme for SDEs driven by fractional Brownian motion, where the iterated integrals are replaced by products of the increments of the driving process. Using this idea, simplified Runge--Kutta methods for differential equations driven by general (continuous) rough paths were investigated in \cite{Redmann2022}; see also \cite{Hong2018}.
\end{remark}

The rest of this subsection is devoted to the proof of Theorem~\ref{thm: Euler scheme convergence}, which first requires us to establish some auxiliary results.

In the following, we will always assume that $X \colon [0,T] \to \R^d$ satisfies Property \textup{(RIE)} relative to some $p \in (2,3)$ and a sequence of partitions $(\cP^n)_{n\in \N}$. As the piecewise constant approximation $X^n$ (as defined in Property \textup{(RIE)}) has finite $1$-variation, it possesses a canonical rough path lift $\bX^n = (X^n,\X^n) \in \mathcal{D}^p([0,T];\R^d)$, with $\X^n$ given by
\begin{equation}\label{eq:defn bbX^n}
  \X^n_{s,t} := \int_s^t X^n_{s,u} \otimes \d X^n_u, \qquad (s,t) \in \Delta_T,
\end{equation}
where the integral is defined as a classical limit of left-point Riemann sums. Note that, while \cite[Section~5.3]{Friz2018} discretizes the rough path $\mathbf{X} = (X,\mathbb{X})$ in a piecewise constant manner, here we instead discretize the path $X$ and then extend it to a rough path $\bX^n = (X^n,\X^n)$ via \eqref{eq:defn bbX^n}.

As a first step towards the proof of Theorem~\ref{thm: Euler scheme convergence}, we establish the convergence of the rough paths $(\bX^n)_{n\in\N}$ to the rough path $\bX$ in a suitable rough path distance. For this purpose, we need two auxiliary lemmas.

\begin{lemma}\label{lemma: bbX^n to bbX uniformly}
  Suppose that $X \colon [0,T] \to \R^d$ satisfies Property \textup{(RIE)} relative to some $p \in (2,3)$ and a sequence of partitions $(\cP^n)_{n\in \N}$. Then, we have the estimate
  \begin{equation*}
    \sup_{(s,t) \in \Delta_{T}} |\X^n_{s,t} - \X_{s,t}| \leq 2 \|X\|_{\infty} \|X^n - X\|_{\infty} + \sup_{(s,t) \in \Delta_{T}} \bigg|\int_s^t X^n_{s,u} \otimes \d X_u - \X_{s,t}\bigg|,
  \end{equation*}
  where $\X^n$ and $\X$ were defined in \eqref{eq:defn bbX^n} and \eqref{eq:RIE rough path}, respectively. In particular, we have that
  \begin{equation*}
    \X^n \, \longrightarrow \, \X \quad \text{uniformly as} \quad n \, \longrightarrow \, \infty.
  \end{equation*}
\end{lemma}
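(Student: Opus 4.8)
The plan is to split $\X^n_{s,t}-\X_{s,t}$ so that the ``change of integrator'' contribution is isolated and seen to collapse to a single boundary term. Since $\X^n_{s,t}=\int_s^t X^n_{s,u}\otimes\d X^n_u$ by definition, adding and subtracting $\int_s^t X^n_{s,u}\otimes\d X_u$ and using linearity of the left-point Riemann integral in the integrator (all three integrals are well-defined, as $X^n_{s,\cdot}$ has finite variation) gives
\[
  \X^n_{s,t}-\X_{s,t}=\underbrace{\int_s^t X^n_{s,u}\otimes\d(X^n-X)_u}_{=:\,(\mathrm I)}+\underbrace{\Big(\int_s^t X^n_{s,u}\otimes\d X_u-\X_{s,t}\Big)}_{=:\,(\mathrm{II})}.
\]
The term $(\mathrm{II})$ is exactly the second quantity on the right-hand side of the asserted estimate, so the whole point is to bound $(\mathrm I)$ by $2\|X\|_\infty\|X^n-X\|_\infty$.

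For this I would use that $X^n_{s,\cdot}$ is right-continuous and constant on each partition interval $[t^n_k,t^n_{k+1})$, so that the left-point Riemann integral defining $(\mathrm I)$ reduces to the finite sum
\[
  (\mathrm I)=\sum_k\big(X^n_{t^n_k}-X^n_s\big)\otimes\big[(X^n-X)_{t^n_{k+1}\wedge t}-(X^n-X)_{t^n_k\vee s}\big],
\]
the sum ranging over those $k$ for which $[t^n_k,t^n_{k+1})$ meets $[s,t]$. The crucial observation is that $X^n$ and $X$ agree at every partition point, so $(X^n-X)_{t^n_k}=0$ for all $k$. Consequently each summand indexed by an interior partition interval vanishes (both of its endpoints are partition points); the summand for the interval containing $s$ vanishes because there $X^n_{t^n_k}-X^n_s=0$; and the only term that can survive is the one for the partition interval reaching up to $t$, which equals $\big(X^n_t-X^n_s\big)\otimes(X^n_t-X_t)=X^n_{s,t}\otimes(X^n_t-X_t)$ (using that $X^n$ is constant, equal to $X^n_t$, on that interval and that $X^n-X$ vanishes at its left endpoint; note this expression is automatically $0$ when $t$ is itself a partition point). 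Since every value of $X^n$ is a value of $X$, we have $\|X^n\|_\infty\le\|X\|_\infty$, whence $|(\mathrm I)|\le|X^n_{s,t}|\,|X^n_t-X_t|\le 2\|X\|_\infty\|X^n-X\|_\infty$; taking the supremum over $(s,t)\in\Delta_T$ yields the claimed inequality. This explicit evaluation of $(\mathrm I)$ — carefully handling the left-point convention at the jumps of $X^n-X$, which sit precisely at the partition points where that path is zero, and the two boundary intervals containing $s$ and $t$ — is the only genuinely delicate point; everything else is routine.

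Finally, for the ``in particular'' statement I would check that both terms on the right-hand side of the inequality vanish as $n\to\infty$. The first tends to $0$ by condition (i) of Property \textup{(RIE)}, together with $\|X\|_\infty<\infty$ (for instance $|X_{s,t}|^p\le w(s,t)\le w(0,T)$ gives $\|X\|_\infty\le|X_0|+w(0,T)^{1/p}$). For the second, pulling the constant $X^n_s$ out of the integral and recalling $\X_{s,t}=\int_s^t X_u\otimes\d X_u-X_s\otimes X_{s,t}$ gives
\[
  \int_s^t X^n_{s,u}\otimes\d X_u-\X_{s,t}=\Big(\int_s^t X^n_u\otimes\d X_u-\int_s^t X_u\otimes\d X_u\Big)-(X^n_s-X_s)\otimes X_{s,t},
\]
and writing each integral over $[s,t]$ as the difference of the corresponding integrals over $[0,t]$ and $[0,s]$ bounds the right-hand side by $2\sup_{r\in[0,T]}\big|\int_0^r X^n_u\otimes\d X_u-\int_0^r X_u\otimes\d X_u\big|+2\|X\|_\infty\|X^n-X\|_\infty$, which tends to $0$ uniformly in $(s,t)$ by conditions (ii) and (i) of Property \textup{(RIE)}. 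Hence $\sup_{(s,t)\in\Delta_T}|\X^n_{s,t}-\X_{s,t}|\to0$, i.e.\ $\X^n\to\X$ uniformly.
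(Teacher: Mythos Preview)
Your proof is correct and follows essentially the same approach as the paper: both isolate the term $\Lambda^n_{s,t}=\int_s^t X^n_{s,u}\otimes\d(X^n-X)_u$ and reduce it to the single boundary contribution $X^n_{s,t^n_{k_1}}\otimes(X^n_t-X_t)=X^n_{s,t}\otimes(X^n_t-X_t)$, after which the bound $2\|X\|_\infty\|X^n-X\|_\infty$ is immediate. The only cosmetic difference is that the paper organizes the cancellation via Chen's relation for the triplet $(X^n-X,X^n,\Lambda^n)$, whereas you evaluate the Riemann sum directly; the underlying computation and the ``in particular'' argument are the same.
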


\begin{proof}
  Since
  \begin{equation*}
    |\X^n_{s,t} - \X_{s,t}| \leq \bigg|\X^n_{s,t} - \int_s^t X^n_{s,u} \otimes \d X_u \bigg| + \bigg|\int_s^t X^n_{s,u} \otimes \d X_u - \X_{s,t}\bigg|,
  \end{equation*}
  and the limit in condition (ii) of Property \textup{(RIE)} holds uniformly, it is enough to prove that the function given by
  \begin{equation*}
    \Lambda^n_{s,t} := \X^n_{s,t} - \int_s^t X^n_{s,u} \otimes \d X_u = \int_s^t X^n_{s,u} \otimes \d (X^n - X)_u
  \end{equation*}
  satisfies
  \begin{equation}\label{eq:sup Lambda^n bound}
    \sup_{(s,t) \in \Delta_{T}} |\Lambda^n_{s,t}| \leq 2 \|X\|_{\infty} \|X^n - X\|_{\infty}.
  \end{equation}

  If $t^n_k \leq s < t \leq t^n_{k+1}$ for some $k$, then $X^n_{s,u} = X_{t^n_k,t^n_k} = 0$ for every $u \in [s,t)$, so that $\Lambda^n_{s,t} = 0$. Otherwise, let $k_0$ be the smallest $k$ such that $t^n_k \in (s,t)$, and let $k_1$ be the largest such $k$. It is straightforward to see that the triplet $(X^n - X,X^n,\Lambda^n)$ satisfies Chen's relation:
  \begin{equation*}
    \Lambda^n_{s,t} = \Lambda^n_{s,u} + \Lambda^n_{u,t} + X^n_{s,u} \otimes (X^n - X)_{u,t}
  \end{equation*}
  for all $s \leq u \leq t$, from which it follows that
  \begin{equation*}
    \Lambda^n_{s,t} = \Lambda^n_{s,t^n_{k_0}} + \Lambda^n_{t^n_{k_0},t^n_{k_1}} + \Lambda^n_{t^n_{k_1},t} + X^n_{s,t^n_{k_0}} \otimes (X^n - X)_{t^n_{k_0},t^n_{k_1}} + X^n_{s,t^n_{k_1}} \otimes (X^n - X)_{t^n_{k_1},t}.
  \end{equation*}
  As we already observed, we have that $\Lambda^n_{s,t^n_{k_0}} = \Lambda^n_{t^n_{k_1},t} = 0$. In fact, we also have that
  \begin{equation}\label{eq:Lambda^n01 = 0}
    \begin{split}
    \Lambda^n_{t^n_{k_0},t^n_{k_1}} &= \int_{t^n_{k_0}}^{t^n_{k_1}} X^n_{t^n_{k_0},u} \otimes \d (X^n - X)_u = \sum_{i=k_0}^{k_1-1} \int_{t^n_i}^{t^n_{i+1}} X^n_{t^n_{k_0},u} \otimes \d (X^n - X)_u\\
    &= \sum_{i=k_0}^{k_1-1} \int_{t^n_i}^{t^n_{i+1}} X_{t^n_{k_0},t^n_i} \otimes \d (X^n - X)_u = \sum_{i=k_0}^{k_1-1} X_{t^n_{k_0},t^n_i} \otimes (X^n - X)_{t^n_i,t^n_{i+1}} = 0.
    \end{split}
  \end{equation}
  Since $(X^n - X)_{t^n_{k_0}} = (X^n - X)_{t^n_{k_1}} = 0$, we simply obtain $\Lambda^n_{s,t} = X^n_{s,t^n_{k_1}} \otimes (X^n_t - X_t)$, from which \eqref{eq:sup Lambda^n bound} follows.
\end{proof}

\begin{lemma}\label{lemma: bbX^n p/2var estimate}
  Suppose that $X \colon [0,T] \to \R^d$ satisfies Property \textup{(RIE)} relative to some $p \in (2,3)$ and a sequence of partitions $(\cP^n)_{n\in \N}$. Let $w$ be the control function with respect to which~$X$ satisfies the inequality~\eqref{eq:RIE inequality}. Then, there exists a constant $C$, which depends only on $p$, such that
  \begin{equation}\label{eq:bbx^n p/2var bound}
    \|\X^n\|_{\frac{p}{2}} \leq C w(0,T)^{\frac{2}{p}}
  \end{equation}
  for every $n \in \N$, where $\X^n$ was defined in \eqref{eq:defn bbX^n}.
\end{lemma}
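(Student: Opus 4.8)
The plan is to control $\X^n$ on pairs of partition points first, directly from Property \textup{(RIE)}, then to propagate this bound to all of $\Delta_T$ using Chen's relation, and finally to sum over an arbitrary partition using the superadditivity of the control function $w$. For the first step, since $X^n_{t^n_k} = X_{t^n_k}$ for every $k$, linearity of the (left-point) integral gives $\int_{t^n_k}^{t^n_\ell} X^n_{t^n_k,u} \otimes \d X_u = \int_{t^n_k}^{t^n_\ell} X^n_u \otimes \d X_u - X_{t^n_k} \otimes X_{t^n_k,t^n_\ell}$, while the computation in \eqref{eq:Lambda^n01 = 0} (which is valid for any pair of partition points) shows that $\X^n_{t^n_k,t^n_\ell} = \int_{t^n_k}^{t^n_\ell} X^n_{t^n_k,u} \otimes \d X_u$. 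Hence
\[
  \X^n_{t^n_k,t^n_\ell} = \int_{t^n_k}^{t^n_\ell} X^n_u \otimes \d X_u - X_{t^n_k} \otimes X_{t^n_k,t^n_\ell}, \qquad 0 \leq k \leq \ell \leq N_n,
\]
so condition (iii) of Property \textup{(RIE)} immediately gives $|\X^n_{t^n_k,t^n_\ell}|^{\frac{p}{2}} \leq w(t^n_k,t^n_\ell)$.

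Next I would derive a pointwise bound for $\X^n_{s,t}$ at an arbitrary $(s,t) \in \Delta_T$. If $s$ and $t$ lie in a common interval $[t^n_k,t^n_{k+1})$, then $X^n$ is constant there and $\X^n_{s,t} = 0$. Otherwise, let $t^n_{k_0}$ be the smallest point of $\cP^n$ with $t^n_{k_0} > s$, and $t^n_{k_1}$ the largest point of $\cP^n$ with $t^n_{k_1} \leq t$, so that $t^n_{k_0-1} \leq s < t^n_{k_0} \leq t^n_{k_1} \leq t$. Since $X^n$ is constant on $[s,t^n_{k_0})$ and on $[t^n_{k_1},t]$, the relevant left-point Riemann sums vanish, giving $\X^n_{s,t^n_{k_0}} = \X^n_{t^n_{k_1},t} = 0$ and $X^n_{t^n_{k_1},t} = 0$, whereas $X^n_{s,t^n_{k_0}} = X_{t^n_{k_0-1},t^n_{k_0}}$ and $X^n_{t^n_{k_0},t} = X_{t^n_{k_0},t^n_{k_1}}$. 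Applying Chen's relation for $\X^n$ twice and cancelling the vanishing terms, everything collapses to
\[
  \X^n_{s,t} = \X^n_{t^n_{k_0},t^n_{k_1}} + X_{t^n_{k_0-1},t^n_{k_0}} \otimes X_{t^n_{k_0},t^n_{k_1}}.
\]
Combining the bound from the first step with $|X_{u,v}|^p \leq w(u,v)$, the elementary inequalities $ab \leq \frac{1}{2}(a^2+b^2)$ and $|x+y|^{\frac{p}{2}} \leq 2^{\frac{p}{2}-1}(|x|^{\frac{p}{2}}+|y|^{\frac{p}{2}})$, and superadditivity of $w$, this yields $|\X^n_{s,t}|^{\frac{p}{2}} \leq C_p\, w(t^n_{k_0-1},t^n_{k_1})$ for a constant $C_p$ depending only on $p$ (one may take $C_p = 3 \cdot 2^{\frac{p}{2}-2}$).

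Finally, for an arbitrary partition $\{0 = u_0 < u_1 < \cdots < u_M = T\}$ of $[0,T]$, I would discard the pairs $(u_j,u_{j+1})$ lying inside a single interval of $\cP^n$ (these contribute $0$), and for each of the remaining pairs apply the pointwise bound to get $|\X^n_{u_j,u_{j+1}}|^{\frac{p}{2}} \leq C_p\, w(a_j,b_j)$, where $a_j$ is the largest point of $\cP^n$ with $a_j \leq u_j$ and $b_j$ the largest point of $\cP^n$ with $b_j \leq u_{j+1}$. For surviving indices $j < j'$ we have $u_{j+1} \leq u_{j'}$ and hence $b_j \leq a_{j'}$, so the intervals $[a_j,b_j]$, listed in increasing order of $j$, have pairwise disjoint interiors and lie in $[0,T]$; superadditivity of $w$ then gives $\sum_j w(a_j,b_j) \leq w(0,T)$. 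Therefore $\sum_{[u,v]} |\X^n_{u,v}|^{\frac{p}{2}} \leq C_p\, w(0,T)$, and taking the supremum over all partitions yields \eqref{eq:bbx^n p/2var bound} with $C = C_p^{\frac{2}{p}}$.

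The step I expect to be the main obstacle is the bookkeeping underlying the last two paragraphs: one must identify precisely which points of $\cP^n$ bracket a general subinterval $[s,t]$, check that the cross term generated by Chen's relation only enlarges the controlling interval ``to the left'' by at most one mesh of $\cP^n$ (so that $w(a_j,b_j)$, rather than $w(u_j,u_{j+1})$, appears), and confirm that this enlargement does not break the telescoping provided by superadditivity of $w$. Everything else amounts to Chen's relation and the two elementary inequalities recorded above.
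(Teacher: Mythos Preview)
Your proof is correct and follows essentially the same strategy as the paper: bound $\X^n$ on partition points via Property \textup{(RIE)}, extend to arbitrary $(s,t)$ via Chen's relation, and sum using superadditivity of $w$. Your choice of $t^n_{k_1}$ as the largest partition point with $t^n_{k_1} \leq t$ (rather than $< t$, as in the paper) kills the extra cross term $X^n_{s,t^n_{k_1}} \otimes X^n_{t^n_{k_1},t}$ and yields the controlling interval $[a_j,b_j] = [t^n_{k_0-1},t^n_{k_1}]$; since these intervals only touch rather than overlap, you avoid the factor $3$ that the paper picks up when summing, but otherwise the arguments are identical.
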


\begin{proof}
  Let $n \in \N$, and let $(s,t) \in \Delta_{T}$. If $t^n_k \leq s < t \leq t^n_{k+1}$ for some $k$, then $X^n_{s,u} = X_{t^n_k,t^n_k} = 0$ for every $u \in [s,t)$, so that $\X^n_{s,t} = 0$. Otherwise, let $k_0$ be the smallest $k$ such that $t^n_k \in (s,t)$, and let $k_1$ be the largest such $k$. It is straightforward to see that $(X^n,\X^n)$ satisfies Chen's relation:
  \begin{equation*}
    \X^n_{s,t} = \X^n_{s,u} + \X^n_{u,t} + X^n_{s,u} \otimes X^n_{u,t}
  \end{equation*}
  for all $s \leq u \leq t$, from which it follows that
  \begin{equation*}
    \X^n_{s,t} = \X^n_{s,t^n_{k_0}} + \X^n_{t^n_{k_0},t^n_{k_1}} + \X^n_{t^n_{k_1},t} + X^n_{s,t^n_{k_0}} \otimes X^n_{t^n_{k_0},t^n_{k_1}} + X^n_{s,t^n_{k_1}} \otimes X^n_{t^n_{k_1},t}.
  \end{equation*}
  As we have already seen, we have that $\X^n_{s,t^n_{k_0}} = \X^n_{t^n_{k_1},t} = 0$. Recalling the calculation in \eqref{eq:Lambda^n01 = 0}, we note that
  \begin{equation*}
    \X^n_{t^n_{k_0},t^n_{k_1}} = \int_{t^n_{k_0}}^{t^n_{k_1}} X^n_{t^n_{k_0},u} \otimes \d X^n_u = \int_{t^n_{k_0}}^{t^n_{k_1}} X^n_{t^n_{k_0},u} \otimes \d X_u,
  \end{equation*}
  and hence, by the inequality in \eqref{eq:RIE inequality}, that
  \begin{equation*}
    |\X^n_{t^n_{k_0},t^n_{k_1}}|^{\frac{p}{2}} = \bigg|\int_{t^n_{k_0}}^{t^n_{k_1}} X^n_{t^n_{k_0},u} \otimes \d X_u\bigg|^{\frac{p}{2}} \leq w(t^n_{k_0},t^n_{k_1}) \leq w(t^n_{k_0-1},t^n_{k_1+1}).
  \end{equation*}
  We estimate the remaining terms as
  \begin{align*}
    &|X^n_{s,t^n_{k_0}} \otimes X^n_{t^n_{k_0},t^n_{k_1}}|^{\frac{p}{2}} + |X^n_{s,t^n_{k_1}} \otimes X^n_{t^n_{k_1},t}|^{\frac{p}{2}} \lesssim |X^n_{s,t^n_{k_0}}|^p + |X^n_{t^n_{k_0},t^n_{k_1}}|^p + |X^n_{s,t^n_{k_1}}|^p + |X^n_{t^n_{k_1},t}|^p\\
    &\leq |X_{t^n_{k_0-1},t^n_{k_0}}|^p + |X_{t^n_{k_0},t^n_{k_1}}|^p + |X_{t^n_{k_0-1},t^n_{k_1}}|^p + |X_{t^n_{k_1},t^n_{k_1+1}}|^p\\
    &\leq w(t^n_{k_0-1},t^n_{k_0}) + w(t^n_{k_0},t^n_{k_1}) + w(t^n_{k_0-1},t^n_{k_1}) + w(t^n_{k_1},t^n_{k_1+1})\\
    &\leq 2 w(t^n_{k_0-1},t^n_{k_1+1}).
  \end{align*}
  Putting this together, we have that
  \begin{equation*}
    |\X^n_{s,t}|^{\frac{p}{2}} \leq \widetilde{C} w(t^n_{k_0-1},t^n_{k_1+1})
  \end{equation*}
  for some constant $\widetilde{C}$. It follows that, for an arbitrary partition $\cP$ of the interval $[0,T]$, we have the bound
  \begin{equation*}
    \sum_{[s,t] \in \cP} |\X^n_{s,t}|^{\frac{p}{2}} \leq 3 \widetilde{C} w(0,T),
  \end{equation*}
  and hence that \eqref{eq:bbx^n p/2var bound} holds with $C = (3 \widetilde{C})^{\frac{2}{p}}$.
\end{proof}

Using the previous two lemmas, we can now infer the convergence of the rough paths $(\bX^n)_{n \in \N}$ to the rough path $\bX$.

\begin{lemma}\label{lem:rough path approximation}
  Suppose that $X \colon [0,T] \to \R^d$ satisfies Property \textup{(RIE)} relative to some $p \in (2,3)$ and a sequence of partitions $(\cP^n)_{n \in \N}$. Let $\bX = (X,\X)$ and $\bX^n = (X^n,\X^n)$ be the c{\`a}dl{\`a}g rough paths defined via \eqref{eq:RIE rough path} and \eqref{eq:defn bbX^n}, respectively. Then, for any $p' > p$, we have that
  \begin{equation}\label{eq:approx rough paths converge}
    \|\bX^n;\bX\|_{p'} \, \longrightarrow \, 0 \qquad \text{as} \quad n \, \longrightarrow \, \infty,
  \end{equation}
  with a rate of convergence given by
  \begin{equation}\label{eq:rate of convergence for approx rough paths}
    \|\bX^n;\bX\|_{p'} \lesssim \|X^n - X\|_{\infty}^{1 - \frac{p}{p'}} + \sup_{(s,t) \in \Delta_{T}} \bigg|\int_s^t X^n_{s,u} \otimes \d X_u - \X_{s,t}\bigg|^{1 - \frac{p}{p'}},
  \end{equation}
  where the implicit multiplicative constant depends only on $p, p', |X_0|$ and $w(0,T)$, where $w$ is the control function for which \eqref{eq:RIE inequality} holds.
\end{lemma}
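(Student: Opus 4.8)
The plan is to deduce the estimate from the two preceding lemmas by means of the elementary interpolation inequalities for $p$-variation: for a path $Z \in D^p([0,T];\R^m)$ and any $p' > p$ one has $\|Z\|_{p'} \le (2\|Z\|_\infty)^{1 - p/p'}\|Z\|_p^{p/p'}$, since $\sum_{[u,v]\in\cP}|Z_{u,v}|^{p'} \le (2\|Z\|_\infty)^{p'-p}\sum_{[u,v]\in\cP}|Z_{u,v}|^p$ for every partition $\cP$; and analogously, for a two-parameter function $\X' \in D_2^{p/2}(\Delta_T;E)$, one has $\|\X'\|_{p'/2} \le \big(\sup_{(s,t)\in\Delta_T}|\X'_{s,t}|\big)^{1-p/p'}\|\X'\|_{p/2}^{p/p'}$. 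Applying these with $Z = X^n - X$ and $\X' = \X^n - \X$, and recalling the definition \eqref{eq:rough path distance} of the rough path distance, the proof reduces to two tasks: (a) bounding $\|X^n - X\|_p$ and $\|\X^n - \X\|_{p/2}$ uniformly in $n$, and (b) controlling the two suprema $\|X^n - X\|_\infty$ and $\sup_{(s,t)\in\Delta_T}|\X^n_{s,t} - \X_{s,t}|$.

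For (a), note that \eqref{eq:RIE inequality} gives $\|X\|_p \le w(0,T)^{1/p}$ directly; and since $X^n$ is piecewise constant with jumps only at the points of $\cP^n$, any $p$-variation sum for $X^n$ reduces to a $p$-variation sum for $X$ over a sub-partition of $\cP^n$, so $\|X^n\|_p \le \|X\|_p$ and hence $\|X^n - X\|_p \le 2w(0,T)^{1/p}$. For the second level, Lemma~\ref{lemma: bbX^n p/2var estimate} already supplies $\|\X^n\|_{p/2} \le Cw(0,T)^{2/p}$ uniformly in $n$, and since $\X^n \to \X$ pointwise by Lemma~\ref{lemma: bbX^n to bbX uniformly}, lower semicontinuity of the $p/2$-variation under pointwise convergence yields $\|\X\|_{p/2} \le Cw(0,T)^{2/p}$ as well, whence $\|\X^n - \X\|_{p/2} \le 2Cw(0,T)^{2/p}$. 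For (b), Lemma~\ref{lemma: bbX^n to bbX uniformly} bounds $\sup_{(s,t)}|\X^n_{s,t} - \X_{s,t}|$ by $2\|X\|_\infty\|X^n - X\|_\infty + \sup_{(s,t)}\big|\int_s^t X^n_{s,u}\otimes\d X_u - \X_{s,t}\big|$, with $\|X\|_\infty \le |X_0| + \|X\|_p \le |X_0| + w(0,T)^{1/p}$.

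Substituting these into the two interpolation inequalities, raising to the power $\theta := 1 - p/p' \in (0,1)$ where needed, and using $(a+b)^\theta \le a^\theta + b^\theta$ to split off the two contributions, one obtains \eqref{eq:rate of convergence for approx rough paths}, with an implicit constant depending only on $p, p', |X_0|$ and $w(0,T)$; then \eqref{eq:approx rough paths converge} follows, because the first term on the right of \eqref{eq:rate of convergence for approx rough paths} tends to $0$ by condition (i) of Property \textup{(RIE)}, and so does the second, since with $\Lambda^n$ as in the proof of Lemma~\ref{lemma: bbX^n to bbX uniformly} one has $\int_s^t X^n_{s,u}\otimes\d X_u - \X_{s,t} = (\X^n_{s,t} - \X_{s,t}) - \Lambda^n_{s,t}$ with $\sup_{(s,t)}|\X^n_{s,t} - \X_{s,t}| \to 0$ and $\sup_{(s,t)}|\Lambda^n_{s,t}| \le 2\|X\|_\infty\|X^n - X\|_\infty \to 0$. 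The argument is largely routine; the only point requiring a little care is the a priori $p/2$-variation bound on $\X$, which is not immediate from \eqref{eq:RIE inequality} (that controls the relevant Riemann-sum quantity only at partition points) but is obtained above at no cost from the semicontinuity argument resting on Lemmas~\ref{lemma: bbX^n to bbX uniformly} and~\ref{lemma: bbX^n p/2var estimate}.
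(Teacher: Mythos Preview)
Your proof is correct and follows essentially the same route as the paper's: both use the interpolation inequality for $p$-variation (resp.\ $\tfrac{p}{2}$-variation) to reduce to a uniform-in-$n$ bound on $\|X^n-X\|_p$ and $\|\X^n-\X\|_{p/2}$ together with a sup-norm bound on the latter from Lemma~\ref{lemma: bbX^n to bbX uniformly}, and both obtain the $\tfrac{p}{2}$-variation bound on $\X$ via lower semicontinuity applied to Lemma~\ref{lemma: bbX^n p/2var estimate}. The only cosmetic difference is that you argue convergence of the second term in \eqref{eq:rate of convergence for approx rough paths} by decomposing through $\Lambda^n$, whereas the paper simply appeals to conditions~(i) and~(ii) of Property~\textup{(RIE)} directly.
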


\begin{proof}
  By a standard interpolation estimate (e.g., \cite[Proposition~5.5]{Friz2010}), it follows, for any $p' > p$, that
  \begin{equation*}
    \|X^n - X\|_{p'} \leq \|X^n - X\|_p^{\frac{p}{p'}} \|X^n - X\|_{\infty}^{1 - \frac{p}{p'}}.
  \end{equation*}
  We similarly have that
  \begin{equation*}
    \|\X^n - \X\|_{\frac{p'}{2}} \leq \|\X^n - \X\|_{\frac{p}{2}}^{\frac{p}{p'}} \sup_{(s,t) \in \Delta_T} |\X^n_{s,t} - \X_{s,t}|^{1 - \frac{p}{p'}}.
  \end{equation*}
  We recall from Lemma~\ref{lemma: bbX^n to bbX uniformly} that
  \begin{equation*}
    \sup_{(s,t) \in \Delta_T} |\X^n_{s,t} - \X_{s,t}| \leq 2 \|X\|_{\infty} \|X^n - X\|_{\infty} + \sup_{(s,t) \in \Delta_T} \bigg|\int_s^t X^n_{s,u} \otimes \d X_u - \X_{s,t}\bigg|.
  \end{equation*}
We have that $\sup_{n \in \N} \|X^n\|_{p} \leq \|X\|_{p}$ and $\|X\|_\infty \leq |X_0| + \|X\|_p \leq |X_0| + w(0,T)^{\frac{1}{p}}$, and, by the lower semi-continuity of the $\frac{p}{2}$-variation norm and Lemma~\ref{lemma: bbX^n p/2var estimate}, $\|\X\|_{\frac{p}{2}} \leq \liminf_{n \to \infty} \|\X^n\|_{\frac{p}{2}} \leq \sup_{n \in \N} \|\X^n\|_{\frac{p}{2}} \leq C w(0,T)^{\frac{2}{p}}$. Putting this together, we conclude that \eqref{eq:rate of convergence for approx rough paths} holds. By conditions (i) and (ii) in Property \textup{(RIE)}, the convergence in \eqref{eq:approx rough paths converge} then also follows.
\end{proof}

As a next step towards the proof of Theorem~\ref{thm: Euler scheme convergence}, we introduce a discretized version of the RDE~\eqref{eq:RDE Y}. For this purpose, we define a time discretization path along $\cP^n$ by
\begin{equation}\label{eq:time discretization operator}
  \gamma^n_t := T \1_{\{T\}}(t) + \sum_{k=0}^{N_n - 1} t^n_k \1_{[t^n_k,t^n_{k+1})}(t), \qquad t \in [0,T],
\end{equation}
and consider the RDE
\begin{equation}\label{eq:RDE tilde Y^n}
  \tY^n_t = y_0 + \int_0^t b(\gamma^n_s,\tY^n_s) \dd \gamma^n_s + \int_0^t \sigma(\gamma^n_s,\tY^n_s) \dd \bX^n_s, \qquad t \in [0,T].
\end{equation}

Thanks to Lemma~\ref{lem:rough path approximation} and the local Lipschitz continuity of the It{\^o}--Lyons map, we obtain the following proposition.

\begin{proposition}\label{prop: discretized RDE}
  Suppose that $X \colon [0,T] \to \R^d$ satisfies Property \textup{(RIE)} relative to some $p \in (2,3)$ and a sequence of partitions $(\cP^n)_{n \in \N}$. Let $Y$ be the solution of the RDE~\eqref{eq:RDE Y}, and let $\tY^n$ be the solution of the RDE~\eqref{eq:RDE tilde Y^n}. Then,
  \begin{equation}\label{eq:tY^n converge to Y}
    \|\tY^n - Y\|_{p'} \, \longrightarrow \, 0 \qquad \text{as} \quad n \, \longrightarrow \, \infty, 
  \end{equation}
  for any $p' \in (p,3)$, with a rate of convergence given by
  \begin{equation*}
    \|\tY^n - Y\|_{p'} \lesssim |\cP^n|^{1 - \frac{1}{q}} + \|X^n - X\|_{\infty}^{1 - \frac{p}{p'}} + \bigg\|\int_0^{\cdot} X^n_{u} \otimes \d X_u - \int_0^{\cdot} X_{u} \otimes \d X_u\bigg\|_{\infty}^{1 - \frac{p}{p'}},
  \end{equation*}
for any $q \in (1,2)$ such that $\frac{1}{p'} + \frac{1}{q}>1$, where the implicit multiplicative constant depends only on $p, p', q, \|b\|_{C_b^2}, \|\sigma\|_{C_b^3}, T, |X_0|$ and $w(0,T)$, where $w$ is the control function for which \eqref{eq:RIE inequality} holds.
\end{proposition}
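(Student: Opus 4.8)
\textit{Proof idea.} The plan is to realize both RDEs \eqref{eq:RDE Y} and \eqref{eq:RDE tilde Y^n} as special cases of the abstract RDE \eqref{eq:general RDE}, so that the stability estimate \eqref{eq:estimate for RDE} of Theorem~\ref{thm: RDE} applies directly; the claimed rate then drops out once the quantities on its right-hand side are estimated using Lemma~\ref{lem:rough path approximation} and the preceding lemmas. Fix $p' \in (p,3)$ and $q \in (1,2)$ with $\frac1{p'} + \frac1q > 1$, and set $r := \tfrac{p'}{2} \vee q \in (1,2)$. Writing $\mathrm{id}$ for the path $s \mapsto s$, the RDE \eqref{eq:RDE Y} is \eqref{eq:general RDE} with $n = m = 1$, driving rough path $\bX$, and $A = H = \mathrm{id}$, while \eqref{eq:RDE tilde Y^n} is \eqref{eq:general RDE} with driving rough path $\bX^n$ and $A = H = \gamma^n$. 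Since $\mathrm{id}, \gamma^n$ have finite $1$-variation and $\bX, \bX^n \in \cD^p \subseteq \cD^{p'}$, Theorem~\ref{thm: RDE} applied with $p'$ in place of $p$, with $q_1 = q_2 = q$, and with this $r$, yields unique solutions; by the inclusion of controlled-path spaces and uniqueness these coincide with $Y$ and $\tY^n$ respectively.

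I would then check that the hypotheses of Theorem~\ref{thm: RDE} hold uniformly in $n$. One has $\|\mathrm{id}\|_r, \|\gamma^n\|_r \le T$; from \eqref{eq:RIE inequality}, $\|X\|_p \le w(0,T)^{\frac1p}$, whence $\|X\|_\infty \le |X_0| + w(0,T)^{\frac1p}$; and $\sup_n \|X^n\|_p \le \|X\|_p$, so that, together with Lemma~\ref{lemma: bbX^n p/2var estimate} and the lower semicontinuity of the $\tfrac p2$-variation norm, $\|\bX\|_{p'} \le \|\bX\|_p$ and $\sup_n \|\bX^n\|_{p'} \le \sup_n \|\bX^n\|_p \lesssim w(0,T)^{\frac1p} + w(0,T)^{\frac2p}$. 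Thus all norms appearing in Theorem~\ref{thm: RDE} are bounded by a constant depending only on $p$, $|X_0|$ and $w(0,T)$. As the initial data agree ($y_0 = y_0$ and $H_0 = 0 = \tH_0$), \eqref{eq:estimate for RDE} gives
\begin{equation*}
  \|\tY^n - Y\|_{p'} \lesssim \|\mathrm{id} - \gamma^n\|_r + \|\bX^n;\bX\|_{p'}.
\end{equation*}

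It then remains to bound the two terms on the right. The path $s \mapsto s - \gamma^n_s$ is a sawtooth with supremum norm $|\cP^n|$ and total variation at most $2T$, whence $\|\mathrm{id} - \gamma^n\|_r \le 2 T^{\frac1r}|\cP^n|^{1-\frac1r} \lesssim |\cP^n|^{1-\frac1q}$, using $r \ge q$ and $|\cP^n| \le T$. For the other term, Lemma~\ref{lem:rough path approximation} bounds $\|\bX^n;\bX\|_{p'}$ by $\|X^n - X\|_\infty^{1-\frac p{p'}}$ plus the $(1-\frac p{p'})$-th power of $\sup_{(s,t)\in\Delta_T}\big|\int_s^t X^n_{s,u}\otimes\d X_u - \X_{s,t}\big|$; writing $X^n_{s,u} = X^n_u - X^n_s$ and using \eqref{eq:RIE rough path}, this two-parameter supremum equals $\sup_{(s,t)}\big|\int_s^t (X^n_u - X_u)\otimes\d X_u + (X_s - X^n_s)\otimes X_{s,t}\big|$, which by additivity of the integral and $\|X\|_\infty \le |X_0| + w(0,T)^{\frac1p}$ is controlled by a constant times
\begin{equation*}
  \bigg\|\int_0^{\cdot} X^n_u \otimes \d X_u - \int_0^{\cdot} X_u \otimes \d X_u\bigg\|_\infty + \|X^n - X\|_\infty .
\end{equation*}
Substituting and using the subadditivity of $x \mapsto x^{1-p/p'}$ produces exactly the stated estimate, and the convergence $\|\tY^n - Y\|_{p'} \to 0$ follows since $|\cP^n| \to 0$ while conditions (i) and (ii) of Property \textup{(RIE)} force $\|X^n - X\|_\infty$ and $\big\|\int_0^{\cdot} X^n_u\otimes\d X_u - \int_0^{\cdot} X_u\otimes\d X_u\big\|_\infty$ to vanish.

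The step I expect to require the most care is the reduction of the two-parameter quantity coming from Lemma~\ref{lem:rough path approximation} to the uniform norms of the one-parameter Riemann integrals appearing in \eqref{eq:est rate of convergence}, together with the bookkeeping of which constants depend on which parameters; the transition from the $p$- to the $p'$-framework and the $n$-uniformity of the bound $L$ are routine given Lemmas~\ref{lemma: bbX^n to bbX uniformly}--\ref{lem:rough path approximation}.
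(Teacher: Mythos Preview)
Your proof is correct and follows essentially the same approach as the paper: both rewrite \eqref{eq:RDE Y} and \eqref{eq:RDE tilde Y^n} as instances of the abstract RDE \eqref{eq:general RDE} with $A = H = \mathrm{id}$ (resp.\ $\gamma^n$), apply the stability estimate of Theorem~\ref{thm: RDE} at level $p'$, bound $\|\mathrm{id} - \gamma^n\|$ by interpolation between $1$-variation and sup-norm, and invoke Lemma~\ref{lem:rough path approximation} for $\|\bX^n;\bX\|_{p'}$. You are in fact more explicit than the paper about the choice of $r$, the $n$-uniform bound $L$, and the reduction of the two-parameter quantity in \eqref{eq:rate of convergence for approx rough paths} to the one-parameter uniform norm appearing in the statement.
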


\begin{proof}
  Setting $\gamma_t := t$ for $t \in [0,T]$, the RDE~\eqref{eq:RDE Y} may be rewritten as
  \begin{equation*}
    Y_t = y_0 + \int_0^t b(\gamma_s,Y_s) \dd \gamma_s + \int_0^t \sigma(\gamma_s,Y_s) \dd \bX_s, \qquad t \in [0,T].
  \end{equation*}
  Hence, by Theorem~\ref{thm: RDE}, we know that
  \begin{equation}\label{eq:proof of Milstein}
    \|\tY^n - Y\|_{p'} \lesssim \|\gamma^n - \gamma\|_{q} + \|\bX^n;\bX\|_{p'}
  \end{equation}
  for any $p' \in (p,3)$ and any $q \in [1,2)$ such that $\frac{1}{p'} + \frac{1}{q} > 1$.

  Note that $\gamma^n$ and $\gamma$ have finite $1$-variation, with $\|\gamma^n\|_1 = \|\gamma\|_1 = T$, and $\|\gamma^n - \gamma\|_{1} = 2T$. Although $\gamma^n$ does not converge to $\gamma$ in $1$-variation, it is straightforward to see by interpolation that
  \begin{equation*}
    \|\gamma^n - \gamma\|_q \leq \|\gamma^n - \gamma\|_1^{\frac{1}{q}} \|\gamma^n - \gamma\|_\infty^{1 - \frac{1}{q}} = (2T)^{\frac{1}{q}} |\cP^n|^{1 - \frac{1}{q}}
  \end{equation*}
  for any $q > 1$. Combining this with the estimate in \eqref{eq:proof of Milstein} and the result of Lemma~\ref{lem:rough path approximation}, we infer the convergence in \eqref{eq:tY^n converge to Y}, and the estimate
  \begin{align*}
    \|\tY^n - Y\|_{p'} &\lesssim \|\gamma^n - \gamma\|_q + \|X^n - X\|_{\infty}^{1 - \frac{p}{p'}} + \sup_{(s,t) \in \Delta_T} \bigg|\int_s^t X^n_{s,u} \otimes \d X_u - \X_{s,t}\bigg|^{1 - \frac{p}{p'}}\\
    &\lesssim |\cP^n|^{1 - \frac{1}{q}} + \|X^n - X\|_{\infty}^{1 - \frac{p}{p'}} + \bigg\|\int_0^{\cdot} X^n_{u} \otimes \d X_u - \int_0^{\cdot} X_{u} \otimes \d X_u\bigg\|_{\infty}^{1 - \frac{p}{p'}}.
  \end{align*}
\end{proof}

\begin{remark}
  For a path $A \in D^1([0,T];\R^d)$ of finite $1$-variation, let us consider the controlled ordinary differential equation (ODE)
  \begin{equation}\label{eq: RDE example}
    Z_t = z_0 + \int_0^t \sigma(Z_s) \dd A_s, \qquad t \in [0,T],
  \end{equation}
  where the integral is interpreted in the Riemann--Stieltjes sense. It is a classical result that, provided $\sigma$ is sufficiently regular, the ODE in \eqref{eq: RDE example} is well-posed, and that the solution map $\Phi \colon A \mapsto Z$ is continuous with respect to the $1$-variation norm $\|\cdot\|_1$. A major insight of the theory of rough paths is that the solution map $\Phi$ can be extended from the space of smooth paths to the space $\mathscr{C}^{0,p\textup{-var}}([0,T];\R^d)$ of continuous geometric rough paths for $p \in (2,3)$; see, e.g., \cite{Friz2010}. Of course, the closure of a set containing only continuous paths with respect to $p$-variation norms will again only contain continuous paths.

  In the current framework of c{\`a}dl{\`a}g rough paths, Lemma~\ref{lem:rough path approximation} and Proposition~\ref{prop: discretized RDE} motivate us to consider instead the closure of c{\`a}dl{\`a}g paths of finite $1$-variation. For $p \in (2,3)$, let $\cD^{0,p}([0,T];\R^d)$ denote the closure of the set
  \begin{equation*}
    \bigg\{\mathbf{A} = (A,\mathbb{A}) \, : \, A \in D^1([0,T];\R^d) \, \text{ and } \, \mathbb{A}_{s,t} := \int_s^t A_{s,u} \otimes \d A_u \, \text{ for all } \, (s,t) \in \Delta_T\bigg\}
  \end{equation*}
  with respect to the rough path distance $\|\,\cdot\,;\,\cdot\,\|_{p}$ (as defined in \eqref{eq:rough path distance}), where $\int_s^t A_{s,u} \otimes \d A_u$ is defined as a left-point Riemann--Stieltjes integral. Then, the solution map $\Phi \colon A \mapsto Z$ extends continuously to the space $\cD^{0,p}([0,T];\R^d)$ by Theorem~\ref{thm: RDE}, and every path satisfying Property \textup{(RIE)} is in $\cD^{0,p'}([0,T];\R^d)$ for $p'\in (p,3)$ by Lemma~\ref{lem:rough path approximation}.
\end{remark}

Next, we shall verify that the piecewise constant approximation $X^n$ of $X$, as defined in Property \textup{(RIE)}, itself satisfies Property \textup{(RIE)} relative to any sequence of partitions $(\tcP^m)_{m \in \N}$ which are coarser than $\cP^n$ and have vanishing mesh size.

\begin{lemma}\label{lemma: X^n satisfies RIE}
  Suppose that a path $X$ satisfies Property \textup{(RIE)} relative to $p \in (2,3)$ and a sequence of partitions $(\cP^n)_{n \in \N}$, and let $X^n$ be the usual piecewise constant approximation of $X$ along $\cP^n$. Then the path $X^n$ satisfies Property \textup{(RIE)} relative to $p$ and any sequence of partitions $(\tcP^m)_{m \in \N}$ such that $\cP^n \subseteq \tcP^m$ for every $m \in \N$, and $|\tcP^m| \to 0$ as $m \to \infty$.
\end{lemma}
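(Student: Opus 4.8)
The plan is to verify directly the three conditions of Property \textup{(RIE)} for the path $X^n$ (with $n$ fixed from now on), relative to the same $p$ and the sequence $(\tcP^m)_{m \in \N}$, where $\tcP^m = \{0 = \tilde t^m_0 < \cdots < \tilde t^m_{N_m} = T\}$. The key fact, used throughout, is that since $\cP^n \subseteq \tcP^m$, every interval $[\tilde t^m_j, \tilde t^m_{j+1})$ is contained in a single interval $[t^n_k, t^n_{k+1})$, on which $X^n$ is constant; consequently the piecewise constant approximation of $X^n$ along $\tcP^m$ coincides with $X^n$ itself. This makes condition (i) immediate.

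For condition (ii) I would group the terms of the Riemann sum $\sum_j X^n_{\tilde t^m_j} \otimes X^n_{\tilde t^m_j \wedge t, \tilde t^m_{j+1} \wedge t}$ according to the interval $[t^n_k,t^n_{k+1})$ containing $\tilde t^m_j$: within each such block the intermediate increments of $X^n$ vanish, a completed block contributes $X_{t^n_k} \otimes X_{t^n_k, t^n_{k+1}}$, and the trailing incomplete block contributes zero. Hence the Riemann sum equals $\sum_{k \,:\, t^n_{k+1} \le t} X_{t^n_k} \otimes X_{t^n_k, t^n_{k+1}}$ for every $m$, so it converges (being independent of $m$) uniformly to this limit, which we take as $\int_0^t X^n_u \otimes \d X^n_u$. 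Running the same block computation on an interval $[\tilde t^m_k, \tilde t^m_\ell]$, and using that the left-point Riemann sums defining $\X^n$ in \eqref{eq:defn bbX^n} stabilise as soon as the grid refines $\cP^n$, I obtain the identity
\begin{equation*}
  \int_{\tilde t^m_k}^{\tilde t^m_\ell} X^n_u \otimes \d X^n_u - X^n_{\tilde t^m_k} \otimes X^n_{\tilde t^m_k, \tilde t^m_\ell} = \X^n_{\tilde t^m_k, \tilde t^m_\ell}, \qquad 0 \le k < \ell \le N_m,
\end{equation*}
which also confirms that the canonical lift of $X^n$ under Property \textup{(RIE)} is the one already used in the paper.

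To finish with condition (iii), this identity reduces the second supremum in \eqref{eq:RIE inequality} (written for $X^n$ along $\tcP^m$) to a supremum of $|\X^n_{\tilde t^m_k, \tilde t^m_\ell}|^{p/2} / \tilde w(\tilde t^m_k, \tilde t^m_\ell)$. I would take the control function
\begin{equation*}
  \tilde w(s,t) := 2\big(\|X^n\|_{p,[s,t]}^p + \|\X^n\|_{\frac{p}{2},[s,t]}^{\frac{p}{2}}\big), \qquad (s,t) \in \Delta_T,
\end{equation*}
which is superadditive, being a sum of superadditive functions, and finite, since $\|X^n\|_p \le \|X\|_p \le w(0,T)^{1/p}$ and $\|\X^n\|_{p/2} < \infty$ by Lemma~\ref{lemma: bbX^n p/2var estimate}. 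Then $|X^n_{s,t}|^p \le \|X^n\|_{p,[s,t]}^p \le \frac{1}{2}\tilde w(s,t)$ and $|\X^n_{\tilde t^m_k,\tilde t^m_\ell}|^{p/2} \le \frac{1}{2}\tilde w(\tilde t^m_k,\tilde t^m_\ell)$, so \eqref{eq:RIE inequality} holds with bound $1$; this is condition (iii), and the lemma follows.

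I expect the only mildly delicate point to be the bookkeeping in conditions (ii) and (iii): one must check carefully that passing to a grid finer than $\cP^n$ leaves the relevant Riemann sums unchanged, and that the ``defect'' quantity in \eqref{eq:RIE inequality} is precisely $\X^n_{\tilde t^m_k, \tilde t^m_\ell}$ rather than an endpoint-corrected variant. The rest is routine given the earlier lemmas; in particular, beyond its role as a standing hypothesis of Property \textup{(RIE)}, the assumption $|\tcP^m| \to 0$ is not otherwise used.
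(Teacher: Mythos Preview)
Your argument is correct. For conditions (i) and (ii) you and the paper agree (the paper simply says they ``hold trivially'' because the piecewise constant approximation of $X^n$ along $\tcP^m$ is $X^n$ itself; your block-by-block bookkeeping makes this explicit and additionally extracts the useful identity that the defect in \eqref{eq:RIE inequality} equals $\X^n_{\tilde t^m_k,\tilde t^m_\ell}$).

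For condition (iii) the two arguments diverge. You exploit the identity just mentioned and control $|\X^n_{s,t}|^{p/2}$ directly by $\|\X^n\|_{\frac{p}{2},[s,t]}^{p/2}$, invoking Lemma~\ref{lemma: bbX^n p/2var estimate} for finiteness; the resulting control is $\tilde w(s,t) = 2(\|X^n\|_{p,[s,t]}^p + \|\X^n\|_{\frac{p}{2},[s,t]}^{p/2})$. The paper instead applies the Young integration estimate \cite[Proposition~2.4]{Friz2018} to bound the defect by $\|X^n\|_{p,[r^m_k,r^m_\ell]}^{p/2}\|X^n\|_{\frac{p}{2},[r^m_k,r^m_\ell]}^{p/2}$, using that $\frac{1}{p}+\frac{2}{p}>1$, and takes as control $w_{3,n}(s,t) = \|X^n\|_{p,[s,t]}^p + \|X\|_p^{p/2}\|X^n\|_{\frac{p}{2},[s,t]}^{p/2}$, where the second term uses the $\frac{p}{2}$-variation of the \emph{path} $X^n$ (finite because $X^n$ has bounded variation) rather than of the lift $\X^n$. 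Your route is slightly more conceptual---it makes transparent that the canonical lift under Property \textup{(RIE)} coincides with the $\X^n$ of \eqref{eq:defn bbX^n}---while the paper's route avoids any reference to $\X^n$ or Lemma~\ref{lemma: bbX^n p/2var estimate} at the cost of a Young estimate. Both are short and self-contained.
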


\begin{proof}
  We need to verify each of the conditions (i)--(iii) of Property \textup{(RIE)} along the sequence of partitions $(\tcP^m)_{m \in \N}$. Since $\cP^n \subseteq \tcP^m$ for every $m \in \N$, the piecewise constant approximation of $X^n$ along the partition $\tcP^m$ is simply the path $X^n$ itself. Conditions (i) and (ii) thus hold trivially.

  Let $w_{1,n}$ be the control function given by $w_{1,n}(s,t) := \|X^n\|_{p,[s,t]}^p$, so that $|X^n_{s,t}|^p \leq w_{1,n}(s,t)$ for all $(s,t) \in \Delta_{T}$, and similarly let $w_{2,n}$ be the control function given by $w_{2,n}(s,t) := \|X^n\|_{\frac{p}{2},[s,t]}^{\frac{p}{2}}$. Let us also write $\tcP^m = \{0 = r^m_0 < r^m_1 < \cdots < r^m_{\widetilde{N}_m} = T\}$ for each $m \in \N$. Then, for any $m \in \N$ and any $0 \leq k < \ell \leq \widetilde{N}_m$, using the standard estimate for Young integration (see, e.g., \cite[Proposition~2.4]{Friz2018}) we have that
  \begin{align*}
    &\bigg|\int_{r^m_k}^{r^m_\ell} X^n_u \otimes \d X^n_u - X^n_{r^m_k} \otimes X^n_{r^m_k,r^m_\ell}\bigg|^{\frac{p}{2}} \lesssim \|X^n\|_{p,[r^m_k,r^m_\ell]}^{\frac{p}{2}} \|X^n\|_{\frac{p}{2},[r^m_k,r^m_\ell]}^{\frac{p}{2}}\\
    &\quad \leq \|X^n\|_p^{\frac{p}{2}} \|X^n\|_{\frac{p}{2},[r^m_k,r^m_\ell]}^{\frac{p}{2}}
    \leq \|X\|_p^{\frac{p}{2}} w_{2,n}(r^m_k,r^m_\ell).
  \end{align*}
  Thus, condition (iii) holds for $X^n$ with the control function $w_{3,n}$, given by
  \begin{equation*}   
    w_{3,n}(s,t) := w_{1,n}(s,t) + \|X\|_p^{\frac{p}{2}} w_{2,n}(s,t), \qquad (s,t) \in \Delta_{T}.
  \end{equation*}
\end{proof}

We are now in a position to complete the proof of Theorem~\ref{thm: Euler scheme convergence}. For this, we will apply in particular the result of Theorem~\ref{thm:rough int under RIE}, which states that, under Property \textup{(RIE)}, the rough integral can be obtained as a limit of classical left-point Riemann sums.

\begin{proof}[Proof of Theorem~\ref{thm: Euler scheme convergence}]
  Note that the Euler scheme in \eqref{eq:RDE Euler scheme} may be expressed as the solution of the controlled ODE
  \begin{equation}\label{eq:ODE Y^n}
    Y^n_t = y_0 + \int_0^t b(\gamma^n_s,Y^n_s) \dd \gamma^n_s + \int_0^t \sigma(\gamma^n_s,Y^n_s) \dd X^n_s, \qquad t \in [0,T],
  \end{equation}
  where $\gamma^n$ denotes the time discretization path along $\cP^n$ defined in \eqref{eq:time discretization operator}, and the integrals are defined as limits of left-point Riemann sums. Recall that $\tY^n$ denotes the solution of the RDE in \eqref{eq:RDE tilde Y^n}, that is
  \begin{equation}\label{eq:tY^n RDE in proof}
    \tY^n_t = y_0 + \int_0^t b(\gamma^n_s,\tY^n_s) \dd \gamma^n_s + \int_0^t \sigma(\gamma^n_s,\tY^n_s) \dd \bX^n_s, \qquad t \in [0,T],
  \end{equation}
  where $\bX^n$ is the canonical rough path lift of $X^n$, as constructed in \eqref{eq:defn bbX^n}.

  Since $X^n$ is piecewise constant, it is clear from the definition of $\X^n$ that $\X^n_{s,t} = 0$ for any times $s \leq t$ which lie in the same subinterval $[t^n_k,t^n_{k+1})$ of the partition $\cP^n$. Since $\gamma^n$ is also constant on each such subinterval, it follows from the definitions of Young and rough integrals that the solution $\tY^n$ of \eqref{eq:tY^n RDE in proof} is itself also piecewise constant along the partition $\cP^n$.

  Let $\tcP^m = \{0 = r^m_0 < r^m_1 < \cdots < r^m_{\widetilde{N}_m} = T\}$, $m \in \N$, be any sequence of partitions with mesh size converging to $0$, such that $\cP^n \subseteq \tcP^m$ for every $m \in \N$. By Lemma~\ref{lemma: X^n satisfies RIE}, we have that the path $X^n$ satisfies Property \textup{(RIE)} relative to $p$ and the sequence $(\tcP^m)_{m \in \N}$. Since $\gamma^n$ and $\tY^n$ are piecewise constant along the partition $\cP^n$, it is clear that the jump times of the integrand $s \mapsto \sigma(\gamma^n_s,\tY^n_s)$ all belong to $\cP^n$, and thus also belong to the set $\liminf_{m \to \infty} \tcP^m$. It thus follows from Theorem~\ref{thm:rough int under RIE} that the rough integral $\int_0^t \sigma(\gamma^n_s,\tY^n_s) \dd \bX^n_s$ is equal to a limit of left-point Riemann sums along the sequence $(\tcP^m)_{m \in \N}$. That is, for any $t \in [0,T]$, we have that
  \begin{align*}
    \int_0^t \sigma(\gamma^n_s,\tY^n_s) \dd \bX^n_s &= \lim_{m \to \infty} \sum_{k=0}^{\widetilde{N}_m-1} \sigma(\gamma^n_{r^m_k},\tY^n_{r^m_k}) X^n_{r^m_k \wedge t,r^m_{k+1} \wedge t}\\  
    &= \sum_{k=0}^{N_n-1} \sigma(\gamma^n_{t^n_k},\tY^n_{t^n_k}) X^n_{t^n_k \wedge t,t^n_{k+1} \wedge t} = \int_0^t \sigma(\gamma^n_s,\tY^n_s) \dd X^n_s.
  \end{align*}
  Since these integrals are equal, it follows that the ODE in \eqref{eq:ODE Y^n} and the RDE in \eqref{eq:tY^n RDE in proof} are consistent, so that $Y^n = \tY^n$. The result then follows from Proposition~\ref{prop: discretized RDE}.
\end{proof}

\subsection{Error bound for an approximate Euler scheme}

In general, the Euler scheme~\eqref{eq:RDE Euler scheme} is not applicable to numerically approximate the solution of an SDE driven by a general L{\'e}vy process---as we will consider in Section~\ref{section: Levy processes} below---since the increments of L{\'e}vy processes cannot always be simulated. Therefore, to obtain a numerical approximation of the solution of such a L{\'e}vy-driven SDE, one needs to consider approximate Euler schemes---see, e.g., \cite{Jacod2005,Rubenthaler2003,Dereich2011}---where the increments of the driving L{\'e}vy process are approximated by random variables with known distributions.

\medskip

As a pathwise counterpart, we introduce the approximate Euler scheme $\widehat{Y}^n$ of the RDE \eqref{eq:RDE Y} along the partition $\cP^n$, given by
\begin{equation}\label{eq:approximate Euler scheme for RDE}
  \widehat{Y}^n_t = y_0 + \sum_{i \hspace{1pt} : \hspace{1pt} t^n_{i+1} \leq t} b(t^n_i,\widehat{Y}^n_{t^n_i}) (t^n_{i+1} - t^n_i) +  \sum_{i \hspace{1pt} : \hspace{1pt} t^n_{i+1} \leq t} \sigma(t^n_i,\widehat{Y}^n_{t^n_i}) (\widehat{X}_{t^n_{i+1}} - \widehat{X}_{t^n_i}),
\end{equation}
for $t \in [0,T]$, with the modified driving signal
\begin{equation*}
  \widehat{X} := X + \phi,
\end{equation*}
where $\phi \in D^q([0,T];\R^d)$, for some $q \in [1,2)$ such that $\frac{1}{p} + \frac{1}{q} > 1$, and, as usual, we write $\mathcal{P}^n = \{0 = t^n_0 < t^n_1 < \cdots < t_{N_n}^n = T\}$.

\medskip

While the approximation error of the Euler scheme~\eqref{eq:RDE Euler scheme} was only caused by discretizing the time interval $[0,T]$, the approximate Euler scheme~\eqref{eq:approximate Euler scheme for RDE} produces an additional approximation error due to taking the modified driving signal $\widehat{X}$ as an input, instead of the actual driving signal $X$ of the RDE~\eqref{eq:RDE Y}.

To ensure the convergence of the approximate Euler scheme, we first need to verify that, if the actual driving signal satisfies Property \textup{(RIE)}, then the same is true for the modified driving signal.

\begin{proposition}\label{prop: stability of RIE}
  Suppose that $X \in D([0,T];\mathbb{R}^d)$ satisfies Property \textup{(RIE)} relative to some $p \in (2,3)$ and a sequence of partitions $\cP^n = \{0 = t^n_0 < t^n_1 < \cdots < t^n_{N_n} = T\}$, $n \in \N$. Let $\phi \in D^q([0,T];\R^d)$ for some $q \in [1,2)$ such that $\frac{1}{p} + \frac{1}{q} > 1$. For each $n \in \N$, we define $\phi^n \colon [0,T] \to \R^d$ by
  \begin{equation}\label{eq: defn phi^n approx}
    \phi_t^n = \phi_T \1_{\{T\}}(t) + \sum_{k=0}^{N_n-1} \phi_{t_k^n} \1_{[t_k^n,t_{k+1}^n)}(t), \qquad t \in [0,T],
  \end{equation}
  as the discretization of $\phi$ along $\cP^n$. Suppose that $\|\phi^n - \phi\|_q \to 0$ as $n \to \infty$. Then the path $\widehat{X} = X + \phi$ satisfies Property \textup{(RIE)} relative to $p$ and $(\cP^n)_{n \in \N}$.
\end{proposition}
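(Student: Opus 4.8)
The plan is to verify conditions (i)--(iii) of Property \textup{(RIE)} directly for $\widehat X = X + \phi$ along the \emph{same} sequence $(\cP^n)_{n\in\N}$, using that the piecewise constant approximation operator is linear, so that the approximation of $\widehat X$ along $\cP^n$ equals $X^n + \phi^n$, with $X^n$ as in Property \textup{(RIE)} and $\phi^n$ as in \eqref{eq: defn phi^n approx}. Condition (i) is then immediate, since $\widehat X^n - \widehat X = (X^n - X) + (\phi^n - \phi)$ and both summands vanish uniformly: the first by condition (i) for $X$, and the second because $\phi^n_0 = \phi_0$ forces $\|\phi^n - \phi\|_\infty \le \|\phi^n - \phi\|_q \to 0$.

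For condition (ii), I would expand the Riemann sums bilinearly as
\begin{equation*}
  \int_0^t \widehat X^n_u \otimes \d\widehat X_u = \int_0^t X^n_u \otimes \d X_u + \int_0^t X^n_u \otimes \d\phi_u + \int_0^t \phi^n_u \otimes \d X_u + \int_0^t \phi^n_u \otimes \d\phi_u,
\end{equation*}
where each term on the right is the left-point Riemann sum along $\cP^n$. The first converges uniformly by condition (ii) for $X$. Since $X^n$ and $\phi^n$ are step functions, the remaining three Riemann sums coincide with the Young integrals $\int_0^\cdot X^n_u \otimes \d\phi_u$, $\int_0^\cdot \phi^n_u \otimes \d X_u$ and $\int_0^\cdot \phi^n_u \otimes \d\phi_u$; by the interpolation argument used in the proof of Lemma~\ref{lem:rough path approximation} (namely $\sup_n \|X^n\|_p \le \|X\|_p$ and $\|X^n - X\|_\infty \to 0$) one has $\|X^n - X\|_{p'} \to 0$ for every $p' > p$, so fixing $p' > p$ with $\frac{1}{p'} + \frac{1}{q} > 1$ and invoking continuity of the Young integral (see, e.g., \cite[Proposition~2.4]{Friz2018}) shows that all three converge uniformly in $t$ to $\int_0^\cdot X_u \otimes \d\phi_u$, $\int_0^\cdot \phi_u \otimes \d X_u$ and $\int_0^\cdot \phi_u \otimes \d\phi_u$; one then defines $\int_0^t \widehat X_u \otimes \d\widehat X_u$ as the sum of these four limits.

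For condition (iii), let $w$ be the control from \eqref{eq:RIE inequality} for $X$ and set $v(s,t) := \|\phi\|_{q,[s,t]}^q$, which is a control. Then $|\widehat X_{s,t}|^p \lesssim |X_{s,t}|^p + |\phi_{s,t}|^p \le w(s,t) + v(s,t)^{p/q} \le w(s,t) + v(0,T)^{\frac{p}{q}-1} v(s,t)$. For the iterated-integral term I would again expand bilinearly, matching the left-point corrections term by term: the $X$-against-$X$ difference $\int_{t^n_k}^{t^n_\ell} X^n_u \otimes \d X_u - X_{t^n_k} \otimes X_{t^n_k,t^n_\ell}$ is bounded by $w(t^n_k,t^n_\ell)^{2/p}$ directly from \eqref{eq:RIE inequality}; the two mixed differences and the $\phi$-against-$\phi$ difference are estimated by applying the Young estimate \cite[Proposition~2.4]{Friz2018} to the step-function integrands $X^n$, $\phi^n$, using that on any subinterval $\|X^n\|_{p,[t^n_k,t^n_\ell]} \le \|X\|_{p,[t^n_k,t^n_\ell]} \le w(t^n_k,t^n_\ell)^{1/p}$ and $\|\phi^n\|_{q,[t^n_k,t^n_\ell]} \le \|\phi\|_{q,[t^n_k,t^n_\ell]} = v(t^n_k,t^n_\ell)^{1/q}$; this bounds the mixed differences by a constant times $w^{1/p}v^{1/q}$ and the $\phi$-against-$\phi$ difference by a constant times $v^{2/q}$, all evaluated at $(t^n_k,t^n_\ell)$. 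Raising to the power $\frac{p}{2}$ and using $a^{1/2}b^{1/2} \le \frac12(a+b)$ together with $v^\alpha \le v(0,T)^{\alpha-1}v$ and $w^\alpha \le w(0,T)^{\alpha-1}w$ for $\alpha \ge 1$, each term is dominated by a fixed constant times $(w+v)(t^n_k,t^n_\ell)$. Setting $\widehat w := C(w+v)$ with $C$ large enough then makes both suprema in \eqref{eq:RIE inequality} at most $1$, completing the verification.

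The bilinear expansions and the constant-chasing are routine; the point that needs genuine care is condition (iii), where every bound must be rewritten as a constant times an honest control function---in particular the terms with exponents exceeding $1$, such as $v^{p/q}$ (from the path increment and from the $\phi$-against-$\phi$ term) and the mixed $w^{1/2}v^{p/(2q)}$, must be linearised via the boundedness of $w$ and $v$ on $[0,T]$ and the elementary inequality $a^{1/2}b^{1/2} \le \frac12(a+b)$. The only other mildly delicate step is the identification in (ii) of the Riemann sums involving the step function $X^n$ against $\phi$ with the corresponding Young integrals, and their uniform-in-$n$ control in $p'$-variation, which is handled exactly as in the proof of Lemma~\ref{lem:rough path approximation}.
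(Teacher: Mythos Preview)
Your proof is correct and follows essentially the same approach as the paper: the same bilinear expansion in (ii), the same Young-integral estimates with the interpolation $\|X^n-X\|_{p'}\to 0$, and the same decomposition of the iterated-integral term in (iii) into the RIE-controlled piece, two mixed Young pieces, and the $\phi$-against-$\phi$ piece. The only cosmetic difference is in the final packaging of the control in (iii): the paper keeps $w_X^{1/2}w_\phi^{p/(2q)}$ and $w_\phi^{p/q}$ as separate summands---observing that these are themselves superadditive because $\tfrac12+\tfrac{p}{2q}>1$ and $\tfrac{p}{q}>1$---whereas you linearise them via $a^{1/2}b^{1/2}\le\tfrac12(a+b)$ and $v^\alpha\le v(0,T)^{\alpha-1}v$ to arrive at the simpler $C(w+v)$; both are valid and lead to the same conclusion.
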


\begin{proof}
  We need to verify the conditions (i)--(iii) of Property \textup{(RIE)}.

  \emph{(i):} Letting $\widehat{X}^n$ denote the piecewise constant approximation of $\widehat{X}$ along the partition $\cP^n$, it is clear that $\widehat{X}^n = X^n + \phi^n$ for each $n \in \N$. Since $X^n$ converges uniformly to $X$ by Property \textup{(RIE)}, and $\|\phi^n - \phi\|_q \to 0$ by assumption, it is clear that $\widehat{X}^n$ converges uniformly to $\widehat{X}$ as $n \to \infty$.

  \emph{(ii):} We need to verify that the integral
  \begin{equation*}
    \int_0^t \widehat{X}_u^n \otimes \d \widehat{X}_u = \int_0^t X_u^n \otimes \d X_u + \int_0^t X_u^n \otimes \d \phi_u + \int_0^t \phi_u^n \otimes \d X_u + \int_0^t \phi_u^n \otimes \d \phi_u,
  \end{equation*} 
  converges as $n \to \infty$ to the limit
  \begin{equation*}
    \int_0^t \widehat{X}_u \otimes \d \widehat{X}_u := \int_0^t X_u \otimes \d X_u + \int_0^t X_u \otimes \d \phi_u + \int_0^t \phi_u \otimes \d X_u + \int_0^t \phi_u \otimes \d \phi_u,
  \end{equation*}
  uniformly in $t \in [0,T]$, where the latter three integrals are defined as Young integrals.

  Since $X$ satisfies Property \textup{(RIE)}, we have that
  \begin{equation*}
    \bigg\|\int_0^\cdot X_u^n \otimes \d X_u - \int_0^\cdot X_u \otimes \d X_u \bigg \|_\infty \, \longrightarrow \, 0 \qquad \text{as} \quad n \, \longrightarrow \, \infty.
  \end{equation*}
  Let $p' > p$ such that $\frac{1}{p'} + \frac{1}{q} > 1$. By the standard estimate for Young integrals---see, e.g., \cite[Proposition~2.4]{Friz2018}---we have, for all $t \in [0,T]$, that
  \begin{equation*}
    \bigg|\int_0^t X_u^n \otimes \d \phi_u - \int_0^t X_u \otimes \d \phi_u\bigg| \lesssim \|X^n - X\|_{p'} \|\phi\|_q.
  \end{equation*}
  It follows by interpolation (see, e.g., \cite[Proposition~5.5]{Friz2010}) that
  \begin{equation*}
    \|X^n - X\|_{p'} \leq \|X^n - X\|_\infty^{1 - \frac{p}{p'}} \|X^n - X\|_p^{\frac{p}{p'}}.
  \end{equation*}
  Since $X^n$ converges uniformly to $X$ as $n \to \infty$, and $\sup_{n \in \N} \|X^n\|_p \leq \|X\|_p < \infty$, we deduce that
  \begin{equation*}
     \bigg\|\int_0^\cdot X_u^n \otimes \d \phi_u - \int_0^\cdot X_u \otimes \d \phi_u \bigg \|_\infty \, \longrightarrow \, 0 \qquad \text{as} \quad n \, \longrightarrow \, \infty.
  \end{equation*}
  Similarly, for each $t \in [0,T]$, it holds that
  \begin{equation*}
    \bigg|\int_0^t \phi_u^n \otimes \d X_u - \int_0^t \phi_u \otimes \d X_u \bigg| \lesssim \|\phi^n - \phi\|_q \|X\|_p,
  \end{equation*}
  and
  \begin{equation*}
    \bigg|\int_0^t \phi_u^n \otimes \d \phi_u - \int_0^t \phi_u \otimes \d \phi_u \bigg| \lesssim \|\phi^n - \phi\|_q \|\phi\|_q,
  \end{equation*}
  and, since $\|\phi^n - \phi\|_q \to 0$ as $n \to \infty$, we infer the required convergence.

  \emph{(iii):} We aim to find a control function $w$ such that
  \begin{equation}\label{eq: condition (iii)}
    \sup_{(s,t) \in \Delta_T} \frac{|\widehat{X}_{s,t}|^p}{w(s,t)} + \sup_{n \in \N} \, \sup_{0 \leq k < \ell \leq N_n} \frac{|\int_{t^n_k}^{t^n_\ell} \widehat{X}^n_{t^n_k,u} \otimes \d \widehat{X}_u|^{\frac{p}{2}}}{w(t_k^n,t_\ell^n)} \leq 1,
  \end{equation}
  where
  \begin{equation*}
    \int_{t^n_k}^{t^n_\ell} \widehat{X}_{t^n_k,u}^n \otimes \d \widehat{X}_u = \int_{t^n_k}^{t^n_\ell} X_{t^n_k,u}^n \otimes \d X_u + \int_{t^n_k}^{t^n_\ell} X_{t^n_k,u}^n \otimes \d \phi_u + \int_{t^n_k}^{t^n_\ell} \phi_{t^n_k,u}^n \otimes \d X_u + \int_{t^n_k}^{t^n_\ell} \phi_{t^n_k,u}^n \otimes \d \phi_u.
  \end{equation*}

  Let $w_X$ be the control function with respect to which $X$ satisfies Property \textup{(RIE)}, and define moreover the control function $w_\phi$, given by $w_{\phi}(s,t) = \|\phi\|_{q,[s,t]}^q$ for $(s,t) \in \Delta_T$.

  We have from Property \textup{(RIE)} that
  \begin{equation*}
    \sup_{(s,t) \in \Delta_T} \frac{|\widehat{X}_{s,t}|^p}{w_X(s,t) + w_\phi(s,t)} \lesssim \sup_{(s,t) \in \Delta_T} \frac{|X_{s,t}|^p}{w_X(s,t)} + \sup_{(s,t) \in \Delta_T} \frac{|\phi_{s,t}|^p}{w_\phi(s,t)} \leq 2,
  \end{equation*}
  and that
  \begin{equation*}  
    \sup_{n \in \N} \, \sup_{0 \leq k < \ell \leq N_n} \frac{|\int_{t_k^n}^{t_\ell^n} X_{t_k^n,u}^n \otimes \d X_u|^{\frac{p}{2}}}{w_X(t_k^n,t_\ell^n)} \leq 1.
  \end{equation*}
  By the standard estimate for Young integrals (see, e.g., \cite[Proposition~2.4]{Friz2018}), for every $n \in \N$ and $0 \leq k < \ell \leq N_n$, we have
  \begin{align*}
    \bigg|\int_{t_k^n}^{t_\ell^n} X_{t_k^n,u}^n \otimes \d \phi_u \bigg|^{\frac{p}{2}} &\lesssim \|X^n\|_{p,[t_k^n,t_\ell^n]}^{\frac{p}{2}} \|\phi\|_{q,[t_k^n,t_\ell^n]}^{\frac{p}{2}}\\
    &\leq \|X\|_{p,[t_k^n,t_\ell^n]}^{\frac{p}{2}} \|\phi\|_{q,[t_k^n,t_\ell^n]}^{\frac{p}{2}} \leq w_X(t_k^n,t_\ell^n)^{\frac{1}{2}} w_\phi(t_k^n,t_\ell^n)^{\frac{p}{2q}},
  \end{align*}
  and we can similarly obtain
  \begin{equation*}
    \bigg|\int_{t_k^n}^{t_\ell^n} \phi^n_{t_k^n,u} \otimes \d X_u \bigg|^{\frac{p}{2}} \lesssim w_X(t_k^n,t_\ell^n)^{\frac{1}{2}} w_\phi(t_k^n,t_\ell^n)^{\frac{p}{2q}}
  \end{equation*}
  and
  \begin{equation*}
    \bigg|\int_{t_k^n}^{t_\ell^n} \phi_{t_k^n,u}^n \otimes \d \phi_u\bigg|^{\frac{p}{2}} \lesssim w_\phi(t_k^n,t_\ell^n)^{\frac{p}{q}}.
  \end{equation*}
  Since $p \in (2,3)$ and $q \in [1,2)$, we have that $\frac{1}{2} + \frac{p}{2q} > 1$ and $\frac{p}{q} > 1$, and it follows that the maps $(s,t) \mapsto w_X(s,t)^{\frac{1}{2}} w_\phi(s,t)^{\frac{p}{2q}}$ and $(s,t) \mapsto w_\phi(s,t)^{\frac{p}{q}}$ are superadditive and thus control functions. We deduce that \eqref{eq: condition (iii)} holds with a control function $w$ of the form
  \begin{equation*}  
    w(s,t) = C\Big(w_X(s,t) + w_\phi(s,t) + w_X(s,t)^{\frac{1}{2}} w_\phi(s,t)^{\frac{p}{2q}} + w_\phi(s,t)^{\frac{p}{q}}\Big), \qquad (s,t) \in \Delta_T,
  \end{equation*}
  where $C > 0$ is a suitable constant which depends only on $p$ and $q$.
\end{proof}

By Proposition~\ref{prop: stability of RIE}, the modified driving signal $\widehat{X}$ satisfies Property \textup{(RIE)}, and can thus be canonically lifted to a rough path $\widehat{\mathbf{X}} = (\widehat{X},\widehat{\mathbb{X}}) \in \cD^p([0,T];\R^d)$ via \eqref{eq:RIE rough path}. By Theorem~\ref{thm: RDE}, the RDE \eqref{eq:RDE Y} driven by $\widehat{\mathbf{X}}$ has a unique solution $\widehat{Y}$, and the approximate Euler scheme $\widehat{Y}^n$ in \eqref{eq:approximate Euler scheme for RDE} converges to $\widehat{Y}$ by Theorem~\ref{thm: Euler scheme convergence}. We will see an application of this to SDEs driven by L\'evy processes in Section~\ref{section: Levy processes}.

The next proposition provides an error and convergence analysis for the approximate Euler scheme \eqref{eq:approximate Euler scheme for RDE} with respect to the solution $Y$ of the RDE~\eqref{eq:RDE Y} driven by the rough path $\bX = (X,\mathbb{X})$ under Property \textup{(RIE)}.

\begin{proposition}\label{prop:approximate Euler scheme}
  Suppose that $X \in D([0,T];\R^d)$ satisfies Property \textup{(RIE)} relative to $p \in (2,3)$ and a sequence of partitions $(\cP^n)_{n \in \N}$, and let $\bX$ be its canonical rough path lift. Let $\phi \in D^q([0,T];\R^d)$ for some $q \in (1,2)$ such that $\frac{1}{p} + \frac{1}{q} > 1$, let $\phi^n$ be the piecewise constant approximation of $\phi$, as defined in \eqref{eq: defn phi^n approx}, and assume that $\|\phi^n - \phi\|_q \to 0$ as $n \to \infty$. Let $Y$ be the solution of the RDE \eqref{eq:RDE Y} driven by $\bX$, and let $\widehat{Y}^n$ be the approximate Euler scheme defined in \eqref{eq:approximate Euler scheme for RDE}. We have the error estimate
  \begin{align*}
    \|\widehat{Y}^n - Y\|_{p'} &\lesssim (1 + \|X\|_p + \|\phi\|_q) \|\phi\|_q + |\cP^n|^{1 - \frac{1}{q}} + (\|X^n - X\|_\infty + \|\phi^n - \phi\|_\infty)^{1 - \frac{p}{p'}}\\
    &\quad + \bigg(\bigg\|\int_0^\cdot X_u^n \otimes \d X_u - \int_0^\cdot X_u \otimes \d X_u\bigg\|_\infty + \|X^n - X\|_{p'} + \|\phi^n - \phi\|_q\bigg)^{\hspace{-2pt}1 - \frac{p}{p'}}
  \end{align*}
  for any $p' \in (p,3)$ such that $\frac{1}{p'} + \frac{1}{q} > 1$, where the implicit multiplicative constant depends on $p, p', q, \|b\|_{C_b^2}, \|\sigma\|_{C_b^3}, T, \|X\|_\infty, \|\bX\|_p, \|\phi\|_\infty, \|\phi\|_q$ and $w(0,T)$, where $w$ is the control function for which \eqref{eq:RIE inequality} holds. In particular, we have that
  \begin{equation}\label{eq:bound on limsup approx Euler}
    \limsup_{n \to \infty} \, \|\widehat{Y}^n - Y\|_{p'} \lesssim (1 + \|X\|_p + \|\phi\|_q) \|\phi\|_q.
  \end{equation}
\end{proposition}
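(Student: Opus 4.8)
The plan is to route the comparison through the solution $\widehat{Y}$ of the RDE~\eqref{eq:RDE Y} driven by the canonical rough path lift $\widehat{\bX} = (\widehat{X},\widehat{\X})$ of $\widehat{X} = X + \phi$, which is available since $\widehat{X}$ satisfies Property~\textup{(RIE)} by Proposition~\ref{prop: stability of RIE}. Splitting
\[
\|\widehat{Y}^n - Y\|_{p'} \le \|\widehat{Y}^n - \widehat{Y}\|_{p'} + \|\widehat{Y} - Y\|_{p'},
\]
the first summand is the error of the Euler scheme~\eqref{eq:RDE Euler scheme} for the RDE driven by $\widehat{X}$ --- which is exactly what \eqref{eq:approximate Euler scheme for RDE} computes --- so Theorem~\ref{thm: Euler scheme convergence} applied to $\widehat{X}$ bounds it by a constant times
\[
|\cP^n|^{1 - \frac{1}{q}} + \|\widehat{X}^n - \widehat{X}\|_\infty^{1 - \frac{p}{p'}} + \Big\|\int_0^\cdot \widehat{X}_u^n \otimes \d\widehat{X}_u - \int_0^\cdot \widehat{X}_u \otimes \d\widehat{X}_u\Big\|_\infty^{1 - \frac{p}{p'}},
\]
where, in view of the explicit form of the control $w_{\widehat X}$ constructed in the proof of Proposition~\ref{prop: stability of RIE} and of $|\widehat{X}_0| \le |X_0| + \|\phi\|_\infty$, the constant depends only on the quantities listed in the statement. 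I would then insert $\widehat{X}^n - \widehat{X} = (X^n - X) + (\phi^n - \phi)$ and expand the integral difference bilinearly into $\int_0^\cdot X^n_u \otimes \d X_u - \int_0^\cdot X_u \otimes \d X_u$, $\int_0^\cdot (X^n - X)_u \otimes \d\phi_u$, $\int_0^\cdot (\phi^n - \phi)_u \otimes \d X_u$ and $\int_0^\cdot (\phi^n - \phi)_u \otimes \d\phi_u$, exactly as in the proof of Proposition~\ref{prop: stability of RIE}, step~(ii). Since $(X^n - X)_0 = (\phi^n - \phi)_0 = 0$, the standard Young estimate \cite[Proposition~2.4]{Friz2018} (applicable since $\tfrac1{p'} + \tfrac1q > 1$) bounds the last three of these, uniformly in the upper limit, by $\|X^n - X\|_{p'}\|\phi\|_q$, $\|\phi^n - \phi\|_q\|X\|_p$ and $\|\phi^n - \phi\|_q\|\phi\|_q$ respectively; absorbing $\|X\|_p$ and $\|\phi\|_q$ into the constant and using the subadditivity of $t \mapsto t^{1 - p/p'}$ then reproduces the second and third lines of the claimed estimate.

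For the term $\|\widehat{Y} - Y\|_{p'}$ I would apply the stability estimate~\eqref{eq:estimate for RDE} of Theorem~\ref{thm: RDE}, taken with $p'$ in place of $p$ and with the time paths $A = H = \tA = \tH$ equal to $t \mapsto t$ (as in the proof of Proposition~\ref{prop: discretized RDE}); this is admissible since $p' < 3$, since $Y$ is the $D^{p'}$-solution of~\eqref{eq:RDE Y} as well by uniqueness, and since $\|\bX\|_{p'} \le \|\bX\|_p$ and $\|\widehat{\bX}\|_{p'} \le \|\bX\|_p + \|\widehat{\bX};\bX\|_p$ are finite. It gives $\|\widehat{Y} - Y\|_{p'} \lesssim \|\widehat{\bX};\bX\|_{p'} = \|\phi\|_{p'} + \|\widehat{\X} - \X\|_{p'/2}$, where $\|\phi\|_{p'} \le \|\phi\|_q$ as $q < 2 < p'$. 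Subtracting the defining formula~\eqref{eq:RIE rough path} for $\X$ from that for $\widehat{\X}$ and using the bilinear expansion above yields
\[
\widehat{\X}_{s,t} - \X_{s,t} = \int_s^t (X_u - X_s) \otimes \d\phi_u + \int_s^t (\phi_u - \phi_s) \otimes \d X_u + \int_s^t (\phi_u - \phi_s) \otimes \d\phi_u,
\]
so the Young estimate gives $|\widehat{\X}_{s,t} - \X_{s,t}| \lesssim \|X\|_{p,[s,t]}\|\phi\|_{q,[s,t]} + \|\phi\|_{q,[s,t]}^2$. Summing the $\tfrac{p'}2$-th powers over an arbitrary partition, using that a product $w_1^{\lambda_1}w_2^{\lambda_2}$ of control functions with $\lambda_1 + \lambda_2 \ge 1$ is again a control function (here $\lambda_1 + \lambda_2 = \tfrac{p'}2(\tfrac1p + \tfrac1q) > 1$, respectively $\tfrac{p'}q > 1$), and taking the power $\tfrac2{p'}$, I obtain $\|\widehat{\X} - \X\|_{p'/2} \lesssim \|X\|_p\|\phi\|_q + \|\phi\|_q^2$, hence $\|\widehat{Y} - Y\|_{p'} \lesssim (1 + \|X\|_p + \|\phi\|_q)\|\phi\|_q$, which is the first line of the claimed estimate.

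Adding the two bounds gives the displayed error estimate. For~\eqref{eq:bound on limsup approx Euler} I would let $n \to \infty$: $|\cP^n| \to 0$, and by Property~\textup{(RIE)} both $\|X^n - X\|_\infty$ and the integral difference tend to $0$; moreover $\|\phi^n - \phi\|_\infty \le \|\phi^n - \phi\|_q \to 0$ and $\|\phi^n - \phi\|_q \to 0$ by assumption, while $\|X^n - X\|_{p'} \le \|X^n - X\|_\infty^{1 - p/p'}(2\|X\|_p)^{p/p'} \to 0$ by interpolation together with $\sup_n\|X^n\|_p \le \|X\|_p$; thus every term on the right-hand side of the estimate except $(1 + \|X\|_p + \|\phi\|_q)\|\phi\|_q$ vanishes in the limit. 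The only point that is not a direct invocation of results already established is the control-function bookkeeping behind the bound on $\|\widehat{\X} - \X\|_{p'/2}$, together with the verification that every implicit constant reduces to the listed quantities; this is routine but should be carried out with some care.
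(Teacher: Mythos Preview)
Your proposal is correct and follows essentially the same route as the paper: split via the intermediate solution $\widehat{Y}$ driven by $\widehat{\bX}$, apply Theorem~\ref{thm: Euler scheme convergence} to $\|\widehat{Y}^n - \widehat{Y}\|_{p'}$ with the bilinear expansion of the integral difference (as in Proposition~\ref{prop: stability of RIE}), and apply Theorem~\ref{thm: RDE} to $\|\widehat{Y} - Y\|_{p'}$, bounding $\|\widehat{\X} - \X\|$ via the three Young integrals. The only cosmetic difference is that the paper bounds $\|\widehat{\X} - \X\|_{p/2}$ by an explicit Cauchy--Schwarz summation over partitions and then uses $\|\cdot\|_{p'/2} \le \|\cdot\|_{p/2}$, whereas you work directly in $\tfrac{p'}{2}$-variation via the superadditivity of $w_X^{\lambda_1} w_\phi^{\lambda_2}$; both arguments are equivalent.
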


\begin{proof}
  By Proposition~\ref{prop: stability of RIE}, we know that the path $\widehat{X} = X + \phi$ satisfies Property \textup{(RIE)} relative to $p$ and $(\cP^n)_{n \in \N}$. Let $\widehat{\bX}$ be the canonical rough path lift of $\widehat{X}$, and let $Y$ and $\widehat{Y}$ be the solutions of the RDE~\eqref{eq:RDE Y} driven by $\bX$ and $\widehat{\mathbf{X}}$ respectively. It is clear that
  \begin{equation*}
    \|\widehat{Y}^n - Y\|_{p'} \leq \|\widehat{Y}^n - \widehat{Y}\|_{p'} + \|\widehat{Y} - Y\|_{p'}.
  \end{equation*} 
  By Theorem~\ref{thm: RDE}, we have the estimate
  \begin{equation*}
    \|\widehat{Y} - Y\|_{p'} \lesssim \|\widehat{\bX};\bX\|_{p'},
  \end{equation*} 
  and, by Theorem~\ref{thm: Euler scheme convergence}, we have that
  \begin{equation*}
    \|\widehat{Y}^n - \widehat{Y}\|_{p'} \lesssim |\cP^n|^{1 - \frac{1}{q}} + \|\widehat{X}^n - \widehat{X}\|_{\infty}^{1 - \frac{p}{p'}} + \bigg\|\int_0^\cdot \widehat{X}_u^n \otimes \d \widehat{X}_u - \int_0^\cdot \widehat{X}_u \otimes \d \widehat{X}_u \bigg\|_{\infty}^{1 - \frac{p}{p'}},
  \end{equation*}
  where $\widehat{X}^n$ is the piecewise constant approximation of $\widehat{X}$ along $\cP^n$. Since $\widehat{X}^n = X^n + \phi^n$, we can bound
  \begin{equation*}
    \|\widehat{X}^n - \widehat{X}\|_\infty \leq \|X^n - X\|_\infty + \|\phi^n - \phi\|_\infty.
  \end{equation*}
  As shown in the proof of Proposition~\ref{prop: stability of RIE},
  \begin{align*}
    &\bigg\|\int_0^\cdot \widehat{X}_u^n \otimes \d \widehat{X}_u - \int_0^\cdot \widehat{X}_u \otimes \d \widehat{X}_u\bigg\|_\infty\\
    &\quad\lesssim \bigg\|\int_0^\cdot X_u^n \otimes \d X_u - \int_0^\cdot X_u \otimes \d X_u \bigg\|_\infty + \|X^n - X\|_{p'} \|\phi\|_q + \|\phi^n - \phi\|_q (\|X\|_p + \|\phi\|_q).
  \end{align*}

  We also note that
  \begin{equation*}
    \widehat{\X}_{s,t} - \X_{s,t} = \int_s^t X_{s,u} \otimes \d \phi_u + \int_s^t \phi_{s,u} \otimes \d X_u + \int_s^t \phi_{s,u} \otimes \d \phi_u
  \end{equation*}
  for $(s,t) \in \Delta_T$, so that, by the standard estimate for Young integrals (see, e.g., \cite[Proposition~2.4]{Friz2018}), we obtain
  \begin{equation*}
    |\widehat{\X}_{s,t} - \X_{s,t}| \lesssim \|X\|_{p,[s,t]} \|\phi\|_{q,[s,t]} + \|\phi\|_{q,[s,t]}^2.
  \end{equation*}
  This implies that, for any partition $\cP$ of the interval $[0,T]$,
  \begin{align*} 
    &\sum_{[s,t] \in \cP} |\widehat{\X}_{s,t} - \X_{s,t}|^{\frac{p}{2}} \lesssim \sum_{[s,t] \in \cP} (\|X\|_{p,[s,t]}^{\frac{p}{2}} \|\phi\|_{q,[s,t]}^{\frac{p}{2}} + \|\phi\|_{q,[s,t]}^p)\\
    &\quad \leq \bigg(\sum_{[s,t] \in \cP} \|X\|_{p,[s,t]}^p\bigg)^{\hspace{-2pt}\frac{1}{2}} \bigg(\sum_{[s,t] \in \cP} \|\phi\|_{q,[s,t]}^p\bigg)^{\hspace{-2pt}\frac{1}{2}} + \sum_{[s,t] \in \cP} \|\phi\|_{q,[s,t]}^p\\
    &\quad \leq \bigg(\sum_{[s,t] \in \cP} \|X\|_{p,[s,t]}^p\bigg)^{\hspace{-2pt}\frac{1}{2}} \bigg(\sum_{[s,t] \in \cP} \|\phi\|_{q,[s,t]}^q\bigg)^{\hspace{-2pt}\frac{p}{2q}} + \bigg(\sum_{[s,t] \in \cP} \|\phi\|_{q,[s,t]}^q\bigg)^{\hspace{-2pt}\frac{p}{q}} \leq \|X\|_p^{\frac{p}{2}} \|\phi\|_q^{\frac{p}{2}} + \|\phi\|_q^p,
  \end{align*}
  so that $\|\widehat{\X} - \X\|_{\frac{p}{2}} \lesssim \|X\|_p \|\phi\|_q + \|\phi\|_q^2$. We thus deduce that
  \begin{equation*}  
    \|\widehat{\bX};\bX\|_{p'} \leq \|\widehat{X} - X\|_p + \|\widehat{\X} - \X\|_{\frac{p}{2}} \lesssim (1 + \|X\|_p + \|\phi\|_q) \|\phi\|_q,
  \end{equation*}
  and combining the estimates above, we obtain the desired error estimate.
\end{proof}

As an immediate consequence of Proposition~\ref{prop:approximate Euler scheme}, if the modified driving signal $\widehat{X}$ converges to the driving signal $X$, then the approximate Euler scheme converges to the solution $Y$ of the RDE~\eqref{eq:RDE Y}. This is made precise in the following corollary, which follows from \eqref{eq:bound on limsup approx Euler}.

\begin{corollary}
Recall the setting of Proposition~\ref{prop:approximate Euler scheme}, and now let $\check{Y}^n$ be the approximate Euler scheme of the RDE \eqref{eq:RDE Y} along the partition $\cP^n$, given by
  \begin{equation*}
    \check{Y}^n_t = y_0 + \sum_{i \hspace{1pt} : \hspace{1pt} t^n_{i+1} \leq t} b(t^n_i,\check{Y}^n_{t^n_i}) (t^n_{i+1} - t^n_i) + \sum_{i \hspace{1pt} : \hspace{1pt} t^n_{i+1} \leq t} \sigma(t^n_i,\check{Y}^n_{t^n_i}) (\check{X}^n_{t^n_{i+1}} - \check{X}^n_{t^n_i})
  \end{equation*}
  for $t \in [0,T]$, with the modified driving signal
  \begin{equation*}
    \check{X}^n := X + \psi^n,
  \end{equation*}
  where $\psi^n \in D^q([0,T];\R^d)$ for some $q \in (1,2)$ such that $\frac{1}{p} + \frac{1}{q} > 1$. If $\|\psi^n\|_q \to 0$ as $n \to \infty$, then
  \begin{equation*}
    \|\check{Y}^n - Y\|_{p'} \, \longrightarrow \, 0 \qquad \text{as} \quad n \, \longrightarrow \, \infty
  \end{equation*}
  for any $p' \in (p,3)$ such that $\frac{1}{p'} + \frac{1}{q} > 1$.
\end{corollary}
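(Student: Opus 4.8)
The plan is to compare $\check Y^n$ directly with the genuine Euler approximation $Y^n$ of the RDE~\eqref{eq:RDE Y} from~\eqref{eq:RDE Euler scheme}, \emph{at the same level $n$}, and to combine this with the convergence $\|Y^n - Y\|_{p'} \to 0$ supplied by Theorem~\ref{thm: Euler scheme convergence}. One cannot simply quote Proposition~\ref{prop:approximate Euler scheme} as a black box, since the perturbation $\psi^n$ now changes with $n$, so that $X + \psi^n$ is a different path at each level and in general satisfies Property \textup{(RIE)} relative to no fixed sequence of partitions. As the Euler scheme depends only on the increments of its driving signal, I would first assume without loss of generality that $\psi^n_0 = 0$, so that $\|\psi^n\|_\infty \le \|\psi^n\|_q \to 0$.

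The first step is to realise both $\check Y^n$ and $Y^n$ as RDEs driven by paths of finite $1$-variation. Let $X^n$ and $\Psi^n$ be the piecewise constant approximations of $X$ and of $\psi^n$ along $\cP^n$; then $X^n + \Psi^n$ is the discretization of $\check X^n = X + \psi^n$ along $\cP^n$ and has finite $1$-variation, and we let $\bX^n = (X^n, \X^n)$ and $\check\bX^n = (X^n + \Psi^n, \check\X^n)$ be the canonical rough path lifts of $X^n$ and $X^n + \Psi^n$ as in~\eqref{eq:defn bbX^n}. Exactly as in the proof of Theorem~\ref{thm: Euler scheme convergence}---whose argument uses only that the driving path is piecewise constant along $\cP^n$ with finite $p$-variation---one checks that $Y^n = \tY^n$, the solution of~\eqref{eq:RDE tilde Y^n}, and that $\check Y^n$ is the solution of the same RDE with $\bX^n$ replaced by $\check\bX^n$. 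Since both RDEs carry the same initial value $y_0$ and the same finite-$1$-variation time-discretization path $\gamma^n$, Theorem~\ref{thm: RDE} yields
\begin{equation*}
  \|\check Y^n - Y^n\|_{p'} \le \|\check Y^n - Y^n\|_p \lesssim \|\bX^n; \check\bX^n\|_p = \|\Psi^n\|_p + \|\X^n - \check\X^n\|_{\frac{p}{2}},
\end{equation*}
provided $\|\bX^n\|_p$, $\|\check\bX^n\|_p$ and $\|\gamma^n\|_r \le T$ remain bounded uniformly in $n$.

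The crux is to show $\|\bX^n; \check\bX^n\|_p \to 0$. Since the $q$-variation of a piecewise constant approximation is dominated by that of the original path, and $p > q$, we get $\|\Psi^n\|_p \le \|\Psi^n\|_q \le \|\psi^n\|_q \to 0$. For the second term I would expand $\check\X^n_{s,t} = \int_s^t (X^n + \Psi^n)_{s,u} \otimes \dd(X^n + \Psi^n)_u$ into $\X^n_{s,t}$ plus the three cross integrals $\int_s^t X^n_{s,u}\otimes\dd\Psi^n_u$, $\int_s^t \Psi^n_{s,u}\otimes\dd X^n_u$ and $\int_s^t \Psi^n_{s,u}\otimes\dd\Psi^n_u$ (well-defined as Young integrals because $\frac{1}{p} + \frac{1}{q} > 1$ and $q < 2$), bound each cross integral over $[s,t]$ by the standard Young estimate, and then apply Cauchy--Schwarz and the embedding $\ell^q \hookrightarrow \ell^p$ to the superadditive quantities $\|X^n\|_{p,[s,t]}^p$ and $\|\Psi^n\|_{q,[s,t]}^q$; this gives $\|\X^n - \check\X^n\|_{\frac{p}{2}} \lesssim \|X\|_p \|\psi^n\|_q + \|\psi^n\|_q^2 \to 0$. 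Together with $\|X^n\|_p \le \|X\|_p$ and Lemma~\ref{lemma: bbX^n p/2var estimate}, which bounds $\|\X^n\|_{\frac{p}{2}}$ uniformly, this also furnishes the uniform bounds on $\|\bX^n\|_p$ and $\|\check\bX^n\|_p$ needed above. Finally $\|Y^n - Y\|_{p'} \to 0$ by Theorem~\ref{thm: Euler scheme convergence}, and the triangle inequality $\|\check Y^n - Y\|_{p'} \le \|\check Y^n - Y^n\|_{p'} + \|Y^n - Y\|_{p'}$ completes the proof.

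I expect the main obstacle to be the uniformity bookkeeping rather than any single estimate: one must keep the implicit constant in Theorem~\ref{thm: RDE} bounded along the sequence, which is exactly what the uniform bound on $\|\bX^n\|_p$, $\|\check\bX^n\|_p$ and $\|\gamma^n\|_r$ achieves, and one must verify that the identification of $\check Y^n$ with an RDE solution---which in the proof of Theorem~\ref{thm: Euler scheme convergence} used Property \textup{(RIE)} for $X$ via Lemma~\ref{lemma: X^n satisfies RIE} and Theorem~\ref{thm:rough int under RIE}---goes through for the piecewise constant path $X^n + \Psi^n$. This last point is harmless, since every piecewise constant path of finite $p$-variation satisfies Property \textup{(RIE)} relative to any refining sequence of partitions with vanishing mesh, by the argument of Lemma~\ref{lemma: X^n satisfies RIE}.
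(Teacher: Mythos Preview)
Your proof is correct and takes a genuinely different route from the paper. The paper derives the corollary as a one-line consequence of the full error estimate in Proposition~\ref{prop:approximate Euler scheme}, applied diagonally with $\phi = \psi^n$ at level~$n$: since the implicit constant there depends on $\phi$ only through $\|\phi\|_\infty$ and $\|\phi\|_q$ (both bounded when $\|\psi^n\|_q \to 0$), every term on the right-hand side vanishes as $n \to \infty$, and in particular the surviving contribution $(1+\|X\|_p+\|\psi^n\|_q)\|\psi^n\|_q$ singled out in~\eqref{eq:bound on limsup approx Euler} tends to zero. You instead bypass Proposition~\ref{prop:approximate Euler scheme} entirely: you compare $\check Y^n$ to the genuine Euler approximation $Y^n$ at the same level via the RDE stability estimate of Theorem~\ref{thm: RDE} applied to the piecewise constant lifts $\bX^n$ and $\check\bX^n$, control $\|\bX^n;\check\bX^n\|_p$ by Young estimates exactly as in the proof of Proposition~\ref{prop:approximate Euler scheme}, and then invoke Theorem~\ref{thm: Euler scheme convergence} for $\|Y^n-Y\|_{p'}\to 0$.

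Your route is arguably cleaner on one point you yourself flag: the paper's diagonal application of Proposition~\ref{prop:approximate Euler scheme} implicitly uses that $X+\psi^n$ satisfies Property~\textup{(RIE)} relative to $(\cP^m)_{m\in\N}$ (so that Theorem~\ref{thm: Euler scheme convergence} can be quoted for it), which via Proposition~\ref{prop: stability of RIE} would need $\|(\psi^n)^m-\psi^n\|_q \to 0$ as $m\to\infty$---a hypothesis not stated in the corollary. Your argument avoids this by working only with the finitely piecewise constant paths $X^n$ and $X^n+\Psi^n$, for which the required RIE-type verification reduces to the elementary argument of Lemma~\ref{lemma: X^n satisfies RIE}, as you note. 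The price is that you re-do by hand the Young-integral bookkeeping for $\|\check\X^n-\X^n\|_{p/2}$ and the uniform bounds on $\|\bX^n\|_p,\|\check\bX^n\|_p$, but this is straightforward and already present in the paper's proofs of Lemma~\ref{lemma: bbX^n p/2var estimate} and Proposition~\ref{prop:approximate Euler scheme}.
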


\begin{remark}\label{remark: absorb young part into drift instead}
  In this section we handled the modified driving signal $X + \phi$ by considering the rough path lift $\widehat{\bX}$ of $\widehat{X} = X + \phi$, and considering the solution $\widehat{Y}$ of the RDE \eqref{eq:RDE Y} driven by $\widehat{\bX}$. An alternative, equally valid approach would be to instead absorb $\phi$ into the drift of the RDE. The resulting equation would not strictly speaking be of the form in \eqref{eq:RDE Y}, but it would still fall into the regime of the more general RDE in \eqref{eq:general RDE}, and an error estimate could be obtained using the stability estimate in Theorem~\ref{thm: RDE}.
\end{remark}

\section{Applications to stochastic differential equations}\label{sec:SDE}

In this section we apply the deterministic theory developed in Section~\ref{sec:RDE}, regarding the Euler scheme for RDEs, to stochastic differential equations (SDEs). For this purpose, we now let $X$ be a $d$-dimensional c{\`a}dl{\`a}g semimartingale, defined on a probability space $(\Omega,\mathcal{F},\P)$ with a filtration $(\mathcal{F}_t)_{t \in [0,T]}$ satisfying the usual conditions, i.e., completeness and right-continuity. We consider the SDE
\begin{equation}\label{eq:SDE}
  Y_t = y_0 + \int_0^t b(s,Y_{s-}) \dd s + \int_0^t \sigma(s,Y_{s-}) \dd X_s, \qquad t \in [0,T],
\end{equation}
where $y_0 \in \R^k$, $b \in C_b^2(\R^{k+1};\R^k)$ and $\sigma \in C^3_b(\R^{k+1};\mathcal{L}(\R^d;\R^k))$, and $\int_0^t \sigma(s,Y_{s-}) \dd X_s$ is defined as an It{\^o} integral. For a comprehensive introduction to stochastic It{\^o} integration and SDEs we refer, e.g., to the textbook \cite{Protter2005}. It is well known that the SDE \eqref{eq:SDE} possesses a unique (strong) solution (see, e.g., \cite[Chapter~V, Theorem~6]{Protter2005}), and that the semimartingale $X$ can be lifted to a random rough path via It{\^o} integration, by defining $\bX = (X,\mathbb{X}) \in \mathcal{D}^{p}([0,T];\R^d)$, $\P$-a.s., for any $p \in (2,3)$, where
\begin{equation}\label{eq:semimartingale lift}
  \mathbb{X}_{s,t} := \int_s^t (X_{r-} - X_s) \otimes \dd X_r = \int_s^t X_{r-} \otimes \dd X_r - X_s \otimes X_{s,t}, \qquad (s,t) \in \Delta_T;
\end{equation}
see \cite[Proposition~3.4]{Liu2018} or \cite[Theorem~6.5]{Friz2018}. It turns out that, if the semimartingale $X$ satisfies Property \textup{(RIE)} relative to $p \in (2,3)$ and a suitable sequence of partitions $(\cP^n)_{n \in \N}$, then the solutions to the SDE \eqref{eq:SDE} and to the RDE \eqref{eq:RDE Y} driven by the random rough path $\bX = (X,\mathbb{X})$ coincide $\P$-almost surely.

\begin{lemma}\label{lem:RDE and SDE agree}
  Let $p \in (2,3)$ and let $\cP^n = \{\tau^n_k\}$, $n \in \N$, be a sequence of adapted partitions (so that each $\tau^n_k$ is a stopping time), such that, for almost every $\omega \in \Omega$, $(\cP^n(\omega))_{n \in \N}$ is a sequence of (finite) partitions of $[0,T]$ with vanishing mesh size. Let $X$ be a c{\`a}dl{\`a}g semimartingale, and suppose that, for almost every $\omega \in \Omega$, the sample path $X(\omega)$ satisfies Property \textup{(RIE)} relative to $p$ and $(\cP^n(\omega))_{n \in \N}$.
  \begin{itemize}
    \item[(i)] The random rough paths $\bX = (X,\X)$, with $\X$ defined pathwise via \eqref{eq:RIE rough path}, and with $\X$ defined by stochastic integration as in \eqref{eq:semimartingale lift}, coincide $\P$-almost surely.
    \item[(ii)] The solution of the SDE \eqref{eq:SDE} driven by $X$, and the solution of the RDE~\eqref{eq:RDE Y} driven by the random rough path $\bX = (X,\X)$, coincide $\P$-almost surely. 
  \end{itemize}
\end{lemma}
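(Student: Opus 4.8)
The plan is to prove both parts by realising the pathwise objects furnished by Property~\textup{(RIE)} and by rough path theory as limits of exactly the same left-point Riemann sums that also define the corresponding It\^o objects, and then to invoke uniqueness of limits. For part~(i), condition~(ii) of Property~\textup{(RIE)} says that, for $\P$-almost every $\omega$, the Riemann sums $\sum_{k} X_{t^n_k} \otimes X_{t^n_k \wedge t, t^n_{k+1} \wedge t}$ converge, uniformly in $t \in [0,T]$, to the pathwise integral $\int_0^\cdot X_u \otimes \d X_u$. On the other hand, $X$ is a c\`adl\`ag semimartingale, so each component $X^i$ is a c\`adl\`ag adapted integrand, and $(\cP^n)_{n\in\N}$ is a sequence of partitions whose points are stopping times and whose mesh size tends to zero; the classical approximation of the It\^o integral by left-point Riemann sums along such partitions (see, e.g., \cite{Protter2005}) shows that these very same sums converge uniformly on compacts in probability (u.c.p.) to the It\^o integral $\int_0^\cdot X_{u-} \otimes \d X_u$. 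Since a.s.\ uniform convergence implies u.c.p.\ convergence, and u.c.p.\ limits are unique up to indistinguishability, we obtain $\int_0^\cdot X_u \otimes \d X_u = \int_0^\cdot X_{u-} \otimes \d X_u$, $\P$-a.s., and comparing \eqref{eq:RIE rough path} with \eqref{eq:semimartingale lift} yields~(i).

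For part~(ii), let $Y$ be the solution of the RDE~\eqref{eq:RDE Y} driven by the random rough path $\bX = (X,\X)$, which is well defined $\P$-a.s.\ by Theorem~\ref{thm: RDE}. I would first record that $Y$ is adapted: by Theorem~\ref{thm: Euler scheme convergence} the Euler approximations $Y^n$ along $(\cP^n)_{n\in\N}$ converge to $Y$ (uniformly, in fact in $p'$-variation), and each $Y^n$ is adapted, being defined recursively in terms of the coefficients and the increments of $X$ along the stopping times $t^n_k$. Next, as noted in the proof of Theorem~\ref{thm: Euler scheme convergence}, each $Y^n$ is piecewise constant along $\cP^n$, so its jump times belong to $\cP^n$; since the uniform convergence $Y^n \to Y$ forces $\Delta Y^n_t \to \Delta Y_t$ for every $t$, the jump times of $Y$—and hence also those of the controlled integrand $(\sigma(\cdot,Y),(\sigma(\cdot,Y))')$—all lie in $\liminf_{n\to\infty}\cP^n$. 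Theorem~\ref{thm:rough int under RIE} then applies and identifies the rough integral $\int_0^t \sigma(s,Y_s)\dd\bX_s$, $t\in[0,T]$, with the a.s.\ uniform (in $t$) limit of the Riemann sums $\sum_k \sigma(t^n_k,Y_{t^n_k}) X_{t^n_k \wedge t, t^n_{k+1}\wedge t}$. Since $s \mapsto \sigma(s,Y_s)$ is c\`adl\`ag adapted and $X$ is a semimartingale, these same sums also converge u.c.p.\ to the It\^o integral $\int_0^t \sigma(s,Y_{s-}) \dd X_s$, so, exactly as in part~(i), $\int_0^\cdot \sigma(s,Y_s)\dd\bX_s = \int_0^\cdot \sigma(s,Y_{s-})\dd X_s$ up to indistinguishability. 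Since $Y$ has at most countably many jumps, $\int_0^\cdot b(s,Y_s)\dd s = \int_0^\cdot b(s,Y_{s-})\dd s$ as well, whence $Y$ solves the It\^o SDE~\eqref{eq:SDE}; by uniqueness of its strong solution (see, e.g., \cite[Chapter~V, Theorem~6]{Protter2005}), $Y$ is indistinguishable from the SDE solution, which proves~(ii).

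The main obstacle I anticipate is bridging the deterministic, pathwise convergence statements (coming from Property~\textup{(RIE)} and Theorem~\ref{thm:rough int under RIE}) with the probabilistic u.c.p.\ convergence of stochastic Riemann sums: one must check that the sums appearing on the two sides are literally identical, that the hypotheses of the classical semimartingale Riemann-sum theorem are satisfied (in particular that the $t^n_k$ are stopping times and that the mesh size tends to zero a.s.), and—most importantly—that the jump times of the controlled integrand $\sigma(\cdot,Y)$ are exhausted by the partitions, so that Theorem~\ref{thm:rough int under RIE} is genuinely applicable. Once these points are secured, uniqueness of limits closes both arguments.
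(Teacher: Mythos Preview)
Your proof of part~(i) is essentially identical to the paper's: both identify the pathwise and It\^o lifts as limits of the same left-point Riemann sums and appeal to uniqueness of (u.c.p.) limits.

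For part~(ii) your overall architecture matches the paper's---show that the rough integral $\int_0^\cdot \sigma(s,Y_s)\dd\bX_s$ is a limit of the same Riemann sums that converge u.c.p.\ to the It\^o integral, then invoke uniqueness of the SDE solution---but you justify the key hypothesis $J_{\sigma(\cdot,Y)} \subseteq \liminf_n \cP^n$ of Theorem~\ref{thm:rough int under RIE} differently. The paper argues directly from the structure of the rough path: since $\X_{t-,t}=0$ for all $t$ (immediate from \eqref{eq:RIE rough path}), the rough integral, and hence $Y$, can jump only where $X$ jumps, so $J_Y \subseteq J_X$; and $J_X \subseteq \liminf_n \cP^n$ by Proposition~\ref{prop: uniform convergence} applied to the uniform convergence $X^n \to X$ built into Property~\textup{(RIE)}. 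You instead invoke Theorem~\ref{thm: Euler scheme convergence} to obtain the uniform convergence $Y^n \to Y$ with $Y^n$ piecewise constant along $\cP^n$, and then use that uniform convergence of c\`adl\`ag paths forces $\Delta Y^n_t \to \Delta Y_t$ pointwise, whence $J_Y \subseteq \liminf_n \cP^n$. Both arguments are correct. The paper's route is slightly more self-contained (it does not call on the Euler-scheme machinery), while yours has the merit of making the adaptedness of $Y$ explicit---a point the paper uses (to form the It\^o integral $\int_0^\cdot \sigma(s,Y_{s-})\dd X_s$) but does not spell out.
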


\begin{proof}
  \emph{(i):} 
  By construction, the pathwise rough integral $\int_0^t X_u(\omega) \otimes \d X_u(\omega)$ constructed via Property \textup{(RIE)} is given by the limit as $n \to \infty$ of left-point Riemann sums:
  \begin{equation}\label{eq:Riemann sums for lift of X}
    \sum_{k=0}^{N_n-1} X_{\tau^n_k(\omega)}(\omega) \otimes X_{\tau^n_k(\omega) \wedge t,\tau^n_{k+1}(\omega) \wedge t}(\omega).
  \end{equation}
It is known that these Riemann sums also converge uniformly in probability to the It{\^o} integral $\int_0^t X_{u-} \otimes \d X_u$ (see, e.g., \cite[Chapter~II, Theorem~21]{Protter2005}), and the result thus follows from the (almost sure) uniqueness of limits.

  \emph{(ii):}
  In the following, we adopt the notation $J_F$ for the set of jump times of a path $F$, and we write $\liminf_{n \to \infty} \cP^n := \bigcup_{m \in \N} \, \bigcap_{n \geq m} \cP^n$.

Let $Y$ be the solution to the RDE \eqref{eq:RDE Y} driven by the random rough path $\bX = (X,\X)$. By the definition of $\X$ in \eqref{eq:RIE rough path}, it is straightforward to see that $\X_{t-,t} = 0$ for every $t \in (0,T]$. It then follows from the definition of rough integration that the integral $t \mapsto \int_0^t \sigma(s,Y_s) \dd \bX_s$ can only have a jump at the jump times of $X$, and it follows that the same is true of the solution $Y$ to the RDE \eqref{eq:RDE Y}, i.e., $J_Y \subseteq J_X$.

  Since the piecewise constant approximation $X^n$ of $X$ along $\cP^n$ converges uniformly to $X$ (by condition~(i) of Property \textup{(RIE)}), we have from Proposition~\ref{prop: uniform convergence} that $J_X \subseteq \liminf_{n \to \infty} \cP^n$. Since $J_Y \subseteq J_X$, we have that $J_Y \subseteq \liminf_{n \to \infty} \cP^n$. It then follows from Theorem~\ref{thm:rough int under RIE} that
  \begin{equation*}
    \int_0^t \sigma(s,Y_s) \dd \bX_s = \lim_{n \to \infty} \sum_{k=0}^{N_n-1} \sigma(\tau^n_k,Y_{\tau^n_k}) X_{\tau^n_k \wedge t,\tau^n_{k+1} \wedge t}.
  \end{equation*}
  Since these Riemann sums also converge in probability to the It{\^o} integral $\int_0^t \sigma(s,Y_{s-}) \dd X_s$ (see, e.g., \cite[Chapter~II, Theorem~21]{Protter2005}), these integrals coincide almost surely. We infer that $Y$ is also a solution of the SDE \eqref{eq:SDE}, which has a unique solution (by, e.g., \cite[Chapter~V, Theorem~6]{Protter2005}).
\end{proof}

As a consequence of Theorem~\ref{thm: Euler scheme convergence} and Lemma~\ref{lem:RDE and SDE agree}, for semimartingales which satisfy Property \textup{(RIE)} relative to a sequence of adapted partitions, the Euler scheme \eqref{eq:RDE Euler scheme} converges pathwise to the solution of the SDE \eqref{eq:SDE}. In the following subsections we verify Property \textup{(RIE)} for various semimartingales relative to suitable sequences of partitions, and derive the pathwise convergence rate of the associated Euler scheme with respect to the $p$-variation norm.

\subsection{Brownian motion}

We start with the most prominent example of a semimartingale, by taking $X = W$ to be a $d$-dimensional Brownian motion $W = (W_t)_{t \in [0,T]}$ with respect to the underlying filtration $(\mathcal{F}_t)_{t \in [0,T]}$.

\begin{proposition}\label{prop: BM satisfies RIE}
  Let $p \in (2,3)$ and let $\cP^n = \{0 = t_0^n < t_1^n < \cdots < t_{N_n}^n = T\}$, $n \in \N$, be a sequence of equidistant partitions of the interval $[0,T]$, so that, for each $n \in \N$, there exists some $\pi_n > 0$ such that $t^n_{i+1} - t^n_i = \pi_n$ for each $0 \leq i < N_n$. If $\pi_n^{2 - \frac{4}{p}} \log(n) \to 0$ as $n \to \infty$, then, for almost every $\omega \in \Omega$, the sample path $W(\omega)$ satisfies Property \textup{(RIE)} relative to $p$ and $(\cP^n)_{n \in \N}$.
\end{proposition}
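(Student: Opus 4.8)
I would verify the three conditions (i)--(iii) of Property \textup{(RIE)} for almost every Brownian path $W(\omega)$, relative to $p$ and $(\cP^n)_{n\in\N}$, taking as canonical lift the It\^o rough path $\bW=(W,\W)$ with $\W_{s,t}=\int_s^t W_{s,r}\otimes\dd W_r$ (It\^o), which is known to belong to $\cD^p([0,T];\R^d)$ almost surely. I would use freely the standard a.s.\ properties of $W$: it is H\"older continuous of every order $\alpha<\tfrac12$ on $[0,T]$, its It\^o area satisfies $|\W_{s,t}|\lesssim|t-s|^{1-\epsilon}$ for every $\epsilon>0$, and, by Brownian scaling together with the independence of increments, the random variables $\{\W_{t^n_j,t^n_{j+1}}\}_j$ are independent, mean-zero, distributed as $\pi_n\W_{0,1}$ with $\W_{0,1}$ possessing moments of all orders, and lie in the second Wiener chaos. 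Note also that $\pi_n^{2-4/p}\log n\to0$ forces $\pi_n\to0$, and, since $2-\tfrac4p<1$, also forces $\pi_n\log n\to0$. Condition (i) is then immediate, since $\|W^n-W\|_\infty\le\sup_{0\le t-s\le\pi_n}|W_{s,t}|\to0$ by uniform continuity of Brownian paths.

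For condition (ii) I would write the Riemann-sum error as a stochastic integral of a small integrand,
\[
  R^n_t:=\int_0^t W^n_u\otimes\dd W_u-\int_0^t W_u\otimes\dd W_u=-\int_0^t(W_u-W_{\gamma^n_u})\otimes\dd W_u,
\]
where $\gamma^n$ is the time-discretisation path of $\cP^n$ and $\int_0^\cdot W_u\otimes\dd W_u$ denotes the It\^o integral. By Doob's inequality and the It\^o isometry, $\E[\|R^n\|_\infty^2]\lesssim\pi_n$, and, since $R^n$ is a sum over the $N_n$ partition intervals of independent, mean-zero, second-chaos contributions of size $\pi_n$, a Bernstein-type concentration estimate gives $\P(\|R^n\|_\infty>\epsilon)\lesssim\exp(-c\epsilon^2/\pi_n)$ for each fixed $\epsilon>0$. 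As $\pi_n\log n\to0$ this is summable in $n$, so Borel--Cantelli yields $\|R^n\|_\infty\to0$ almost surely, which is condition (ii) with the limit equal to the It\^o integral.

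For condition (iii) I need a single control function $w$, a.s.\ finite, with $|W_{s,t}|^p\le w(s,t)$ for all $(s,t)\in\Delta_T$ and $\sup_{n\in\N}\sup_{0\le k<\ell\le N_n}|\Lambda^n_{k,\ell}|^{p/2}\le w(t^n_k,t^n_\ell)$, where $\Lambda^n_{k,\ell}:=\int_{t^n_k}^{t^n_\ell}W^n_u\otimes\dd W_u-W_{t^n_k}\otimes W_{t^n_k,t^n_\ell}=\sum_{j=k}^{\ell-1}W_{t^n_k,t^n_j}\otimes W_{t^n_j,t^n_{j+1}}$. The first requirement is handled by $\|W\|_{p,[s,t]}^p$, a control function finite a.s.\ since $\|W\|_p<\infty$. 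For the second, iterating Chen's relation for $\W$ gives $\Lambda^n_{k,\ell}=\W_{t^n_k,t^n_\ell}-E^n_{k,\ell}$ with $E^n_{k,\ell}:=\sum_{j=k}^{\ell-1}\W_{t^n_j,t^n_{j+1}}=-(R^n_{t^n_\ell}-R^n_{t^n_k})$; since $|\W_{t^n_k,t^n_\ell}|^{p/2}\le\|\W\|_{p/2,[t^n_k,t^n_\ell]}^{p/2}$, everything reduces to the estimate
\[
  \Xi^*:=\sup_{n\in\N}\ \sup_{0\le k<\ell\le N_n}\frac{|E^n_{k,\ell}|}{(t^n_\ell-t^n_k)^{2/p}}<\infty\qquad\text{almost surely.}
\]
Given this, $|E^n_{k,\ell}|^{p/2}\le(\Xi^*)^{p/2}(t^n_\ell-t^n_k)$, and $w(s,t):=C\bigl(\|W\|_{p,[s,t]}^p+\|\W\|_{p/2,[s,t]}^{p/2}+(\Xi^*)^{p/2}(t-s)\bigr)$ is a control function (a sum of control functions) which, for $C$ chosen suitably, satisfies \eqref{eq:RIE inequality}.

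The estimate on $\Xi^*$ is the main obstacle. Since, for fixed $n$, the $\W_{t^n_j,t^n_{j+1}}$ are independent, mean-zero and equal in law to $\pi_n\W_{0,1}$, Rosenthal-/Bernstein-type bounds give $\E[|E^n_{k,\ell}|^q]\lesssim_q\bigl((\ell-k)\pi_n^2\bigr)^{q/2}=\bigl((t^n_\ell-t^n_k)\pi_n\bigr)^{q/2}$ for $q\ge2$; dividing by $(t^n_\ell-t^n_k)^{2q/p}$ and summing over the pairs $(k,\ell)$ at level $n$, grouped by the number $\ell-k$ of subintervals, the sums converge once $q$ is large---precisely because $\tfrac2p>\tfrac12$---and one obtains $\E[\Xi_n^q]\lesssim_q\pi_n^{q(1-2/p)-1}$ for the inner supremum $\Xi_n$. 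Optimising over $q$ turns this into a Gaussian-type tail $\P(\Xi_n>\lambda)\lesssim\exp(-c\lambda^2/\pi_n^{2-4/p})$, up to a polynomial-in-$\pi_n^{-1}$ prefactor, the exponent $2-\tfrac4p=2(1-\tfrac2p)$ being exactly the effective variance scale of $\Xi_n$; summability in $n$ of this tail for each fixed $\lambda$ is then guaranteed by the hypothesis $\pi_n^{2-4/p}\log n\to0$ (equivalently $\pi_n^{-(2-4/p)}\gg\log n$). Borel--Cantelli gives $\limsup_n\Xi_n\le\lambda$ a.s., and since each $\Xi_n$ is a finite maximum, hence a.s.\ finite, this yields $\Xi^*<\infty$ a.s. The genuine difficulty is that the natural fluctuation of $E^n_{k,\ell}$ over an interval of length $\tau$ is of order $\sqrt{\tau\pi_n}$, amplified by logarithmic factors when one maximises over the $O(\pi_n^{-1})$ intervals of that length, and one must check that this is dominated by a fixed multiple of $\tau^{2/p}$ uniformly in $n$; matching the $\sqrt{\pi_n}$-scale of the discretisation error against a control function that does not depend on $n$ is exactly what the quantitative mesh hypothesis is tailored to achieve.
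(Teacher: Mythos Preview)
Your approach and the paper's are built on the same core estimate but packaged differently. The paper does not verify conditions (i)--(iii) of Property~(RIE) directly; instead it shows that, almost surely, the It\^o lift $\bW=(W,\W)$ satisfies Davie's criterion (cf.\ Remark~\ref{rem: Davie's criterion}), namely that
\[
\Big|\sum_{m=k}^{\ell-1}\W^{ij}_{t^n_m,t^n_{m+1}}\Big|\le C(\ell-k)^\beta\pi_n^{2/p}
\]
for some $\beta\in(1-\tfrac1p,\tfrac2p)$, all $n$, and all $0\le k<\ell\le N_n$, and then invokes \cite[Appendix~B]{Perkowski2016} for the implication ``Davie $\Rightarrow$ (RIE)'' along equidistant partitions. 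Your quantity $E^n_{k,\ell}=\sum_{m=k}^{\ell-1}\W_{t^n_m,t^n_{m+1}}$ is exactly the sum in Davie's criterion, and since $(t^n_\ell-t^n_k)^{2/p}=(\ell-k)^{2/p}\pi_n^{2/p}$ your condition $\Xi^*<\infty$ is equivalent to Davie's bound with $\beta=\tfrac2p$ (and is implied by it for any $\beta<\tfrac2p$); so the decisive estimate is identical, and your verification of (i)--(iii) is effectively a reproof, in the Brownian case, of the content of \cite[Appendix~B]{Perkowski2016}. On the concentration step itself, the paper argues that the i.i.d.\ random variables $\W^{ij}_{t^n_m,t^n_{m+1}}\stackrel{d}{=}\pi_n\W^{ij}_{0,1}$ are sub-Gaussian with $\psi_2$-norm $C\pi_n$ (via the moment growth $\|\W^{ij}_{0,1}\|_{L^q}\lesssim\sqrt q$ drawn from \cite{Friz2010}) and applies Hoeffding's inequality to obtain the tail $\exp(-(\ell-k)^{2\beta-1}/\pi_n^{2-4/p})$, which after a union bound and Borel--Cantelli is summable exactly under $\pi_n^{2-4/p}\log n\to0$; your Rosenthal-moment argument, optimized over $q$, is the moment-space version of the same computation. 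What your direct route buys is self-containment (no appeal to the Davie $\Rightarrow$ (RIE) black box, and an explicit control function); what the paper's route buys is brevity, since (i), (ii) and the construction of $w$ come for free once Davie's criterion is in hand.
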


\begin{proof}
  As stated in Remark~\ref{rem: Davie's criterion}, Davie's condition implies Property \textup{(RIE)}. While \cite[Appendix~B]{Perkowski2016} shows this for the sequence of partitions $(\cP_U^n)_{n\in\N}$, where $\cP^n_U = \{\frac{iT}{n} : i = 0, 1, \ldots, n\}$, i.e.~$\pi_n = \frac{T}{n}$, their proof actually holds for any sequence of equidistant partitions of the interval $[0,T]$. We therefore show the necessary condition proposed in \cite{Davie2008}, under the assumption that $\pi_n^{2-\frac{4}{p}} \log(n) \to 0$ as $n \to \infty$.

  More precisely, let $\mathbf{W} = (W,\mathbb{W})$ be the It\^o Brownian rough path lift of $W$. Write $\alpha := \frac{1}{p}$ and let $\beta \in (1 - \alpha,2\alpha)$. We show that, almost surely, there exists a constant $C > 0$ such that
  \begin{equation*}
    \Big|\sum_{m=k}^{\ell - 1} \mathbb{W}_{{t_m^n},t_{m+1}^n}^{ij}\Big| \leq C (\ell - k)^\beta \pi_n^{2\alpha},
  \end{equation*}
  for every $i, j = 1, \ldots, d$ and $n \in \N$, whenever $0 < k < \ell$ are integers such that $\ell \pi_n \leq T$.

  \emph{Step 1.} We recall that a (zero mean) random variable $Z$ is said to be \emph{sub-Gaussian} if its sub-Gaussian norm $\|Z\|_{\psi_2} := \inf\{z>0: \E[\exp(Z^2/z^2)] \leq 2\}$ is finite. It is well known that the sub-Gaussian property admits an equivalent formulation; namely, $Z$ is sub-Gaussian if and only if $\E[\exp(\lambda^2 Z^2)] \leq \exp(\lambda^2 K^2)$ holds for all $\lambda$ such that $|\lambda| \leq \frac{1}{K}$, for some $K > 0$. In this case we have $\|Z\|_{\psi_2} = K$ up to a multiplicative constant.

  We will prove that $\mathbb{W}_{t_m^n,{t_{m+1}^n}}^{ij}$, $m = k, \ldots, \ell-1$, are independent sub-Gaussian random variables with sub-Gaussian norm $\|\mathbb{W}_{t_m^n,t_{m+1}^n}^{ij}\|_{\psi_2} = C \pi_n$ for some $C > 0$.

  First, we note that, by \cite[Proposition~13.4]{Friz2010}, for all $m \in \N$, the random variables
  \[\frac{\mathbb{W}^{ij}_{t^n_m,t^n_{m+1}}}{t^n_{m+1} - t^n_m}\]
  are independent and identically distributed, with the same distribution as $\mathbb{W}^{ij}_{0,1}$, and that the latter satisfies $\E[\exp(\eta \mathbb{W}^{ij}_{0,1})] < \infty$ for some sufficiently small $\eta > 0$, which is equivalent to the Gaussian tail property, i.e., that $\|\mathbb{W}^{ij}_{0,1}\|_{L^q} \leq c \sqrt{q}$ for all $q \geq 1$, where the constant $c$ is independent of $q$; see \cite[Lemma~A.17]{Friz2010}. As a consequence, using the fact that $t^n_{m+1} - t^n_m = \pi_n$ for all $m$, and setting $q = 2\nu$, we deduce that
  \begin{equation}\label{eq: moment bound Brownian rough lift}
    \E[|\mathbb{W}_{t_m^n,t_{m+1}^n}^{ij}|^{2\nu}] \leq c^\nu \nu^\nu \pi_n^{2 \nu}, \qquad \nu \in \N,
  \end{equation}
  for a new constant $c > 0$ which does not depend on $\nu$.

  We now aim to show that there exists a constant $C > 0$ such that
  \begin{equation}\label{eq: moment generating function}
    \E[\exp(\lambda^2 (\mathbb{W}_{t_m^n,t_{m+1}^n}^{ij})^2)] \leq \exp(C^2 \pi_n^2 \lambda^2),
  \end{equation}
  for all $\lambda$ such that $|\lambda| \leq \frac{1}{C \pi_n}$, which then implies that $\mathbb{W}_{t_m^n,t_{m+1}^n}^{ij}$ is sub-Gaussian with norm $\|\mathbb{W}_{t_m^n,t_{m+1}^n}^{ij}\|_{\psi_2} = C \pi_n$, up to a multiplicative constant which we may then absorb into $C$. Using the Taylor expansion for the exponential function, we get, for $\lambda \in \R$, that
  \begin{equation*}
    \E[\exp(\lambda^2 (\mathbb{W}_{t_m^n,t_{m+1}^n}^{ij})^2)] = \E \bigg[1 + \sum_{\nu=1}^\infty \frac{\lambda^{2\nu} (\mathbb{W}_{t_m^n,t_{m+1}^n}^{ij})^{2\nu}}{\nu!} \bigg] = 1 + \sum_{\nu=1}^\infty \frac{\lambda^{2\nu} \E[(\mathbb{W}_{t_m^n,t_{m+1}^n}^{ij})^{2\nu}]}{\nu!}.
  \end{equation*}
  By the bound in \eqref{eq: moment bound Brownian rough lift} and Stirling's approximation (which implies in particular that $\nu! \geq (\frac{\nu}{e})^\nu$ for all $\nu \geq 1$), we obtain
  \begin{equation*}
    \E[\exp(\lambda^2 (\mathbb{W}_{t_m^n,t_{m+1}^n}^{ij})^2)] \leq 1 + \sum_{\nu=1}^\infty (e c\lambda^2 \pi_n^2)^\nu = \frac{1}{1 - ec\lambda^2 \pi_n^2} \leq \exp (2ec \lambda^2 \pi_n^2),
  \end{equation*}
  which is valid provided that
  \begin{equation}\label{eq:lambda bound}
    ec \lambda^2 \pi_n^2 \leq \frac{1}{2},
  \end{equation}
  since $\frac{1}{1-x} \leq \exp(2x)$ for $x \in [0,\frac{1}{2}]$. We then obtain \eqref{eq: moment generating function} by choosing $C = \sqrt{2ec}$, and note that then \eqref{eq:lambda bound} does indeed hold when $|\lambda| \leq \frac{1}{C \pi_n}$.

  \emph{Step 2.} Let $C > 0$ be the constant found above, so that $\|\mathbb{W}_{t_m^n,t_{m+1}^n}^{ij}\|_{\psi_2} = C\pi_n$. Then Hoeffding's inequality (see, e.g., \cite[Theorem~2.6.2]{Vershynin2018}) gives
  \begin{align*}
    \P \bigg(\bigg|\sum_{m=k}^{\ell-1} \mathbb{W}_{t_m^n,t_{m+1}^n}^{ij}\bigg| \geq C (\ell - k)^\beta \pi_n^{2\alpha}\bigg) &\leq 2 \exp \bigg(-\frac{C^2 (\ell - k)^{2\beta} \pi_n^{4\alpha}}{\sum_{m=k}^{\ell-1} \|\mathbb{W}_{t_m^n,t_{m+1}^n}^{ij}\|_{\psi_2}^2}\bigg)\\
    &= 2 \exp \bigg(-\frac{(\ell - k)^{2\beta - 1}}{{\pi_n}^{2 - 4\alpha}}\bigg).
  \end{align*}
  Since $\beta > 1 - \alpha > \frac{1}{2}$, we can bound this further by
  \begin{equation*}
    \P \bigg(\bigg|\sum_{m=k}^{\ell-1} \mathbb{W}_{t_m^n,t_{m+1}^n}^{ij}\bigg| \geq C (\ell - k)^\beta \pi_n^{2\alpha}\bigg) \leq 2 \exp \Big(-\frac{1}{{\pi_n}^{2-4\alpha}}\Big) = 2 n^{-\frac{1}{\gamma_n}},
  \end{equation*}
  where we denote $\gamma_n = {\pi_n}^{2-4\alpha} \log(n)$. Since, by assumption, $\gamma_n \to 0$ as $n \to \infty$, we have that $\frac{1}{\gamma_n} > 1$ for all sufficiently large $n \in \N$, and hence that the series $\sum_{n \in \N} n^{-\frac{1}{\gamma_n}}$ is absolutely convergent. The desired statement then follows from the Borel--Cantelli lemma.
\end{proof}

\begin{remark}
  Proposition~\ref{prop: BM satisfies RIE} can be generalized to any sequence of partitions $(\cP^n)_{n \in \N}$, which possibly consists of non-equidistant partitions, such that $|\cP^n|^{2-\frac{4}{p}} \log(n) \to 0$ as $n \to \infty$, provided that there exists a positive number $\eta > 0$ such that
  \begin{equation*}
    \frac{|\cP^n|}{\min_{0 \leq k < N_n} |t^n_{k+1} - t^n_k|} \leq \eta
  \end{equation*}
  for every $n \in \N$. This additional condition requires that the sequence $(\cP^n)_{n \in \N}$ is a ``balanced partition sequence'' in the sense of \cite{Cont2023}.
\end{remark}

\begin{remark}
  Combining Proposition~\ref{prop: BM satisfies RIE} with Lemma~\ref{lem:rough path approximation}, we infer that the piecewise constant approximations of a Brownian motion along equidistant partitions converge to its It\^o rough path lift, which, as far as we are aware, is a novel construction of this lift. Existing approximations of Brownian rough path are all continuous approximations, such as piecewise linear or mollifier approximations---cf.~\cite{Friz2010}---which play a crucial role, e.g., in the rough path based proofs of Wong--Zakai results, support theorems and large deviation principles.
\end{remark}

\begin{corollary}\label{cor: convergence rate for Brownian motion}
  Let $p \in (2,3)$ and let $\cP_U^n = \{0 = t_0^n < t_1^n < \cdots < t_n^n = T\}$, $n \in \N$, with $t_i^n = \frac{iT}{n}$, be the sequence of equidistant partitions with width $\frac{T}{n}$ of the interval $[0,T]$. Let $Y$ be the solution of the SDE \eqref{eq:SDE} driven by a Brownian motion $W$, and let $Y^n$ be the corresponding Euler approximation along $\cP_U^n$, as defined in \eqref{eq:RDE Euler scheme}. For any $p' \in (p,3)$, $q \in (1,2)$ and $\beta \in (1 - \frac{1}{p},\frac{2}{p})$ such that $\frac{1}{p'} + \frac{1}{q} > 1$, there exists a random variable $C$, which does not depend on $n$, such that
  \begin{equation}\label{eq:Brownian rate convergence}
    \|Y^n - Y\|_{p'} \leq C (n^{-(1-\frac{1}{q})} + n^{-(\frac{2}{p}-\beta)(1-\frac{p}{p'})}), \qquad n \in \N.
  \end{equation}
\end{corollary}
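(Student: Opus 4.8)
The plan is to verify the hypotheses of Theorem~\ref{thm: Euler scheme convergence} for $X = W$ along $(\cP_U^n)_{n \in \N}$, to identify the resulting RDE solution with the solution of the SDE~\eqref{eq:SDE}, and then to bound each of the three terms on the right-hand side of the rate estimate~\eqref{eq:est rate of convergence}. Since $\pi_n = \frac{T}{n}$ and $2 - \frac{4}{p} > 0$ for $p \in (2,3)$, we have $\pi_n^{2 - \frac{4}{p}} \log(n) = (T/n)^{2 - \frac{4}{p}} \log(n) \to 0$ as $n \to \infty$, so Proposition~\ref{prop: BM satisfies RIE} ensures that, for almost every $\omega \in \Omega$, the path $W(\omega)$ satisfies Property \textup{(RIE)} relative to $p$ and $(\cP_U^n)_{n \in \N}$. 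As the partitions $\cP_U^n$ are deterministic, hence adapted, and $W$ is a continuous semimartingale, Lemma~\ref{lem:RDE and SDE agree} shows that the canonical rough path lift of $W$ coincides almost surely with its It\^o lift $\bW = (W,\W)$, and that the solution $Y$ of the SDE~\eqref{eq:SDE} coincides almost surely with the solution of the RDE~\eqref{eq:RDE Y} driven by $\bW$. Applying Theorem~\ref{thm: Euler scheme convergence} pathwise on this almost sure event then yields
\begin{equation*}
  \|Y^n - Y\|_{p'} \leq C_\omega \bigg( |\cP_U^n|^{1 - \frac{1}{q}} + \|W^n - W\|_\infty^{1 - \frac{p}{p'}} + \bigg\| \int_0^\cdot W^n_u \otimes \d W_u - \int_0^\cdot W_u \otimes \d W_u \bigg\|_\infty^{1 - \frac{p}{p'}} \bigg),
\end{equation*}
where $C_\omega$ is a random variable that does not depend on $n$ (it depends, in particular, on the value $w(0,T)$ of the random control function from Property \textup{(RIE)}).

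It then remains to estimate the three terms, with all implicit constants allowed to be $\omega$-dependent from now on. The first is immediate: $|\cP_U^n|^{1 - \frac{1}{q}} = (T/n)^{1 - \frac{1}{q}} \lesssim n^{-(1 - \frac{1}{q})}$. Set $\gamma := \frac{2}{p} - \beta$; from $\beta \in (1 - \frac{1}{p}, \frac{2}{p})$ and $p \in (2,3)$ one checks the elementary bounds $0 < \gamma < \frac{1}{2}$. Since $W$ is almost surely $\gamma$-H\"older continuous and $W^n_t$ equals $W$ evaluated at the left endpoint of the cell of $\cP_U^n$ containing $t$, we get $\|W^n - W\|_\infty \lesssim |\cP_U^n|^\gamma \lesssim n^{-\gamma}$, hence $\|W^n - W\|_\infty^{1 - \frac{p}{p'}} \lesssim n^{-(\frac{2}{p} - \beta)(1 - \frac{p}{p'})}$. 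For the third term, a direct computation expanding both integrals over the cells of $\cP_U^n$ and using $\int_s^u W_r \otimes \d W_r = \W_{s,u} + W_s \otimes W_{s,u}$ shows that, for $t \in [t^n_m, t^n_{m+1})$,
\begin{equation*}
  \int_0^t W^n_u \otimes \d W_u - \int_0^t W_u \otimes \d W_u = - \sum_{k=0}^{m-1} \W_{t^n_k, t^n_{k+1}} - \W_{t^n_m, t}.
\end{equation*}
By Davie's condition, as established in the proof of Proposition~\ref{prop: BM satisfies RIE}, we have $\big| \sum_{k=1}^{m-1} \W_{t^n_k, t^n_{k+1}} \big| \lesssim m^\beta (T/n)^{\frac{2}{p}} \leq n^\beta (T/n)^{\frac{2}{p}} \lesssim n^{-(\frac{2}{p} - \beta)}$ uniformly over $m \leq n$, while the leftover summand $\W_{t^n_0, t^n_1}$ and the within-cell remainder $\W_{t^n_m, t}$, being increments of $\W$ over intervals of length at most $T/n$, satisfy $|\W_{t^n_0, t^n_1}| + |\W_{t^n_m, t}| \lesssim (T/n)^{2\gamma} \lesssim n^{-\gamma}$, using that $|\W_{s,u}| \lesssim |u - s|^{2\gamma}$ almost surely (the standard H\"older regularity of the It\^o Brownian rough path, valid since $\gamma < \frac{1}{2}$) together with $2\gamma \geq \gamma$. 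Hence the third term is also $\lesssim n^{-(\frac{2}{p} - \beta)(1 - \frac{p}{p'})}$. Summing the three bounds and absorbing all of the finite, $n$-independent random constants into a single random variable $C$ gives~\eqref{eq:Brownian rate convergence}.

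The only mildly delicate point is the within-cell remainder $\W_{t^n_m, t}$: it is not an on-grid increment, so it is not covered by Davie's condition from Proposition~\ref{prop: BM satisfies RIE} and must instead be controlled via the almost sure H\"older regularity of the It\^o Brownian rough path. Everything else is routine exponent bookkeeping, the essential observations being that $\frac{2}{p} - \beta$ lies in $(0, \frac{1}{2})$ — so it is an admissible H\"older exponent for Brownian motion — and that $2(\frac{2}{p} - \beta) \geq \frac{2}{p} - \beta$, so that the second-level H\"older estimates at scale $T/n$ are no weaker than the target rate $n^{-(\frac{2}{p} - \beta)}$.
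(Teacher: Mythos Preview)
Your proof is correct and follows essentially the same approach as the paper: verify Property \textup{(RIE)} via Proposition~\ref{prop: BM satisfies RIE}, invoke Theorem~\ref{thm: Euler scheme convergence}, and bound the three terms of~\eqref{eq:est rate of convergence}. The only notable difference is in the third term: the paper simply cites \cite[Appendix~B]{Perkowski2016} for the rate $n^{-(\frac{2}{p}-\beta)}$, whereas you rederive it by decomposing the error as $-\sum_k \W_{t^n_k,t^n_{k+1}} - \W_{t^n_m,t}$ and combining Davie's estimate (already established inside the proof of Proposition~\ref{prop: BM satisfies RIE}) with the H\"older regularity of $\W$ for the off-grid remainder. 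This makes your argument more self-contained. Two minor remarks: first, there is no need to split off $\W_{t^n_0,t^n_1}$ separately, since Davie's bound applies to the full sum $\sum_{k=0}^{m-1}$; second, for the middle term the paper uses the sharper $\frac{1}{p}$-H\"older bound $\|W^n-W\|_\infty \lesssim n^{-1/p}$ and then absorbs it (noting $\frac{1}{p} > \frac{2}{p}-\beta$), while you use the coarser $\gamma$-H\"older bound directly---both routes are fine.
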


\begin{proof}
  Since $|\cP_U^n| = \frac{T}{n}$, we have that $|\cP_U^n|^{2-\frac{4}{p}} \log(n) \to 0$ as $n \to \infty$. Thus, by Proposition~\ref{prop: BM satisfies RIE}, for almost every $\omega \in \Omega$, the sample path $W(\omega)$ satisfies Property \textup{(RIE)} relative to $p$ and $(\cP_U^n)_{n \in \N}$, which allows us to apply the result of Theorem~\ref{thm: Euler scheme convergence}.

  Since the sample paths of $W$ are almost surely $\frac{1}{p}$-H\"older continuous, it is easy to see that
  \begin{equation*}
    \|W^n - W\|_\infty \lesssim n^{-\frac{1}{p}}, \qquad n \in \N,
  \end{equation*}
  where the implicit multiplicative constant is a random variable which does not depend on $n$. Moreover, by \cite[Appendix~B]{Perkowski2016}, the left-point Riemann sums along $(\cP_U^n)_{n \in \N}$ converge uniformly as $n \to \infty$, with rate $n^{-(\frac{2}{p}-\beta)}$ for $\beta \in (1 - \frac{1}{p},\frac{2}{p})$, i.e.,
  \begin{equation*}
    \bigg\|\int_0^\cdot W_u^n \otimes \d W_u - \int_0^\cdot W_u \otimes \d W_u\bigg\|_\infty \lesssim n^{-(\frac{2}{p}-\beta)}, \qquad n \in \N.
  \end{equation*}
  Hence, by Theorem~\ref{thm: Euler scheme convergence}, we get that
  \begin{equation*}
    \|Y^n - Y\|_{p'} \lesssim n^{-(1-\frac{1}{q})} + n^{-\frac{1}{p}(1-\frac{p}{p'})} + n^{-(\frac{2}{p}-\beta)(1-\frac{p}{p'})}.
  \end{equation*}
  Since $\frac{1}{p} < 1 - \frac{1}{p} < \beta$ for $p \in (2,3)$, this gives the rate of convergence in \eqref{eq:Brownian rate convergence}.
\end{proof}

\subsection{It{\^o} processes}

In this subsection we let $X$ be an It{\^o} process. More precisely, we suppose that
\begin{equation}\label{eq: Ito process}
  X_t = x_0 + \int_0^t b_r \dd r + \int_0^t H_r \dd W_r, \qquad t \in [0,T],
\end{equation}
for some $x_0 \in \R^d$, and some locally bounded predictable integrands $b \colon \Omega \times [0,T] \to \R^d$ and $H \colon \Omega \times [0,T] \to \cL(\R^m;\R^d)$, where $W$ is an $\R^m$-valued Brownian motion. We consider the sequence of dyadic partitions $(\mathcal{P}^n_D)_{n \in \N}$ of $[0,T]$, given by
\begin{equation}\label{eq: dyadic partitions}
  \cP^n_D := \{0 = t_0^n < t_1^n < \cdots < t_{2^n}^n = T\} \qquad \text{with} \quad t_k^n := k 2^{-n} T \quad \text{for} \quad k = 0, 1, \ldots, 2^n.
\end{equation}

In the next proposition we will show that $X$ satisfies Property \textup{(RIE)} along the sequence of partitions $(\cP^n_D)_{n \in \N}$, and establish the rate of convergence of the associated Euler scheme. Note that, in contrast to the proof of Proposition~\ref{prop: BM satisfies RIE}, for general It{\^o} processes we cannot rely on the concentration of measure inequality for sub-Gaussian distributions.

\begin{proposition}\label{prop: Ito diffusion has RIE}
  Let $p \in (2,3)$ and let $X$ be an It{\^o} process of the form in \eqref{eq: Ito process}. Let $Y$ be the solution of the SDE \eqref{eq:SDE} driven by $X$, and let $Y^n$ denote the corresponding Euler approximation, as defined in \eqref{eq:RDE Euler scheme}, based on $X$ and the sequence of dyadic partitions $(\cP_D^n)_{n \in \N}$.
  \begin{itemize}
    \item[(i)] For almost every $\omega \in \Omega$, the sample path $X(\omega)$ satisfies Property \textup{(RIE)} relative to $p$ and $(\cP_D^n)_{n \in \N}$.
    \item[(ii)] For any $p' \in (p,3)$ and $q \in (1,2)$ such that $\frac{1}{p'} + \frac{1}{q} > 1$, and any $\epsilon \in (0,1)$, there exists a random variable $C$, which does not depend on $n$, such that
    \begin{equation}\label{eq: Ito proc p' bound}
      \|Y^n - Y\|_{p'} \leq C (2^{-n(1-\frac{1}{q})} + 2^{-n(\frac{1}{p}-\frac{1}{p'})} + 2^{-\frac{n}{2}(1-\epsilon)(1-\frac{p}{p'})}), \qquad n \in \N,
    \end{equation}
    and
    \begin{equation}\label{eq: Ito proc 3 bound}
      \|Y^n - Y\|_3 \leq C 2^{-n(\frac{1}{6}-\epsilon)}, \qquad n \in \N.
    \end{equation}
  \end{itemize}
\end{proposition}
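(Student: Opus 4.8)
The plan is as follows. Once part~(i) is available, Lemma~\ref{lem:RDE and SDE agree} identifies $Y$ with the solution of the RDE~\eqref{eq:RDE Y} driven by the canonical It\^o rough path lift of $X$, so part~(ii) follows from Theorem~\ref{thm: Euler scheme convergence} by bounding the three terms on the right-hand side of \eqref{eq:est rate of convergence}; the real work is part~(i), i.e.\ verifying conditions (i)--(iii) of Property \textup{(RIE)} relative to $p$ and $(\cP^n_D)_{n\in\N}$ for almost every $\omega$. I would begin by localising to bounded coefficients: choosing stopping times $\tau_R\uparrow T$ a.s.\ with $b\1_{[0,\tau_R]}$ and $H\1_{[0,\tau_R]}$ bounded, on $\{\tau_R=T\}$ the path $X$ equals the It\^o process $X^R:=X_{\cdot\wedge\tau_R}$, which has bounded coefficients, and $\bigcup_R\{\tau_R=T\}$ has full probability; so one may assume $b,H$ bounded. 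Write $X=D+M$ with $D_t=x_0+\int_0^t b_r\dd r$ of finite $1$-variation and $M_t=\int_0^t H_r\dd W_r$ a continuous martingale, and let $\X$ be the It\^o lift~\eqref{eq:semimartingale lift}. The Burkholder--Davis--Gundy inequality yields, for every $\nu\in\N$, the moment bounds $\E[|X_{s,t}|^{2\nu}]\lesssim|t-s|^{\nu}$ and $\E[|\X_{s,t}|^{2\nu}]\lesssim|t-s|^{2\nu}$. Moreover, for partition points $s=t^n_k<t^n_\ell=t$, the ``area defect'' in \eqref{eq:RIE inequality} is
\[
  S^n_{k,\ell}:=\int_{t^n_k}^{t^n_\ell}X^n_u\otimes\dd X_u-X_{t^n_k}\otimes X_{t^n_k,t^n_\ell}=\sum_{j=k}^{\ell-1}X_{t^n_k,t^n_j}\otimes X_{t^n_j,t^n_{j+1}},
\]
and, splitting $X_{t^n_k,r}=X_{t^n_k,t^n_j}+X_{t^n_j,r}$ on each $[t^n_j,t^n_{j+1}]$, this equals $\X_{t^n_k,t^n_\ell}-\sum_{j=k}^{\ell-1}\X_{t^n_j,t^n_{j+1}}$. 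Since the leading part $\sum_{j}\X^{MM}_{t^n_j,t^n_{j+1}}$ of the last sum (the ``$M$-against-$M$'' part of $\X$) is a sum of martingale increments, a discrete Burkholder inequality followed by Minkowski's inequality gives $\E\big[\big|\sum_{j=k}^{\ell-1}\X^{MM}_{t^n_j,t^n_{j+1}}\big|^{2\nu}\big]\lesssim\big(\sum_{j}\|\X^{MM}_{t^n_j,t^n_{j+1}}\|_{L^{2\nu}}^{2}\big)^{\nu}\lesssim\big((t^n_\ell-t^n_k)\,2^{-n}\big)^{\nu}$; the extra factor $2^{-n}$ is the gain from the dyadic mesh, and the remaining parts $\X^{DM},\X^{MD},\X^{DD}$ of $\X$ contribute terms of strictly higher order, handled by analogous (or elementary Stieltjes) estimates.

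Conditions (i) and (ii) are then quick. For (i): $X$ is continuous, so $X^n\to X$ uniformly, and applying Kolmogorov's continuity criterion to $\E[|X_{s,t}|^{2\nu}]\lesssim|t-s|^\nu$ shows that $X$ is a.s.\ $\tfrac1p$-H\"older (using $\tfrac1p<\tfrac12$), whence $\|X^n-X\|_\infty\lesssim 2^{-n/p}$. For (ii): the Riemann sum $\int_0^t X^n_u\otimes\dd X_u$ equals the It\^o integral $\int_0^t X_{u-}\otimes\dd X_u$ plus $\int_0^t(X^n_u-X_u)\otimes\dd X_u$, and It\^o's isometry together with Doob's maximal inequality give $\big\|\sup_{t\le T}\big|\int_0^t(X^n_u-X_u)\otimes\dd X_u\big|\,\big\|_{L^{2\nu}}\lesssim 2^{-n/2}$; since $\sum_n 2^{-2\nu n\epsilon}<\infty$ for any $\epsilon>0$, the Borel--Cantelli lemma gives a.s.\ uniform convergence of the Riemann sums, at rate $2^{-n(\frac12-\epsilon)}$.

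Condition (iii) is the main obstacle, since it requires a single control function dominating the area defects $S^n_{k,\ell}$ simultaneously for all $n$. From the moment bounds, a Kolmogorov-type $p$-variation criterion shows that a.s.\ $X$ has finite $p$-variation and $\X$ finite $\tfrac p2$-variation, so $w_1(s,t):=\|X\|_{p,[s,t]}^p$ and $w_2(s,t):=\|\X\|_{\frac p2,[s,t]}^{p/2}$ are finite control functions with $|X_{s,t}|^p\le w_1(s,t)$ and, by the identity above, $|S^n_{k,\ell}|^{p/2}\lesssim w_2(t^n_k,t^n_\ell)+\big|\sum_{j=k}^{\ell-1}\X_{t^n_j,t^n_{j+1}}\big|^{p/2}$; it thus remains to control the last term, uniformly in $n$, by a control function. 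Applying a quantitative Kolmogorov criterion to the additive increments $(k,\ell)\mapsto\sum_{j=k}^{\ell-1}\X_{t^n_j,t^n_{j+1}}$ and the $L^{2\nu}$-bound above yields, for each $n$, a random variable $Z_n$ with $\big|\sum_{j=k}^{\ell-1}\X_{t^n_j,t^n_{j+1}}\big|\le Z_n(t^n_\ell-t^n_k)^{\beta}$ for all $k<\ell$, where $\beta=\tfrac12-\tfrac1{2\nu}$ and $\E[Z_n^{2\nu}]\lesssim 2^{-n\nu}$. Since the dyadic mesh points satisfy $t^n_\ell-t^n_k\ge 2^{-n}T$ and $\beta p/2-1<0$, we get $(t^n_\ell-t^n_k)^{\beta}\le(2^{-n}T)^{\beta-1}(t^n_\ell-t^n_k)$, hence $\big|\sum_{j}\X_{t^n_j,t^n_{j+1}}\big|^{p/2}\le Z_n^{p/2}(2^{-n}T)^{(\beta-1)p/2}(t^n_\ell-t^n_k)$. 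Because $\tfrac2p<1$, one may take $\nu$ large enough that $\tfrac2p-\beta<\tfrac12$; choosing $\kappa\in(\tfrac2p-\beta,\tfrac12)$, the bound $\E[Z_n^{2\nu}]\lesssim 2^{-n\nu}$ makes $\sum_n\P(Z_n>2^{-n\kappa})$ finite, so by Borel--Cantelli $Z_n\le 2^{-n\kappa}$ for all large $n$ a.s., and then (using $\kappa+\beta>\tfrac2p$) $C_\star:=\sup_n Z_n^{p/2}(2^{-n}T)^{(\beta-1)p/2}<\infty$ a.s. Consequently $|S^n_{k,\ell}|^{p/2}\lesssim(w_1+w_2+w_3)(t^n_k,t^n_\ell)$ with $w_3(s,t):=C_\star(t-s)$, and condition (iii) holds with $w:=c(w_1+w_2+w_3)$ for a suitable deterministic constant $c$ (so that the left-hand side of \eqref{eq:RIE inequality} is $\le 1$). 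This proves part~(i).

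For part~(ii), combining part~(i), Lemma~\ref{lem:RDE and SDE agree} and Theorem~\ref{thm: Euler scheme convergence}, and substituting $|\cP^n_D|=2^{-n}T$, $\|X^n-X\|_\infty\lesssim 2^{-n/p}$, and the Riemann-sum rate $2^{-n(\frac12-\epsilon)}$ into \eqref{eq:est rate of convergence}, gives $\|Y^n-Y\|_{p'}\lesssim 2^{-n(1-\frac1q)}+2^{-n(\frac1p-\frac1{p'})}+2^{-\frac n2(1-\epsilon)(1-\frac p{p'})}$, i.e.\ \eqref{eq: Ito proc p' bound}, all implicit constants being $n$-independent random variables. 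Finally, \eqref{eq: Ito proc 3 bound} follows from $\|\,\cdot\,\|_3\le\|\,\cdot\,\|_{p'}$ for $p'<3$ by optimising \eqref{eq: Ito proc p' bound}: letting $p\downarrow 2$, $p'\uparrow 3$, $q\uparrow\tfrac32$ and $\epsilon\downarrow 0$ drives the three exponents $1-\tfrac1q$, $\tfrac1p-\tfrac1{p'}$ and $\tfrac12(1-\epsilon)(1-\tfrac p{p'})$ to $\tfrac13$, $\tfrac16$ and $\tfrac16$ respectively, which yields $\|Y^n-Y\|_3\lesssim 2^{-n(\frac16-\epsilon)}$ for every $\epsilon>0$.
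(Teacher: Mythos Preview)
Your approach is correct and essentially complete, but it differs from the paper's in how condition~(iii) of Property~\textup{(RIE)} is established.

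The paper verifies~(iii) by a pathwise H\"older/telescoping argument: using that $X$ is a.s.\ $\tfrac{1}{\rho}$-H\"older for a suitably chosen $\rho\in(2,3)$, a sewing-type bound (as in \cite[Lemma~3.2]{Liu2018}) gives $|S^n_{k,\ell}|\lesssim N^{1-2/\rho}|t^n_\ell-t^n_k|^{2/\rho}$ with $N=\ell-k$, which is then combined with a case split: when $2^{-n}$ is ``large'' relative to $|t^n_\ell-t^n_k|$ this already gives $|S^n_{k,\ell}|\lesssim|t^n_\ell-t^n_k|^{2/p}$, and when $2^{-n}$ is ``small'' one instead uses the uniform rate for the Riemann sums together with $|\X_{s,t}|\lesssim|t-s|^{2/p}$. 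The resulting control function is simply $w(s,t)=c|t-s|$. Your route is more probabilistic: you exploit the algebraic identity $S^n_{k,\ell}=\X_{t^n_k,t^n_\ell}-\sum_{j=k}^{\ell-1}\X_{t^n_j,t^n_{j+1}}$, control the second sum via a discrete BDG/Kolmogorov argument (getting $|\sum_j\X_{t^n_j,t^n_{j+1}}|\le Z_n|t^n_\ell-t^n_k|^\beta$ with $\E[Z_n^{2\nu}]\lesssim 2^{-n\nu}$), and then use Borel--Cantelli on the $Z_n$ to obtain a control function valid uniformly in $n$. Both methods ultimately rely on the dyadic gain $2^{-n}$ in the second moment of $\sum_j\X$, but the paper converts it into a pathwise H\"older statement earlier, whereas you stay at the moment level longer; your argument avoids the somewhat ad hoc case split at the cost of a slightly more involved random control function.

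One slip to fix: after bounding $|\sum_j\X_{t^n_j,t^n_{j+1}}|\le Z_n(t^n_\ell-t^n_k)^\beta$ and raising to the power $p/2$, the correct comparison is $(t^n_\ell-t^n_k)^{\beta p/2}\le(2^{-n}T)^{\beta p/2-1}(t^n_\ell-t^n_k)$ (using $\beta p/2-1<0$), so the factor in front of $(t^n_\ell-t^n_k)$ should be $(2^{-n}T)^{\beta p/2-1}$, not $(2^{-n}T)^{(\beta-1)p/2}$. Your subsequent condition $\kappa+\beta>\tfrac{2}{p}$ is exactly what makes $\sup_n Z_n^{p/2}(2^{-n}T)^{\beta p/2-1}<\infty$ a.s., so the argument is sound once this exponent is corrected. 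Part~(ii) and the derivation of~\eqref{eq: Ito proc 3 bound} match the paper.
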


\begin{proof}
  \emph{(i):}
  By a localization argument, we may assume that $b$ and $H$ are globally bounded. Let
  \begin{equation*}
    A_t := \int_0^t b_r \dd r \qquad \text{and} \qquad M_t := \int_0^t H_r \dd W_r
  \end{equation*}
  for $t \in [0,T]$, so that $X = x_0 + A + M$, and recall that we denote the piecewise constant approximation of $X$ along $\cP_D^n$ by
  \begin{equation*}
    X^n_t = X_{T} \1_{T}(t) + \sum_{k=0}^{2^n-1} X_{t^n_k} \1_{[t^n_k,t^n_{k+1})}(t), \qquad t \in [0,T],
  \end{equation*}
  with $t^n_k = k 2^{-n} T$ for each $k = 0, 1, \ldots, 2^n$ and $n \in \N$. Note that, by the uniform continuity of the sample paths of $X$, it is clear that $X^n$ converges uniformly to $X$ almost surely as $n \to \infty$.

  \emph{Step 1.} In this step we verify that the sample paths of $X$ are almost surely $\frac{1}{p}$-H{\"o}lder continuous. This is a standard application of the Burkholder--Davis--Gundy inequality. Indeed, for any $q \geq 1$, using the boundedness of $H$, and writing $[\cdot]$ for quadratic variation, we have that
  \begin{equation*}
    \E[|M_t - M_s|^q] = \E \bigg[\bigg|\int_s^t H_u \dd W_u\bigg|^q\bigg] \lesssim \E \bigg[\bigg[\int_0^\cdot H_u \dd W_u\bigg]_{s,t}^{\frac{q}{2}}\bigg] \lesssim |t - s|^{\frac{q}{2}},
  \end{equation*}
  so that $\|M_t - M_s\|_{L^q} \lesssim |t - s|^{\frac{1}{2}}$. By the Kolmogorov continuity theorem (see, e.g., \cite[Theorem~A.10]{Friz2010}), it follows that $\E[\|M\|_{\gamma\text{-H\"ol}}] < \infty$, where $\|\cdot\|_{\gamma\text{-H\"ol}}$ denotes the $\gamma$-H\"older norm, for any $\gamma \in [0,\frac{1}{2}-\frac{1}{q})$, which, taking $q$ sufficiently large, implies that the sample paths of $M$ are almost surely $\frac{1}{p}$-H{\"o}lder continuous. Since $A = \int_0^\cdot b_r \dd r$ with the bounded integrand $b$, the sample paths of $A$ are Lipschitz continuous, and thus also $\frac{1}{p}$-H\"older continuous.

  \emph{Step 2.} In this step we show that, almost surely, $\int_0^\cdot X_u^n \otimes \d X_u$ converges uniformly to the It{\^o} integral $\int_0^\cdot X_u \otimes \d X_u$ as $n \to \infty$. For this purpose, we write $X^n = x_0 + A^n + M^n$, where
  \begin{equation*}
    A^n_t := A_T \1_{\{T\}}(t) + \sum_{k=0}^{2^n-1} A_{t^n_k} \1_{[t^n_k,t^n_{k+1})}(t) \quad \text{and} \quad M^n_t := M_T \1_{\{T\}}(t) + \sum_{k=0}^{2^n-1} M_{t^n_k} \1_{[t^n_k,t^n_{k+1})}(t),
  \end{equation*}
  for $t \in [0,T]$. Since $X = x_0 + A + M$, we obtain
  \begin{equation}\label{eq:L2 unif conv int Ito}
    \begin{split}
    &\E\bigg[\bigg\|\int_0^\cdot X_u^n \otimes \d X_u - \int_0^\cdot X_u \otimes \d X_u\bigg\|_\infty^2\bigg]\\
    &\quad\lesssim \E\bigg[\bigg\|\int_0^\cdot (A_u^n - A_u) \otimes \d A_u\bigg\|_\infty^2\bigg] + \E\bigg[\bigg\|\int_0^\cdot (M_u^n - M_u) \otimes \d A_u\bigg\|_\infty^2\bigg]\\
    &\quad\quad + \E\bigg[\bigg\|\int_0^\cdot (A_u^n -  A_u) \otimes \d M_u\bigg\|_\infty^2\bigg] + \E\bigg[\bigg\|\int_0^\cdot (M_u^n - M_u) \otimes \d M_u\bigg\|_\infty^2\bigg].
    \end{split}
  \end{equation}
  Using the Burkholder--Davis--Gundy inequality, the fact that $[M] = [\int_0^\cdot H_t \dd W_t] = \int_0^\cdot |H_t|^2 \dd t$, and the boundedness of $H$, we can bound
  \begin{align*}
    &\E\bigg[\bigg\|\int_0^\cdot (M_u^n - M_u) \otimes \d M_u\bigg\|_\infty^2\bigg] \lesssim \E\bigg[\int_0^T |M^n_t - M_t|^2 \dd [M]_t\bigg]\\
    &\quad\lesssim \int_0^T \E[|M^n_t - M_t|^2] \dd t = \sum_{k=0}^{2^n-1} \int_{t^n_k}^{t^n_{k+1}} \E[|M_{t^n_k} - M_t|^2] \dd t \lesssim \sum_{k=0}^{2^n-1} \int_{t^n_k}^{t^n_{k+1}} \E[|[M]_{t^n_k,t}|] \dd t\\
    &\quad= \sum_{k=0}^{2^n-1} \int_{t^n_k}^{t^n_{k+1}} \E\bigg[\int_{t^n_k}^t |H_r|^2 \dd r\bigg] \dd t \lesssim \sum_{k=0}^{2^n-1} \int_{t^n_k}^{t^n_{k+1}} (t - t^n_k) \dd t \leq \sum_{k=0}^{2^n-1} (t^n_{k+1} - t^n_k)^2 = 2^{-n}.
  \end{align*}
  The other terms on the right-hand side of \eqref{eq:L2 unif conv int Ito} can be bounded similarly by $2^{-n}$, up to a constant which does not depend on $n$, and we thus have that
  \begin{equation*}
    \E \bigg[\bigg\|\int_0^\cdot X_u^n \otimes \d X_u - \int_0^\cdot X_u \otimes \d X_u\bigg\|_\infty^2\bigg] \lesssim 2^{-n},
  \end{equation*}
  for every $n \in \N$. By Markov's inequality, for any $\epsilon \in (0,1)$, we then have that
  \begin{align*}
    &\P\bigg(\bigg\|\int_0^\cdot X_u^n \otimes \d X_u - \int_0^\cdot X_u \otimes \d X_u\bigg\|_\infty \geq 2^{-\frac{n}{2}(1-\epsilon)}\bigg)\\
    &\quad\leq 2^{n(1-\epsilon)} \E \bigg[\bigg\|\int_0^\cdot X_u^n \otimes \d X_u - \int_0^\cdot X_u \otimes \d X_u\bigg\|_\infty^2\bigg] 
    \lesssim 2^{n(1-\epsilon)} 2^{-n} = 2^{-n\epsilon}.
  \end{align*}  
  It then follows from the Borel--Cantelli lemma that, almost surely,
  \begin{equation}\label{eq: convergence rate of discrete integral to Ito integral}
    \bigg\|\int_0^\cdot X_u^n \otimes \d X_u - \int_0^\cdot X_u \otimes \d X_u\bigg\|_\infty < 2^{-\frac{n}{2}(1-\epsilon)}
  \end{equation}
  for all sufficiently large $n$, which implies the desired convergence.

  \emph{Step 3.} Let $\epsilon \in (0,1)$ and $\rho = 2 + \frac{(1-\epsilon)(p-2)}{4} \in (2,3)$. We infer from Step~1 above that the sample paths of $X$ are almost surely $\frac{1}{\rho}$-H\"older continuous, from which it follows that
  \begin{equation*}
    |X_{s,t}| \lesssim |t - s|^{\frac{1}{\rho}},
  \end{equation*}
  where the implicit multiplicative constant is a random variable which does not depend on $s$ or $t$. Proceeding as in the proof of \cite[Lemma~3.2]{Liu2018}, we can show, for any $0 \leq k < \ell \leq 2^{n}$, and writing $N = \ell - k = 2^n |t^n_\ell - t^n_k| T^{-1}$, that
  \begin{equation*}
    \bigg|\int_{t^n_k}^{t^n_\ell} X^n_u \otimes \d X_u - X_{t^n_k} \otimes X_{t^n_k,t^n_\ell}\bigg| \lesssim N^{1-\frac{2}{\rho}} |t^n_\ell - t^n_k|^{\frac{2}{\rho}} \lesssim 2^{n(1 - \frac{2}{\rho})} |t^n_\ell - t^n_k| \leq 2^{n(\rho-2)} |t^n_\ell - t^n_k|.
  \end{equation*}
  If $2^{-n} \geq |t^n_\ell - t^n_k|^{\frac{4}{p(1-\epsilon)}}$, then it follows that
  \begin{equation*}
    \bigg|\int_{t^n_k}^{t^n_\ell} X^n_u \otimes \d X_u - X_{t^n_k} \otimes X_{t^n_k,t^n_\ell}\bigg| \lesssim |t^n_\ell - t^n_k|^{1 - \frac{4}{p(1-\epsilon)}(\rho-2)} = |t^n_\ell - t^n_k|^{\frac{2}{p}}.
  \end{equation*}
  We will now aim to obtain the same estimate in the case that $2^{-n} < |t^n_\ell - t^n_k|^{\frac{4}{p(1-\epsilon)}}$. To this end, let $\X$ denote the second level component of the It\^o rough path lift of $X$, as defined in \eqref{eq:semimartingale lift}. It follows from the Kolmogorov criterion for rough paths (see \cite[Theorem~3.1]{Friz2020}) that
  \begin{equation}\label{eq:bound Ito pro iter int}
    |\X_{s,t}| \lesssim |t - s|^{\frac{2}{p}},
  \end{equation}
  where the implicit multiplicative constant is a random variable which does not depend on $s$ or $t$. Using the bounds in \eqref{eq: convergence rate of discrete integral to Ito integral} and \eqref{eq:bound Ito pro iter int}, we then have, for all sufficiently large $n$, that
  \begin{align*}
    &\bigg|\int_{t^n_k}^{t^n_\ell} X^n_u \otimes \d X_u - X_{t^n_k} \otimes X_{t^n_k,t^n_\ell}\bigg|\\  
    &\quad= \bigg|\int_{t^n_k}^{t^n_\ell} X^n_u \otimes \d X_u - \int_{t^n_k}^{t^n_\ell} X_u \otimes \d X_u + \int_{t^n_k}^{t^n_\ell} X_u \otimes \d X_u - X_{t^n_k} \otimes X_{t^n_k,t^n_\ell}\bigg|\\
    &\quad\leq 2 \bigg\|\int_0^\cdot X^n_u \otimes \d X_u - \int_0^\cdot X_u \otimes \d X_u\bigg\|_\infty + |\X_{t^n_k,t^n_\ell}|\\
    &\quad\lesssim 2^{-\frac{n}{2}(1-\epsilon)} + |t^n_\ell - t^n_k|^{\frac{2}{p}}\\
    &\quad\lesssim |t^n_\ell - t^n_k|^{\frac{2}{p}}.
  \end{align*}

  We have thus established that
  \begin{equation*}
    \bigg|\int_{t^n_k}^{t^n_\ell} X^n_u \otimes \d X_u - X_{t^n_k} \otimes X_{t^n_k,t^n_\ell}\bigg|^{\frac{p}{2}} \lesssim |t^n_\ell - t^n_k|
  \end{equation*}
  holds for all $0 \leq k < \ell \leq 2^n$ and all sufficiently large $n$. It follows that there exists a random control function $w(s,t) := c |t-s|$, for some random variable $c$, such that
  \begin{equation*}
    \sup_{(s,t) \in \Delta_T} \frac{|X_{s,t}|^p}{w(s,t)} + \sup_{n \in \N} \, \sup_{0 \leq k < \ell \leq 2^n} \frac{|\int_{t^n_k}^{t^n_\ell} X^n_u \otimes \d X_u - X_{t^n_k} \otimes X_{t^n_k,t^n_\ell}|^{\frac{p}{2}}}{w(t^n_k,t^n_\ell)} \leq 1
  \end{equation*}
  holds almost surely. This means that, for almost every $\omega \in \Omega$, the sample path $X(\omega)$ satisfies Property \textup{(RIE)} relative to any $p \in (2,3)$ and the sequence of dyadic partitions $(\cP_D^n)_{n \in \N}$.

  \emph{(ii):}
  Since the sample paths of $X$ are almost surely $\frac{1}{p}$-H\"older continuous (by Step~1 above), it is straightforward to see that
  \begin{equation*}
    \|X^n - X\|_\infty \lesssim 2^{-\frac{n}{p}}, \qquad n \in \N, 
  \end{equation*}
  and, recalling \eqref{eq: convergence rate of discrete integral to Ito integral}, we have that
  \begin{equation*}
    \bigg\|\int_0^\cdot X_u^n \otimes \d X_u - \int_0^\cdot X_u \otimes \d X_u\bigg\|_\infty \lesssim 2^{-\frac{n}{2}(1-\epsilon)}, \qquad n \in \N.
  \end{equation*}
  Hence, by Theorem~\ref{thm: Euler scheme convergence}, we deduce that
  \begin{equation*}
    \|Y^n - Y\|_{3} \leq \|Y^n - Y\|_{p'} \lesssim 2^{-n(1 - \frac{1}{q})} + 2^{-\frac{n}{p}(1 - \frac{p}{p'})} + 2^{-\frac{n}{2} (1-\epsilon) (1 - \frac{p}{p'})},
  \end{equation*}
  for any $p' \in (p,3)$ and $q \in (1,2)$ such that $\frac{1}{p'} + \frac{1}{q} > 1$, which leads to \eqref{eq: Ito proc p' bound}. Choosing $p$ sufficiently close to $2$, $p'$ to $3$, and $q$ to $\frac{3}{2}$, and replacing $\epsilon$ by $6\epsilon$, then reveals \eqref{eq: Ito proc 3 bound}.
\end{proof}

\subsection{L{\'e}vy processes}\label{section: Levy processes}

Let $L = (L_t)_{t \in [0,T]}$ be a $d$-dimensional L\'evy process with characteristics $(\lambda, \Sigma, \nu)$. In this section, we shall work under the assumption that $\int_{|x| < 1} |x|^q \hspace{1pt} \nu(\d x) < \infty$ for some $q \in [1,2)$.

By the L\'evy--It\^o decomposition (see, e.g., \cite[Theorem~2.4.16]{Applebaum2009}), there exists a Brownian motion $W$ with covariance matrix $\Sigma$, and an independent Poisson random measure $\mu$ on $[0,T] \times (\R^d \setminus \{0\})$ with compensator $\nu$, such that $L = W + \phi$, where
\begin{equation}\label{eq: defn phi Levy}
\phi_t = \lambda t + \int_{|x| \geq 1} x \hspace{1pt} \mu(t,\d x) + \int_{|x| < 1} x \hspace{1pt} (\mu(t,\d x) - t \hspace{1pt} \nu(\d x)), \qquad t \in [0,T].
\end{equation}
Since $\int_{|x| < 1} |x|^q \hspace{1pt} \nu(\d x) < \infty$, we have that $\phi(\omega) \in D^q([0,T];\R^d)$ for almost every $\omega \in \Omega$; see \cite[Theorem~2.4.25]{Applebaum2009} and \cite[Th\'eor\`eme~IIIb]{Bretagnolle1972}.

Let $(\cP_D^n)_{n \in \N}$ be the dyadic partitions of $[0,T]$, as defined in \eqref{eq: dyadic partitions}. For each $n \in \N$, we also let $J^n = \{t \in (0,T] \, : \, |\Delta \phi_t| \geq 2^{-n}\}$, where $\Delta \phi_t = \phi_t - \phi_{t-}$ denotes the jump of $\phi$ at time $t$, and we let
\begin{equation}\label{eq: Levy partitions}
  \cP_L^n = \cP_D^n \cup J^n.
\end{equation}
We will consider $(\cP_L^n)_{n \in \N}$ as our sequence of adapted partitions, noting in particular that, for almost every $\omega \in \Omega$, $(\cP_L^n(\omega))_{n \in \N}$ is a nested sequence of (finite) partitions with vanishing mesh size, and that $\{t \in (0,T] : L_{t-}(\omega) \neq L_t(\omega)\} \subseteq \cup_{n \in \N} \cP_L^n(\omega)$.

\begin{remark}\label{remark: must include jump times}
  In order to obtain pointwise convergence of an Euler scheme, it is necessary that the jump times of the driving signal belong to the partitions used to construct the discretization, a fact which follows immediately from Proposition~\ref{prop: uniform convergence}, necessitating the inclusion of the jump times $(J^n)_{n \in \N}$ above.
\end{remark}

\begin{proposition}\label{prop: Levy process has RIE}
  Let $L$ be a $d$-dimensional L\'evy process with characteristics $(\lambda, \Sigma, \nu)$, and assume that $\int_{|x| < 1} |x|^q \hspace{1pt} \nu(\d x) < \infty$ for some $q \in [1,2)$. Let $p \in (2,3)$ such that $\frac{1}{p} + \frac{1}{q} > 1$. Let $Y$ be the solution to the SDE \eqref{eq:SDE} driven by $L$, and let $Y^n$ be the corresponding Euler approximation along $\cP_L^n$, as defined in \eqref{eq:RDE Euler scheme}.
  \begin{itemize}
    \item[(i)] For almost every $\omega \in \Omega$, the sample path $L(\omega)$ satisfies Property \textup{(RIE)} relative to $p$ and $(\cP_L^n(\omega))_{n \in \N}$.
    \item[(ii)] For any $p' \in (p,3)$ and $q' \in (q,2)$ such that $\frac{1}{p'} + \frac{1}{q'} > 1$, any $\gamma \in (0,\frac{1}{p})$, and any $\delta \in (0,1-\frac{q}{2})$, there exists a random variable $C$, which does not depend on $n$, such that
    \begin{equation*}
      \|Y^n - Y\|_{p'} \leq C \Big(2^{-n(1-\frac{1}{q'})} + (2^{-n(\frac{1}{p} - \gamma)} + 2^{-n(\frac{1}{p} - \frac{1}{p'})} + 2^{-n\delta(1-\frac{q}{q'})})^{1-\frac{p}{p'}}\Big), \qquad n \in \N.
    \end{equation*}
  \end{itemize}
\end{proposition}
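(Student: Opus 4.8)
The plan is to use the L\'evy--It\^o decomposition $L = W + \phi$, where $W$ is a Brownian motion with covariance matrix $\Sigma$ and $\phi \in D^q([0,T];\R^d)$ is the jump part appearing in \eqref{eq: defn phi Levy}, together with the stability of Property \textup{(RIE)} under Young-type perturbations established in Proposition~\ref{prop: stability of RIE}. Throughout I fix $q' \in (q,2)$ with $\frac1p + \frac1{q'} > 1$, which is possible since $\frac1p + \frac1q > 1$.

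For part (i), the crucial step is to show that, for almost every $\omega$, the Brownian path $W(\omega)$ satisfies Property \textup{(RIE)} relative to $p$ and $(\cP_L^n(\omega))_{n\in\N}$. Since $W$ is independent of $\phi$, hence of the jump times $(J^n)_{n\in\N}$, by Fubini it suffices to prove that almost every Brownian path satisfies Property \textup{(RIE)} relative to $p$ and any fixed deterministic sequence of partitions $(\mathcal{Q}^n)_{n\in\N}$ with $\cP_D^n \subseteq \mathcal{Q}^n$ and $|\mathcal{Q}^n| \le |\cP_D^n| = 2^{-n}T$. Conditions (i) and (ii) of Property \textup{(RIE)} for $W$ along $(\mathcal{Q}^n)$ follow as in the proof of Proposition~\ref{prop: Ito diffusion has RIE} (a piecewise constant approximation along a finer partition converges at least as fast, and left-point Riemann sums along any refining sequence converge to the It\^o integral $\int_0^\cdot W_u \otimes \d W_u$). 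For condition (iii), writing $(r_i)$ for the consecutive points of $\mathcal{Q}^n$ in an interval $[s,t]=[t^n_k,t^n_\ell]$, Chen's relation for the It\^o Brownian rough path $\bW = (W,\W)$ expresses the area error of $W$ along $\mathcal{Q}^n$ as $\W_{s,t} - \sum_i \W_{r_i,r_{i+1}}$; the first term is bounded by a random multiple of $|t-s|^{\frac2p}$ by the Kolmogorov criterion for rough paths, while the Riemann sum $\sum_i \W_{r_i,r_{i+1}}$ is controlled uniformly in $n$ by exploiting the H\"older-type regularity of $\W$ and the (conditional) orthogonality of the increments $\W_{r_i,r_{i+1}}$, via moment estimates and the Borel--Cantelli lemma, the number of extra partition points inserted by $J^n$ being controlled by the finite $q$-variation of $\phi$. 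This yields a random control function comparable to Lebesgue measure, giving the claim for $W$. Next one verifies that $\|\phi^n - \phi\|_{q',[0,T]} \to 0$, where $\phi^n$ is the piecewise constant approximation of $\phi$ along $\cP_L^n$: since $\phi$ has finite $q$-variation and every jump of $\phi$ of size $\ge 2^{-n}$ is a point of $\cP_L^n = \cP_D^n \cup J^n$, only jumps smaller than $2^{-n}$ occur strictly between consecutive points of $\cP_L^n$; a moment bound for the compensated small-jump martingale (whose increments over such intervals are of order $2^{-n(3-q)/2}$) and Borel--Cantelli give $\|\phi^n - \phi\|_\infty \lesssim 2^{-n\delta}$ for any $\delta \in (0, 1 - \frac q2)$, and interpolation against $\sup_n \|\phi^n\|_q \le \|\phi\|_q < \infty$ yields $\|\phi^n - \phi\|_{q'} \lesssim 2^{-n\delta(1 - \frac q{q'})} \to 0$. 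Since $W$ satisfies Property \textup{(RIE)} relative to $p$ and $(\cP_L^n)$, $\phi \in D^{q'}$, and $\|\phi^n - \phi\|_{q'} \to 0$, Proposition~\ref{prop: stability of RIE} (with $q'$ in place of $q$) shows that $L = W + \phi$ satisfies Property \textup{(RIE)} relative to $p$ and $(\cP_L^n)$, which is part (i).

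For part (ii), by part (i) and Lemma~\ref{lem:RDE and SDE agree} (the partitions $\cP_L^n$ being adapted), the solution $Y$ of the SDE \eqref{eq:SDE} driven by $L$ agrees almost surely with the solution of the RDE \eqref{eq:RDE Y} driven by the canonical, equivalently It\^o, rough lift of $L$, and $Y^n$ is the corresponding Euler scheme; Theorem~\ref{thm: Euler scheme convergence} then gives, for $p' \in (p,3)$ with $\frac1{p'} + \frac1{q'} > 1$ (with an implicit constant that is a random variable, via the random control function for $L$),
\begin{equation*}
\|Y^n - Y\|_{p'} \lesssim |\cP_L^n|^{1 - \frac1{q'}} + \|L^n - L\|_\infty^{1 - \frac p{p'}} + \bigg\|\int_0^\cdot L^n_u \otimes \d L_u - \int_0^\cdot L_u \otimes \d L_u\bigg\|_\infty^{1 - \frac p{p'}}.
\end{equation*}
It remains to estimate the three terms. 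The first is $\le (2^{-n}T)^{1 - \frac1{q'}}$. Writing $L^n = W^n + \phi^n$ for the discretizations along $\cP_L^n$, one has $\|L^n - L\|_\infty \le \|W^n - W\|_\infty + \|\phi^n - \phi\|_\infty \lesssim 2^{-n(\frac1p - \gamma)} + 2^{-n\delta(1 - \frac q{q'})}$, using the H\"older continuity of $W$ and the bound from part (i). For the integral term I would expand bilinearly into four cross terms in $W$ and $\phi$: the difference $\int_0^\cdot W^n_u \otimes \d W_u - \int_0^\cdot W_u \otimes \d W_u$ is $\lesssim 2^{-\frac n2(1-\epsilon)} \lesssim 2^{-n(\frac1p - \gamma)}$ as in the proof of Proposition~\ref{prop: Ito diffusion has RIE}; the difference $\int_0^\cdot W^n_u \otimes \d\phi_u - \int_0^\cdot W_u \otimes \d\phi_u$ is bounded, via the standard Young estimate and interpolation, by $\|W^n - W\|_{p'}\|\phi\|_{q'} \lesssim 2^{-n(\frac1p - \frac1{p'})}$; and the two terms integrated against $\d\phi$ are bounded, again by the Young estimate, by a multiple of $\|\phi^n - \phi\|_{q'} \lesssim 2^{-n\delta(1 - \frac q{q'})}$. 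Collecting these, the second and third terms are both $\lesssim \big(2^{-n(\frac1p - \gamma)} + 2^{-n(\frac1p - \frac1{p'})} + 2^{-n\delta(1 - \frac q{q'})}\big)^{1 - \frac p{p'}}$, which together with the bound on the first term gives the stated estimate.

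The main obstacle is the verification of condition (iii) of Property \textup{(RIE)} for the Brownian part $W$ along the \emph{non-uniform} partitions $\cP_L^n$: the proof of Proposition~\ref{prop: Ito diffusion has RIE} relies on the equidistant dyadic structure through the count of subintervals, and this breaks down once the (potentially very numerous) jump times of $\phi$ are inserted. Controlling these extra contributions uniformly in $n$ --- using the independence of $W$ and $\phi$, the cancellation inherent in left-point (It\^o) Riemann sums, and a Borel--Cantelli argument --- is the heart of the proof; the remaining steps are routine applications of the results of Section~\ref{sec:RDE} together with standard estimates for L\'evy processes.
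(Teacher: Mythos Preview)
Your overall route matches the paper's: decompose $L=W+\phi$, prove that $W$ satisfies Property \textup{(RIE)} along $(\cP_L^n)_{n\in\N}$ (this is Lemma~\ref{lemma: BM satisfies RIE wrt jump times of Levy process}), then apply the stability result Proposition~\ref{prop: stability of RIE} for part~(i), and finally Theorem~\ref{thm: Euler scheme convergence} for part~(ii). Your treatment of $\|\phi^n-\phi\|_\infty$ and the bilinear expansion of the integral term are essentially those of the paper (modulo the harmless slip that one of the ``two terms integrated against $\d\phi$'' is actually $\int (\phi^n-\phi)\otimes\d W$).

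The gap is in your sketch of condition~(iii) for $W$ along the non-uniform partitions $\cP_L^n$. Your Chen decomposition is correct, and you rightly note that the count $N$ of partition points in $[t^n_k,t^n_\ell]$ is controlled by $c(s,t)=|t-s|+\|\phi\|_{q,[s,t]}^q$ via $N\lesssim 2^{nq}c(t^n_k,t^n_\ell)$. But neither ingredient you propose for $\sum_i \W_{r_i,r_{i+1}}$ gives a bound that is \emph{uniform in $n$} with the required control scaling: the pathwise H\"older bound $|\W_{r_i,r_{i+1}}|\lesssim (r_{i+1}-r_i)^{2/\rho}$ yields $\sum_i|\W_{r_i,r_{i+1}}|\lesssim N^{1-2/\rho}|t^n_\ell-t^n_k|^{2/\rho}$, which blows up as $n\to\infty$; and the orthogonality/moment route gives $\E\big[|\sum_i\W_{r_i,r_{i+1}}|^2\big]\lesssim 2^{-n}|t^n_\ell-t^n_k|$, but turning this into an almost-sure bound valid simultaneously for all $(k,\ell)$ with the right dependence on $c(t^n_k,t^n_\ell)$ requires a union over a number of pairs growing polynomially in $N_n$, which is not uniformly bounded in $n$. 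The paper resolves this with a dichotomy: when $2^{-n}\ge c(t^n_k,t^n_\ell)^{4/(p(1-\epsilon))}$ the H\"older bound $N^{1-2/\rho}|t^n_\ell-t^n_k|^{2/\rho}\lesssim 2^{n(\rho-2)}c(t^n_k,t^n_\ell)$ is good enough; when $2^{-n}<c(t^n_k,t^n_\ell)^{4/(p(1-\epsilon))}$ one instead bounds the area error by the already-established uniform convergence rate $2^{-\frac{n}{2}(1-\epsilon)}$ from condition~(ii), plus the Kolmogorov bound $|\W_{s,t}|\lesssim|t-s|^{2/p}$. Both cases then give $|\text{area error}|\lesssim c(t^n_k,t^n_\ell)^{2/p}$.

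Two smaller points. First, the Fubini step via independence of $W$ and $\phi$ is unnecessary: the paper works directly with the random adapted partitions, since $|\cP_L^n|\le 2^{-n}T$ is a deterministic bound (so the BDG argument for condition~(ii) goes through), and the pathwise estimates in condition~(iii) involve only the H\"older regularity of $W$ and $\W$ together with the \emph{path}-dependent control $c$. Second, your reduction ``it suffices to prove RIE for $W$ along any deterministic $(\mathcal Q^n)$ with $\cP_D^n\subseteq\mathcal Q^n$ and $|\mathcal Q^n|\le 2^{-n}T$'' is too generous as stated: without a constraint on the number of points in $\mathcal Q^n$ (which is exactly what $\|\phi\|_q$ provides), condition~(iii) can fail.
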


To prove this statement, we need the following lemma.

\begin{lemma}\label{lemma: BM satisfies RIE wrt jump times of Levy process}
  Let $p \in (2,3)$, let $W$ be a $d$-dimensional Brownian motion with covariance matrix $\Sigma$, and let $(\cP_L^n)_{n \in \N}$ be the sequence of adapted partitions defined in \eqref{eq: Levy partitions}. For almost every $\omega \in \Omega$, the sample path $W(\omega)$ satisfies Property \textup{(RIE)} relative to $p$ and $(\cP_L^n(\omega))_{n \in \N}$.
\end{lemma}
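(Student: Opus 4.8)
The plan is to check the three conditions of Property \textup{(RIE)} for the Brownian sample path $W$ along the (random) partitions $\cP_L^n = \cP_D^n \cup J^n$, using throughout that $W$ is independent of the Poisson random measure $\mu$ — hence of $\phi$ and of the sequence $(J^n)_{n \in \N}$. Conditionally on $\phi$, the sequence $(\cP_L^n)_{n\in\N}$ is deterministic, nested, contains $\cP_D^n$, has mesh $\le 2^{-n}T$, and has cardinality $N_n := \#\cP_L^n$ with $\E[N_n^2] \lesssim 2^{4n}$ (because $|J^n|$ is Poisson with mean $T\nu(\{|x|\ge 2^{-n}\}) \lesssim 2^{nq}$ and $q < 2$). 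Condition~(i) is immediate from the uniform continuity of the sample paths of $W$ together with $|\cP_L^n| \to 0$. For condition~(ii) I would compare the Riemann sums along $\cP_L^n$ with those along $\cP_D^n$: writing $W^n$ and $\widetilde W^n$ for the piecewise constant approximations of $W$ along $\cP_L^n$ and $\cP_D^n$, the latter's Riemann sums converge uniformly, a.s., to the It\^o integral $\int_0^\cdot W_u\otimes\d W_u$ by Proposition~\ref{prop: Ito diffusion has RIE} (applied to $W$ regarded as an It\^o process), while, since $|W^n_u - \widetilde W^n_u|$ is bounded by the oscillation of $W$ over the dyadic interval of $\cP_D^n$ containing $u$, the Burkholder--Davis--Gundy inequality gives
\[
  \E\bigg[\bigg\|\int_0^\cdot (W^n_u - \widetilde W^n_u)\otimes\d W_u\bigg\|_\infty^2\bigg] \lesssim \E\bigg[\int_0^T |W^n_u - \widetilde W^n_u|^2\dd u\bigg] \lesssim 2^{-n}T \sum_{[a,b]\in\cP_D^n}\E[\mathrm{osc}(W;[a,b])^2] \lesssim 2^{-n};
\]
Markov's inequality and the Borel--Cantelli lemma then yield a.s.\ uniform convergence, so condition~(ii) holds with limit $\int_0^\cdot W_u\otimes\d W_u$.

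The main work is condition~(iii). For $0 \le k < \ell \le N_n$ set $R^n_{k,\ell} := \int_{t^n_k}^{t^n_\ell} W^n_u \otimes \d W_u - W_{t^n_k}\otimes W_{t^n_k,t^n_\ell}$; expanding the Riemann sum and then invoking Chen's relation for the It\^o Brownian rough path lift $\bW = (W,\W)$ gives
\[
  R^n_{k,\ell} = \sum_{m=k}^{\ell-1} W_{t^n_k,t^n_m}\otimes W_{t^n_m,t^n_{m+1}} = \W_{t^n_k,t^n_\ell} - \sum_{m=k}^{\ell-1}\W_{t^n_m,t^n_{m+1}}.
\]
The first term is harmless, since $|\W_{s,t}| \lesssim |t-s|^{\frac{2}{p}}$ a.s.\ by Kolmogorov's criterion for the It\^o lift. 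For the correction term I would argue conditionally on $\phi$ and fix $i,j$: the summands $\W^{ij}_{t^n_m,t^n_{m+1}}$ are then independent, mean zero, and (by Brownian scaling) distributed as $(t^n_{m+1}-t^n_m)\,\W^{ij}_{0,1}$, where $\W^{ij}_{0,1}$ possesses exponential moments near the origin, as established in the proof of Proposition~\ref{prop: BM satisfies RIE}. Applying Bernstein's inequality with the threshold $u := C_0|t^n_\ell - t^n_k|^{\frac{2}{p}}$, and using $\sigma^2 := \sum_{m=k}^{\ell-1}(t^n_{m+1}-t^n_m)^2 \le |\cP_L^n|\,(t^n_\ell - t^n_k) \le 2^{-n}T\,(t^n_\ell - t^n_k)$ and $M := \max_{k\le m<\ell}(t^n_{m+1}-t^n_m) \le 2^{-n}T$ (with a brief case split for the regime $t^n_\ell - t^n_k < 2^{-n}T$, where one instead uses $\sigma^2 \le (t^n_\ell - t^n_k)^2$ and $M \le t^n_\ell - t^n_k$), the fact that $\frac{2}{p} < 1$ forces $\min\{u^2/\sigma^2,\, u/M\} \ge c\,2^{n(1-\frac{2}{p})}$, whence $\P(|\sum_{m=k}^{\ell-1}\W^{ij}_{t^n_m,t^n_{m+1}}| \ge u \mid \phi) \le 2\exp(-c\,2^{n(1-\frac{2}{p})})$ uniformly over $k,\ell,i,j$. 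A union bound over $(k,\ell,i,j)$, followed by taking the expectation over $\phi$ with $\E[N_n^2]\lesssim 2^{4n}$, gives probability $\lesssim 2^{4n}\exp(-c\,2^{n(1-\frac{2}{p})})$, which is summable because $1 - \frac{2}{p} > 0$; by Borel--Cantelli (enlarging the constant to cover the finitely many small $n$) there is a.s.\ a finite random $C_1$ with $|\sum_{m=k}^{\ell-1}\W_{t^n_m,t^n_{m+1}}| \le C_1|t^n_\ell - t^n_k|^{\frac{2}{p}}$ for all $n$ and all $0 \le k < \ell \le N_n$.

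Combining the two contributions to $R^n_{k,\ell}$ gives, a.s., $|R^n_{k,\ell}| \le C_2|t^n_\ell - t^n_k|^{\frac{2}{p}}$, hence $|R^n_{k,\ell}|^{\frac{p}{2}} \le C_2^{\frac{p}{2}}(t^n_\ell - t^n_k)$; together with $|W_{s,t}|^p \le \|W\|_{p,[s,t]}^p$ and $\|W\|_p < \infty$ a.s., condition~(iii) holds with the a.s.\ finite random control function $w(s,t) := K\big(\|W\|_{p,[s,t]}^p + (t-s)\big)$ for a suitable random constant $K$ (rescaled so that the right-hand side of \eqref{eq:RIE inequality} equals $1$). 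This verifies Property \textup{(RIE)} for $W$ relative to $p$ and $(\cP_L^n)_{n\in\N}$, a.s. The step I expect to be the main obstacle is the control of the correction term $\sum_{m=k}^{\ell-1}\W_{t^n_m,t^n_{m+1}}$: since the partitions $\cP_L^n$ may carry up to $\sim 2^{nq}$ points, any pathwise or H\"older bound on this sum diverges as $n\to\infty$, so one is forced to exploit the cancellation among the independent increments. The point that makes the probabilistic argument close is the choice of threshold $|t^n_\ell - t^n_k|^{\frac{2}{p}}$ with $\frac{2}{p} < 1$: it dominates the typical size of $\sum_m\W_{t^n_m,t^n_{m+1}}$ on \emph{every} subinterval regardless of how many partition points it contains, yielding a concentration bound strong enough to survive the union bound over all $0 \le k < \ell \le N_n$, whose number is only polynomially large in $2^n$.
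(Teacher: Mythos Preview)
Your argument is correct, but it takes a genuinely different route from the paper for condition~(iii). The paper proceeds pathwise: using the $\frac{1}{\rho}$-H\"older continuity of $W$ and the argument of \cite[Lemma~3.2]{Liu2018}, it bounds $|R^n_{k,\ell}| \lesssim N^{1-\frac{2}{\rho}}|t^n_\ell - t^n_k|^{\frac{2}{\rho}}$ where $N = \ell - k$, and then controls $N$ by $2^{n\rho} c(t^n_k,t^n_\ell)$ with the control function $c(s,t) = |t-s| + \|\phi(\omega)\|_{q,[s,t]}^q$ (the second summand accounting for the jump times in $J^n$). A case split on the relative sizes of $2^{-n}$ and $c(t^n_k,t^n_\ell)^{\frac{4}{p(1-\epsilon)}}$ then closes the estimate, the ``large interval'' case being handled by combining the uniform convergence from~(ii) with the Kolmogorov bound $|\W_{s,t}| \lesssim |t-s|^{\frac{2}{p}}$. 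Your approach instead exploits the independence of $W$ and the Poisson random measure: conditionally on $\phi$, the partition is deterministic and the summands $\W^{ij}_{t^n_m,t^n_{m+1}}$ are independent sub-Gaussian variables, so Bernstein (or indeed Hoeffding) plus a union bound and Borel--Cantelli yield the uniform estimate directly. The paper's argument does \emph{not} use this independence at all, so it would extend to settings where the Brownian and jump parts are correlated; it also produces a control function involving $\|\phi\|_{q,[s,t]}^q$, which feeds naturally into the subsequent proof of Proposition~\ref{prop: Levy process has RIE}. Your approach is more direct, gives the cleaner control $w(s,t) \sim t-s$, and makes transparent why the potentially large cardinality $N_n$ (up to $\sim 2^{nq}$ points) causes no difficulty: the exponential concentration $\exp(-c\,2^{n(1-\frac{2}{p})})$ beats any polynomial union bound. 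For condition~(ii) the two proofs are essentially equivalent; the paper bounds $\int_0^\cdot (W^n_u - W_u)\otimes\d W_u$ directly via the H\"older norm of $W$ and $|\cP_L^n| \le 2^{-n}T$, rather than comparing with the dyadic Riemann sums as you do.
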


\begin{proof}
  We need to verify each of the conditions (i)--(iii) in Property \textup{(RIE)}.

  \emph{(i):}
  Since the sample paths of $W$ are uniformly continuous on the compact interval $[0,T]$, it is straightforward to see that $W^n(\omega) \to W(\omega)$ uniformly as $n \to \infty$ for almost every $\omega \in \Omega$, where $W^n$ denotes the piecewise constant approximation of $W$ along $\cP_L^n$.

  \emph{(ii):}
  It follows from the Kolmogorov continuity criterion that the sample paths of Brownian motion are almost surely $\frac{1}{p}$-H\"older continuous, and that the H\"older constant $\|W\|_{\frac{1}{p}\textup{-H\"ol}}$ has finite moments of all orders (see, e.g., \cite[Theorem~A.1]{Bartl2019b}). Applying the Burkholder--Davis--Gundy inequality, we then have that
  \begin{align*}
    \E \bigg[\bigg\|\int_0^\cdot W_u^n \otimes \d W_u - \int_0^\cdot W_u \otimes \d W_u\bigg\|_\infty^2\bigg] &\lesssim \E \bigg[\int_0^T |W_t^n - W_t|^2 \dd t\bigg]\\
    &\leq \E \bigg[\|W\|_{\frac{1}{p}\textup{-H\"ol}}^2 \int_0^T |\cP_L^n|^{\frac{2}{p}} \dd t\bigg] \lesssim \E [\|W\|_{\frac{1}{p}\textup{-H\"ol}}^2] 2^{-\frac{2n}{p}}.
  \end{align*}
  Let $\gamma \in (0,\frac{1}{p})$ and $\epsilon = 1 - \frac{2}{p} + 2\gamma \in (1 - \frac{2}{p},1)$. By Markov's inequality, we infer that
  \begin{equation*}
     \P \bigg(\bigg\|\int_0^\cdot W^n_u \otimes \d W_u - \int_0^\cdot W_u \otimes \d W_u\bigg\|_\infty \geq 2^{-\frac{n}{2}(1-\epsilon)}\bigg) \lesssim 2^{-\frac{2n}{p} + n(1 - \epsilon)} = 2^{-2n\gamma}.
  \end{equation*}
  By the Borel--Cantelli lemma, we then have that, almost surely,
  \begin{equation}\label{eq: Borel Cantelli bound for BM part}
    \bigg\|\int_0^\cdot W^n_u \otimes \d W_u - \int_0^\cdot W_u \otimes \d W_u\bigg\|_\infty < 2^{-\frac{n}{2}(1-\epsilon)}
  \end{equation}
  for all sufficiently large $n$. It follows that $(\int_0^\cdot W^n_u \otimes \d W_u)(\omega)$ converges uniformly to $(\int_0^\cdot W_u \otimes \d W_u)(\omega)$ as $n \to \infty$ for almost every $\omega \in \Omega$.

  \emph{(iii):}
  Let $\rho = 2 + \frac{(1-\epsilon)(p-2)}{4} \in (2,3)$. Since the sample paths of $W$ are almost surely $\frac{1}{\rho}$-H\"older continuous, it follows that
  \begin{equation*}
    |W_{s,t}|^{\rho} \lesssim |t-s|,
  \end{equation*}
  where the implicit multiplicative constant is a random variable which does not depend on $s$ or $t$. Proceeding as in the proof of \cite[Lemma~3.2]{Liu2018}, we can show, for any $0 \leq k < \ell$, and writing $N = \ell - k$, we can show that
  \begin{equation*}
    \bigg|\int_{t_k^n}^{t_\ell^n} W_u^n \otimes \d W_u - W_{t_k^n} \otimes W_{t_k^n,t_\ell^n}\bigg| \lesssim N^{1-\frac{2}{\rho}} |t_\ell^n - t_k^n|^{\frac{2}{\rho}},
  \end{equation*}
  where $\{0 = t^n_0 < t^n_1 < \cdots\}$ are the partition points of $\cP_L^n(\omega)$ for some (here fixed) $\omega \in \Omega$. Using $|\cdot|$ here to denote the cardinality of a set, we note that the number $N$ can be bounded by
  \begin{align*}
    N &\leq |\cP_D^n(\omega) \cap (t_k^n,t_\ell^n]| + |J^n(\omega) \cap (t_k^n,t_\ell^n]| \leq 2^n T^{-1} |t_\ell^n - t_k^n| + 2^{nq} \sum_{t \in J^n(\omega) \cap (t_k^n,t_\ell^n]} |\Delta \phi_t(\omega)|^q\\
    &\lesssim 2^n |t_\ell^n - t_k^n| + 2^{nq} \|\phi(\omega)\|_{q,[t^n_k,t^n_\ell]}^q \leq 2^{n\rho} c(t_k^n,t_\ell^n),
  \end{align*}
  where $c$ is the control function defined by $c(s,t) := |t - s| + \|\phi(\omega)\|_{q,[s,t]}^q$ for $(s,t) \in \Delta_T$. If $2^{-n} \geq c(t_k^n,t_\ell^n)^{\frac{4}{p(1-\epsilon)}}$, this implies that
  \begin{equation*}
    \bigg|\int_{t_k^n}^{t_\ell^n} W_u^n \otimes \d W_u - W_{t_k^n} \otimes W_{t_k^n,t_\ell^n}\bigg| \lesssim 2^{n(\rho - 2)} c(t_k^n,t_\ell^n) \leq c(t_k^n,t_\ell^n)^{1 - \frac{4}{p(1-\epsilon)}(\rho-2)} = c(t_k^n,t_\ell^n)^{\frac{2}{p}}.
  \end{equation*}
  In the case that $2^{-n} < c(t_k^n,t_\ell^n)^{\frac{4}{p(1-\epsilon)}}$, we can follow the same argument as in Step~3 of the proof of part~(i) of Proposition~\ref{prop: Ito diffusion has RIE} (using in particular the bound in \eqref{eq: Borel Cantelli bound for BM part}) to obtain again that
  \begin{equation*}
    \bigg|\int_{t_k^n}^{t_\ell^n} W_u^n \otimes \d W_u - W_{t_k^n} \otimes W_{t_k^n,t_\ell^n}\bigg| \lesssim c(t_k^n,t_\ell^n)^{\frac{2}{p}},
  \end{equation*}
  where, as usual, the implicit multiplicative constant depends on $\omega$, but not on $n$.

  It follows that there exists a random control function $w$ such that
  \begin{equation*}
    \sup_{(s,t) \in \Delta_T} \frac{|W_{s,t}|^p}{w(s,t)} + \sup_{n \in \N} \, \sup_{0 \leq k < \ell} \frac{|\int_{t_k^n}^{t_\ell^n} W_u^n \otimes \d W_u - W_{t_k^n} \otimes W_{t_k^n,t_\ell^n}|^{\frac{p}{2}}}{w(s,t)} \leq 1
  \end{equation*}
  holds almost surely.
\end{proof}

\begin{proof}[Proof of Proposition~\ref{prop: Levy process has RIE}]
  Let $W$ be a Brownian motion with covariance matrix $\Sigma$, and let $\phi$ be the process defined in \eqref{eq: defn phi Levy}, so that $L = W + \phi$. As usual, we let $L^n$, $W^n$ and $\phi^n$ denote the piecewise constant approximations of $L$, $W$ and $\phi$ respectively, along the adapted partition $\cP_L^n$.

  Recalling \eqref{eq: defn phi Levy}, we see that we can write $\phi = \eta + \xi$, where
  \begin{equation}\label{eq: eta expression}
    \eta_t := \lambda t + \int_{|x| \geq 2^{-n}} x \hspace{1pt} \mu(t,\d x) - t \int_{2^{-n} \leq |x| < 1} x \hspace{1pt} \nu(\d x)
  \end{equation}
  and
  \begin{equation*}
    \xi_t := \int_{|x| < 2^{-n}} x \hspace{1pt} (\mu(t,\d x) - t \hspace{1pt} \nu(\d x)).
  \end{equation*}
  Let $\eta^n$ and $\xi^n$ denote the piecewise constant approximations of $\eta$ and $\xi$ along $\cP_L^n$. Recalling how the adapted partition $\cP_L^n$ was defined in \eqref{eq: Levy partitions}, we note that, when estimating the difference $\eta^n - \eta$, we may ignore all jumps of size greater than $2^{-n}$, and may thus ignore the first integral on the right-hand side of \eqref{eq: eta expression}. We then have that
  \begin{equation}\label{eq: eta^n - eta bound}
    \begin{split}
    \|\eta^n - \eta\|_\infty &\leq 2^{-n} T |\lambda| + 2^{-n} T \int_{2^{-n} \leq |x| < 1} |x| \hspace{1pt} \nu(\d x)\\
    &\leq 2^{-n} T |\lambda| + 2^{-n(2-q)} T \int_{2^{-n} \leq |x| < 1} |x|^q \hspace{1pt} \nu(\d x) \lesssim 2^{-n(2-q)}.
    \end{split}
  \end{equation}

  Writing $\langle \cdot \rangle$ for the predictable quadratic variation, we have (see, e.g., \cite[Chapter~2, Theorem~1.33]{Jacod2003}) that
  \begin{equation*}
    \E [\langle \xi \rangle_T] \leq T \int_{|x| < 2^{-n}} |x|^2 \hspace{1pt} \nu(\d x) \leq 2^{-n(2-q)} T \int_{|x| < 2^{-n}} |x|^q \hspace{1pt} \nu(\d x).
  \end{equation*}
  Since this quantity is finite, the process $\xi$ is a square integrable martingale, and in particular $\E [[\xi]_T] = \E [\langle \xi \rangle_T]$, where $[\cdot]$ denotes the usual quadratic variation. By the Burkholder--Davis--Gundy inequality, we then have that
  \begin{equation}\label{eq: E xi^2 bound}
    \E [\|\xi\|_\infty^2] \lesssim \E [[\xi]_T] = \E [\langle \xi \rangle_T] \lesssim 2^{-n(2-q)}.
  \end{equation}

  Note that, for any $a > 0$, if $\|\xi\|_\infty < \frac{a}{2}$, then $\|\xi^n - \xi\|_\infty < a$. It follows that, for any $\delta \in (0,1-\frac{q}{2})$,
  \begin{equation*}
    \P(\|\xi^n - \xi\|_\infty \geq 2^{-n\delta}) \leq \P(\|\xi\|_\infty \geq 2^{-1 - n\delta}).
  \end{equation*}
  By Markov's inequality and the bound in \eqref{eq: E xi^2 bound}, we see that
  \begin{equation*}
    \P(\|\xi^n - \xi\|_\infty \geq 2^{-n\delta}) \lesssim 2^{2 - n(2-q-2\delta)},
  \end{equation*}
  and the Borel--Cantelli lemma then implies that, almost surely,
  \begin{equation}\label{eq: xi^n - xi bound}
    \|\xi^n - \xi\|_\infty \lesssim 2^{-n\delta},
  \end{equation}
  where the implicit multiplicative constant is a random variable which does not depend on $n$. It follows from \eqref{eq: eta^n - eta bound} and \eqref{eq: xi^n - xi bound} that
  \begin{equation}\label{eq: phi^n - phi bound}
    \|\phi^n - \phi\|_\infty \lesssim 2^{-n\delta}.
  \end{equation}

  Let $p' \in (p,3)$ and $q' \in (q,2)$ such that $\frac{1}{p'} + \frac{1}{q'} > 1$. Using interpolation, the fact that $\sup_{n \in \N} \|\phi^n\|_q \leq \|\phi\|_q$, and the bound in \eqref{eq: phi^n - phi bound}, we have that, almost surely,
  \begin{equation}\label{eq: phi^n - phi q' bound}
    \|\phi^n - \phi\|_{q'} \leq \|\phi^n - \phi\|_\infty^{1-\frac{q}{q'}} \|\phi^n - \phi\|_q^{\frac{q}{q'}} \lesssim \|\phi^n - \phi\|_\infty^{1-\frac{q}{q'}} \lesssim 2^{-n\delta(1-\frac{q}{q'})}.
  \end{equation}
  We also have from Lemma~\ref{lemma: BM satisfies RIE wrt jump times of Levy process} that, for almost every $\omega \in \Omega$, the sample path $W(\omega)$ satisfies Property \textup{(RIE)} relative to $p$ and $(\cP_L^n(\omega))_{n \in \N}$. Thus, by Proposition~\ref{prop: stability of RIE}, for almost every $\omega \in \Omega$, the sample path $L(\omega)$ satisfies Property \textup{(RIE)} relative to $p$ and $(\cP^n(\omega))_{n \in \N}$, which establishes part~(i).

  Since the sample paths of $W$ are almost surely $\frac{1}{p}$-H{\"o}lder continuous, it is straightforward to see that
  \begin{equation*}
    \|W^n - W\|_\infty \lesssim 2^{-\frac{n}{p}},
  \end{equation*}
  where the implicit multiplicative constant depends on the (random) H\"older constant of the path. Since $L = W + \phi$, we have that
  \begin{equation*}
    \|L^n - L\|_\infty \leq \|W^n - W\|_\infty + \|\phi^n - \phi\|_\infty \lesssim 2^{-\frac{n}{p}} + 2^{-n\delta}.
  \end{equation*}

  We recall from \eqref{eq: Borel Cantelli bound for BM part} that
  \begin{equation*}
    \bigg\|\int_0^\cdot W^n_u \otimes \d W_u - \int_0^\cdot W_u \otimes \d W_u\bigg\|_\infty \lesssim 2^{-\frac{n}{2}(1-\epsilon)} = 2^{-n(\frac{1}{p} - \gamma)}
  \end{equation*}
  for any $\gamma \in (0,\frac{1}{p})$. We obtained a bound for $\|\phi^n - \phi\|_{q'}$ in \eqref{eq: phi^n - phi q' bound}, and an analogous argument also shows that
  \begin{equation*}
    \|W^n - W\|_{p'} \leq \|W^n - W\|_\infty^{1-\frac{p}{p'}} \|W^n - W\|_p^{\frac{p}{p'}} \lesssim \|W^n - W\|_\infty^{1-\frac{p}{p'}} \lesssim 2^{-n(\frac{1}{p} - \frac{1}{p'})}.
  \end{equation*}
  Using the standard estimate for Young integrals (see, e.g., \cite[Proposition~2.4]{Friz2018}), similarly to the proof of Proposition~\ref{prop: stability of RIE}, we then obtain
  \begin{align*}
    &\bigg\|\int_0^\cdot L_u^n \otimes \d L_u - \int_0^\cdot L_u \otimes \d L_u\bigg\|_\infty\\
    &\lesssim \bigg\|\int_0^\cdot W_u^n \otimes \d W_u - \int_0^\cdot W_u \otimes \d W_u\bigg\|_\infty + \|W^n - W\|_{p'} \|\phi\|_q + \|\phi^n - \phi\|_{q'} (\|W\|_p + \|\phi\|_q)\\
    &\lesssim 2^{-n(\frac{1}{p} - \gamma)} + 2^{-n(\frac{1}{p} - \frac{1}{p'})} + 2^{-n\delta(1-\frac{q}{q'})}.
  \end{align*}
  Hence, by Theorem~\ref{thm: Euler scheme convergence}, we establish the estimate in part~(ii).
\end{proof}

In the following remark, we briefly discuss $\alpha$-stable L{\'e}vy processes.

\begin{remark}
Suppose now that $L$ were an $\alpha$-stable L{\'e}vy process for some $\alpha \in (0,2]$. That is, for all $a > 0$, there exists $c \in \R^d$ such that
\begin{equation*}
(L_{at})_{t \in [0,T]} \overset{d}{=} (a^{\frac{1}{\alpha}} L_t + ct)_{t \in [0,T]},
\end{equation*}
where we write $X \overset{d}{=} Y$ to mean that $X$ and $Y$ have the same distribution; see, e.g., \cite[Proposition~3.15]{Cont2004}. We now distinguish two cases:

In the case when $\alpha = 2$, $L$ is $\alpha$-stable if and only if it is Gaussian, that is, its characteristics are given by $(\lambda, \Sigma, 0)$; see, e.g., \cite[Proposition~3.15]{Cont2004}. It can thus be decomposed into the sum of a Brownian motion $W$ with covariance matrix $\Sigma$, and a linear drift term: $L_t = W_t + \lambda t$, for $t \in [0,T]$. In this case the SDE \eqref{eq:SDE} driven by $L$ can therefore be reformulated as an SDE driven by $W$ by simply absorbing the linear drift term $\lambda t$ into the drift of the SDE, and the resulting equation can then be treated as in Corollary~\ref{cor: convergence rate for Brownian motion}.

In the case when $\alpha \in (0,2)$, $L$ is $\alpha$-stable if and only if its characteristics are given by $(\lambda,0,\nu)$ (i.e., $L = \phi$ for some $\phi$ of the form in \eqref{eq: defn phi Levy}), and there exists a finite measure $\rho$ on $S$, a unit sphere on $\R^d$, such that
\begin{equation*}
\nu(B) = \int_S \int_0^\infty \1_B(r\xi) \hspace{1pt} \frac{\d r}{r^{1 + \alpha}} \hspace{1pt} \rho(\d \xi)
\end{equation*}
for all Borel sets $B$ on $\R^d$; see, e.g., \cite[Proposition~3.15]{Cont2004}.

We then have that $\int_{|x|< 1} |x|^q \hspace{1pt} \nu(\d x) < \infty$ for $q > \alpha$, and in particular that almost all sample paths of $L$ are of finite $q$-variation for $q \in (\alpha,2)$ if $\alpha \in [1,2)$, and are of finite $1$-variation if $\alpha < 1$. This then fits into the setting of Proposition~\ref{prop: Levy process has RIE}, and, since there is no Gaussian term, the resulting error estimate for the associated Euler scheme reduces to
\begin{equation*}
\|Y^n - Y\|_{p'} \leq C (2^{-n(1-\frac{1}{q'})} + 2^{-n\delta(1-\frac{q}{q'})(1-\frac{p}{p'})}), \qquad n \in \N,
\end{equation*}
Of course, in this case it is not necessary to utilize the rough path framework, since the integral $\int_0^t \sigma(s,Y_{s-}) \dd L_s$ in \eqref{eq:SDE} can be defined as a pathwise Young integral, and by discretizing this integral one could derive pathwise results using stability estimates for Young integrals.
\end{remark}

\subsection{C{\`a}dl{\`a}g semimartingales}

In this section, we consider the case when $X$ is a general c{\`a}dl{\`a}g semimartingale. As noted in Remark~\ref{remark: must include jump times}, to hope for pointwise convergence of the Euler scheme, we need to ensure that the sequence of partitions exhausts all the jump times of $X$. With this in mind, for each $n \in \N$, we introduce the stopping times $(\tau^n_k)_{k \in \N \cup \{0\}}$, such that $\tau^n_0 = 0$, and
\begin{equation}\label{eq: defn stopping times semimart}
  \tau^n_k = \inf \{t > \tau^n_{k-1} : \, |t - \tau^n_{k-1}| + |X_t - X_{\tau^n_{k-1}}| \geq 2^{-n}\} \wedge T, \qquad k \in \N.
\end{equation}
We then define a sequence of adapted partitions $(\cP_X^n)_{n \in \N}$ by
\begin{equation*}
  \cP_X^n = \{\tau^n_k : k \in \N \cup \{0\}\}.
\end{equation*}
Note that, for almost every $\omega \in \Omega$, $(\cP_X^n(\omega))_{n \in \N}$ is a sequence of (finite) partitions with vanishing mesh size. The next result verifies that $X$ satisfies Property \textup{(RIE)} relative to any $p \in (2,3)$ and $(\cP^n_X)_{n \in \N}$, and establishes the rate of convergence of the associated Euler scheme.

\begin{proposition}
  Let $p \in (2,3)$, and let $X$ be a $d$-dimensional c{\`a}dl{\`a}g semimartingale. Let $Y$ be the solution of the SDE \eqref{eq:SDE} driven by $X$, and let $Y^n$ be the corresponding Euler approximation along $\cP_X^n$, as defined in \eqref{eq:RDE Euler scheme}.
  \begin{itemize}
    \item[(i)] For almost every $\omega \in \Omega$, the sample path $X(\omega)$ satisfies Property \textup{(RIE)} relative to $p$ and $(\cP_X^n(\omega))_{n \in \N}$.
    \item[(ii)] For any $p' \in (p,3)$ and $q \in (1,2)$ such that $\frac{1}{p'} + \frac{1}{q} > 1$, and any $\epsilon \in (0,1)$, there exists a random variable $C$, which does not depend on $n$, such that
    \begin{equation}\label{eq: semimart p' est}
      \|Y^n - Y\|_{p'} \leq C (2^{-n (1 - \frac{1}{q})} + 2^{-n (1-\epsilon) (1 - \frac{p}{p'})}), \qquad n \in \N,
    \end{equation}
    and
    \begin{equation}\label{eq: semimart 3 est}
      \|Y^n - Y\|_3 \leq C 2^{-n (\frac{1}{3}-\epsilon)}, \qquad n \in \N.
    \end{equation}
  \end{itemize}
\end{proposition}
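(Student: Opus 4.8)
The plan is to adapt the arguments in the proof of part~(i) of Proposition~\ref{prop: Ito diffusion has RIE} and of Lemma~\ref{lemma: BM satisfies RIE wrt jump times of Levy process} to a general semimartingale, working with abstract (random) control functions in place of the explicit H\"older/linear controls available there. For part~(i), I would first reduce, via a localization argument along stopping times $S_m \uparrow \infty$ (on the event $\{S_m > T\}$ both the path $X$ on $[0,T]$ and the partition $\cP_X^n$ depend only on $X^{S_m}$), to the case $X = X_0 + A + M$ with $A$ of integrable total variation and $M$ a square-integrable martingale. The definition of the $\tau^n_k$ in \eqref{eq: defn stopping times semimart} directly gives $\|X^n - X\|_\infty \le 2^{-n}$ and $|\cP_X^n| \le 2^{-n}$, and a standard property of c\`adl\`ag paths makes $\cP_X^n$ almost surely a finite partition; this settles condition~(i) of Property~\textup{(RIE)}.

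For condition~(ii), I would write $\int_0^t X^n_u \otimes \d X_u - \int_0^t X_u \otimes \d X_u = \int_0^t (X^n_{u-} - X_{u-}) \otimes \d X_u$, note $|X^n_{u-} - X_{u-}| \le 2^{-n}$, and bound the martingale part by the Burkholder--Davis--Gundy inequality and the finite-variation part trivially, obtaining
\[
\E\bigg[\,\bigg\| \int_0^\cdot X^n_u \otimes \d X_u - \int_0^\cdot X_u \otimes \d X_u \bigg\|_\infty^2\,\bigg] \lesssim 2^{-2n}.
\]
Markov's inequality and the Borel--Cantelli lemma then yield, for any $\epsilon \in (0,1)$, that almost surely $\|\int_0^\cdot X^n_u \otimes \d X_u - \int_0^\cdot X_u \otimes \d X_u\|_\infty \lesssim 2^{-n(1-\epsilon)}$ for all large $n$; this gives condition~(ii) and the convergence rate used in part~(ii).

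Condition~(iii) is the core of the proof. Here I would use that the sample paths of a c\`adl\`ag semimartingale have finite $\rho$-variation almost surely for every $\rho \in (2,3)$, so $w_\rho(s,t) := \|X\|_{\rho,[s,t]}^\rho$ is a control, and that the It\^o lift $\X$ of \eqref{eq:semimartingale lift} lies in $D_2^{p/2}$ almost surely, so $w_\X(s,t) := \|\X\|_{p/2,[s,t]}^{p/2}$ is a control. Denoting by $\Xi^n_{k,\ell} := \int_{\tau^n_k}^{\tau^n_\ell} X^n_u \otimes \d X_u - X_{\tau^n_k} \otimes X_{\tau^n_k,\tau^n_\ell} = \sum_{j=k}^{\ell-1}(X_{\tau^n_j} - X_{\tau^n_k}) \otimes X_{\tau^n_j,\tau^n_{j+1}}$ the quantity appearing in \eqref{eq:RIE inequality}, iterating Chen's relation for $\X$ gives $\Xi^n_{k,\ell} = \X_{\tau^n_k,\tau^n_\ell} - \sum_{j=k}^{\ell-1} \X_{\tau^n_j,\tau^n_{j+1}}$, where the last sum equals the increment over $[\tau^n_k,\tau^n_\ell]$ of $\int_0^\cdot (X_{u-} - X^n_{u-}) \otimes \d X_u$ and is thus at most twice the quantity estimated in the previous paragraph. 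On the other hand, a discrete sewing estimate along the lines of \cite[Lemma~3.2]{Liu2018}---starting from $|\Xi^n_{k,\ell} - \Xi^n_{k,m} - \Xi^n_{m,\ell}| = |X_{\tau^n_k,\tau^n_m} \otimes X_{\tau^n_m,\tau^n_\ell}| \le w_\rho(\tau^n_k,\tau^n_m)^{1/\rho} w_\rho(\tau^n_m,\tau^n_\ell)^{1/\rho}$---gives $|\Xi^n_{k,\ell}| \lesssim N^{1-2/\rho} w_\rho(\tau^n_k,\tau^n_\ell)^{2/\rho}$ with $N := \ell - k$, while the stopping rule \eqref{eq: defn stopping times semimart} forces $N \lesssim 2^{n\rho}\big((\tau^n_\ell - \tau^n_k) + w_\rho(\tau^n_k,\tau^n_\ell)\big) + 1$, obtained by counting separately the sub-intervals on which the time-increment, respectively the oscillation of $X$, exceeds $2^{-n-1}$. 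Setting $c(s,t) := (t-s) + w_\rho(s,t)$ and distinguishing the ``coarse'' case $2^{-n} \ge c(\tau^n_k,\tau^n_\ell)^\kappa$ from the ``fine'' case $2^{-n} < c(\tau^n_k,\tau^n_\ell)^\kappa$---with $\rho$ close to $2$ and $\kappa$ chosen so that all exponents balance, e.g.\ $\rho = 2 + \tfrac{(p-2)(1-\epsilon)}{2}$ and $\kappa = \tfrac{2}{p(1-\epsilon)}$---the sewing and counting bounds (in the coarse case) and the bound $|\Xi^n_{k,\ell}| \le w_\X(\tau^n_k,\tau^n_\ell)^{2/p} + 2 \cdot 2^{-n(1-\epsilon)}$ together with $2^{-n} < c(\tau^n_k,\tau^n_\ell)^\kappa$ (in the fine case) both lead to $|\Xi^n_{k,\ell}|^{p/2} \lesssim c(\tau^n_k,\tau^n_\ell) + w_\X(\tau^n_k,\tau^n_\ell)$. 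Combined with $|X_{s,t}|^p \le \|X\|_{\rho,[s,t]}^p = w_\rho(s,t)^{p/\rho} \lesssim w_\rho(s,t)$, this shows condition~(iii) holds with the control $w(s,t) := C\big(w_\rho(s,t) + w_\X(s,t) + (t-s)\big)$ for a suitable random but finite $C$, after rescaling $w$ to obtain the normalization in \eqref{eq:RIE inequality}.

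Part~(ii) is then assembled as follows: the $(\cP_X^n)_{n \in \N}$ are adapted partitions with almost surely vanishing mesh, so by part~(i) and Lemma~\ref{lem:RDE and SDE agree} the SDE solution $Y$ agrees almost surely with the solution of the RDE~\eqref{eq:RDE Y} driven by the pathwise It\^o lift $\bX$, and Theorem~\ref{thm: Euler scheme convergence} applies with $|\cP_X^n| \le 2^{-n}$, $\|X^n - X\|_\infty \le 2^{-n}$, and $\|\int_0^\cdot X^n_u \otimes \d X_u - \int_0^\cdot X_u \otimes \d X_u\|_\infty \lesssim 2^{-n(1-\epsilon)}$; this yields $\|Y^n - Y\|_{p'} \lesssim 2^{-n(1-\frac1q)} + 2^{-n(1-\frac{p}{p'})} + 2^{-n(1-\epsilon)(1-\frac{p}{p'})}$, whence \eqref{eq: semimart p' est} (the second term being dominated by the third), and \eqref{eq: semimart 3 est} follows on letting $p \downarrow 2$, $p' \uparrow 3$, $q \downarrow \tfrac32$ and relabelling $\epsilon$. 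The main obstacle will be condition~(iii): carrying out the coarse/fine case-split purely at the level of control functions, with no H\"older regularity at hand, and tuning $\rho$, $\kappa$ and $\epsilon$ so that the blow-up factor $2^{n(\rho-2)}$ from the sewing bound is exactly absorbed in the coarse regime while the rate $2^{-n(1-\epsilon)}$ suffices in the fine regime; the passage from the $L^2$-estimate natural for semimartingales to the pathwise bound required by Property~\textup{(RIE)}, handled by Borel--Cantelli, is a secondary point.
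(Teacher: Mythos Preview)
Your proposal is correct and follows essentially the same approach as the paper: part~(ii) is identical, and for part~(i) the paper simply defers to \cite[Proposition~4.1]{Allan2023b}, whose argument is precisely the coarse/fine case-split you describe, i.e., the same template used in the proofs of Proposition~\ref{prop: Ito diffusion has RIE} and Lemma~\ref{lemma: BM satisfies RIE wrt jump times of Levy process} but with the H\"older control $|t-s|$ replaced by the random control $c(s,t) = (t-s) + \|X\|_{\rho,[s,t]}^\rho$ and with the counting bound $N \lesssim 2^{n\rho} c(\tau^n_k,\tau^n_\ell)$ coming from the definition of the stopping times. Your observation that the sewing step from \cite[Lemma~3.2]{Liu2018} goes through with an arbitrary control (via successive removal of the point minimizing $w_\rho(\tau^n_{m-1},\tau^n_{m+1})$) is exactly what is needed to replace H\"older regularity here.
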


\begin{proof}
  \emph{(i):}
  The proof is just a slight modification of the proof of \cite[Proposition~4.1]{Allan2023b}, and is therefore omitted here for brevity. It is actually slightly easier, as here we do not require the sequence of partitions to be nested, and the sequence of stopping times in \eqref{eq: defn stopping times semimart} is constructed to ensure that the mesh size vanishes, even if $X$ exhibits intervals of constancy.

  \emph{(ii):} 
  By the definition of the partition $\cP_X^n$, it is clear that
  \begin{equation*}
    \|X^n - X\|_\infty \leq 2^{-n}.
  \end{equation*} 
  By an application of the Burkholder--Davis--Gundy inequality and the Borel--Cantelli lemma, as in the proof of \cite[Proposition~3.4]{Liu2018}, one can show that
  \begin{equation*}
    \bigg\|\int_0^{\cdot} X^n_{u-} \otimes \d X_u - \int_0^{\cdot} X_{u-} \otimes \d X_u\bigg\|_\infty \lesssim 2^{-n(1-\epsilon)}, \qquad n \in \N,
  \end{equation*}
  where the implicit multiplicative constant is a random variable which does not depend on $n$.

  It thus follows from Theorem~\ref{thm: Euler scheme convergence} that
  \begin{equation*}
    \|Y^n - Y\|_3 \leq \|Y^n - Y\|_{p'} \lesssim  2^{-n(1 - \frac{1}{q})} + 2^{-n(1 - \frac{p}{p'})} + 2^{-n(1 - \epsilon)(1 - \frac{p}{p'})},
  \end{equation*}  
  which leads to \eqref{eq: semimart p' est}. Choosing $p$ sufficiently close to $2$, $p'$ to $3$, and $q$ to $\frac{3}{2}$, and replacing $\epsilon$ by $3\epsilon$, then reveals \eqref{eq: semimart 3 est}.
\end{proof}

\section{Applications to differential equations driven by non-semimartingales}\label{sec:RSDE}

While in the previous section we considered SDEs driven by various classes of semimartingales, like the general theory of rough paths, the deterministic theory developed in Section~\ref{sec:RDE} is not limited to the semimartingale framework. In this section we investigate Property \textup{(RIE)} in the context of ``mixed'' and ``rough'' SDEs. The main insight is again that the random driving signals of these equations do, indeed, satisfy Property \textup{(RIE)} and, thus, the pathwise convergence results regarding the Euler scheme, as presented in Theorem~\ref{thm: Euler scheme convergence} and Proposition~\ref{prop:approximate Euler scheme}, are applicable.

Further examples of stochastic processes which fulfill Property \textup{(RIE)} almost surely include \emph{$p$-semimartingales} (also known as \emph{Young semimartingales}) in the sense of Norvai\v{s}a~\cite{Norvaivsa2003}, as well as \emph{typical price paths} in the sense of Vovk, relative to suitable sequences of adapted partitions. The pathwise convergence of the Euler scheme is thus immediately applicable to differential equations driven by such $p$-semimartingales \cite{Kubilius2002} and typical price paths \cite{Bartl2019a}.

\subsection{Mixed stochastic differential equations}

Differential equations driven by both a Brownian motion as well as a fractional Brownian motion with Hurst parameter $H > \frac{1}{2}$ are classical objects in stochastic analysis; see, e.g., \cite{Zahle2001,Mishura2011}. More precisely, a ``mixed'' stochastic differential equation (mixed SDE) is given by
\begin{equation}\label{eq: mixed SDE}
  Y_t = y_0 + \int_0^t b(s,Y_s) \dd s + \int_0^t \sigma_1(s,Y_s) \dd W_s + \int_0^t \sigma_2(s,Y_s) \dd W^H_s, \qquad t \in [0,T],
\end{equation}
where $b \in C^2_b(\R^{k+1};\R^k)$, $\sigma_1 \in C^3_b(\R^{k+1};\mathcal{L}(\R^{d_1};\R^k))$, $\sigma_2 \in C^3_b(\R^{k+1};\mathcal{L}(\R^{d_2};\R^k))$ and $y_0 \in \R^k$. Here, $W$ is a $d_1$-dimensional standard Brownian motion, and $W^H$ is a $d_2$-dimensional fractional Brownian motion with Hurst parameter $H > \frac{1}{2}$, which are independent and both defined on a filtered probability space $(\Omega,\cF,(\cF_t)_{t \in [0,T]},\P)$ satisfying the usual conditions.

The mixed SDE \eqref{eq: mixed SDE} lies outside the semimartingale framework, but there are various ways to provide a rigorous meaning to its solution. Here we consider the mixed SDE \eqref{eq: mixed SDE} as a random RDE, driven by the It\^o rough path lift of $(W,W^H)$, the existence of which follows from Lemma~\ref{lemma: W, W^H satisfies RIE} below. In particular, it then follows from Theorem~\ref{thm: RDE} that there exists a unique solution $Y$ to \eqref{eq: mixed SDE}.

\begin{lemma}\label{lemma: W, W^H satisfies RIE}
  Let $W$ be a standard Brownian motion, and let $W^H$ be a fractional Brownian motion with Hurst parameter $H \in (\frac{1}{2},1)$. Let $p \in (2,3)$ such that $\frac{1}{p} + H > 1$, and let $\cP^n = \{0 = t_0^n < t_1^n < \cdots < t_{N_n}^n = T\}$, $n \in \N$, be a sequence of equidistant partitions of the interval $[0,T]$, so that, for each $n \in \N$, there exists some $\pi_n > 0$ such that $t^n_{i+1} - t^n_i = \pi_n$ for each $0 \leq i < N_n$. If $\pi_n^{2 - \frac{4}{p}} \log(n) \to 0$ as $n \to \infty$, then, for almost every $\omega \in \Omega$, the sample path $(W(\omega),W^H(\omega))$ satisfies Property \textup{(RIE)} relative to $p$ and $(\cP^n)_{n \in \N}$.
\end{lemma}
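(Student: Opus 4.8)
The plan is to treat $X := (W, W^H)$ as a single $\R^{d_1+d_2}$-valued path and to verify conditions (i)--(iii) of Property \textup{(RIE)} via a block decomposition of the tensor product $X \otimes \d X$: the ``$W$--$W$'' block is governed by the It\^o Brownian rough path and is handled exactly as in Proposition~\ref{prop: BM satisfies RIE}, while the ``$W^H$--$W^H$'' block and the two mixed blocks are handled by Young integration, essentially as in the proof of Proposition~\ref{prop: stability of RIE}. Since $\frac1p + H > 1$ and $p \in (2,3)$ (so $1 - \frac1p \in (\frac12,\frac23)$), I first fix $\gamma \in (1 - \frac1p, H)$ and set $q := \frac1\gamma$; then $q \in (1,2)$, $\frac1p + \frac1q = \frac1p + \gamma > 1$ and, crucially, $q < p$. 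By the Kolmogorov continuity theorem, almost every sample path of $W^H$ is $\gamma$-H\"older continuous, hence of finite $q$-variation, and I put $w_{W^H}(s,t) := \|W^H\|_{q,[s,t]}^q$, which is an almost surely finite control function. By Proposition~\ref{prop: BM satisfies RIE}, the hypothesis $\pi_n^{2-\frac4p}\log(n)\to 0$ guarantees that almost every sample path of $W$ satisfies Property \textup{(RIE)} relative to $p$ and $(\cP^n)_{n\in\N}$, with some associated control function $w_W$; all arguments below take place on the full-measure event where both hold.

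Condition (i) is immediate, since $W$ and $W^H$ have continuous sample paths on the compact interval $[0,T]$, whence $W^n \to W$ and $(W^H)^n \to W^H$ uniformly and so $X^n = (W^n,(W^H)^n) \to X$ uniformly. For condition (ii) I would decompose the Riemann sum $\int_0^t X^n_u \otimes \d X_u = \sum_k X_{t^n_k} \otimes X_{t^n_k\wedge t,\,t^n_{k+1}\wedge t}$ into its four blocks. The $W$--$W$ block converges uniformly to the It\^o integral $\int_0^t W_u \otimes \d W_u$ by Proposition~\ref{prop: BM satisfies RIE}; the $W^H$--$W^H$ block and the two mixed blocks are left-point Riemann sums of $W^H$ against $\d W^H$, of $W$ against $\d W^H$, and of $W^H$ against $\d W$, and since $\frac2q > 1$ and $\frac1p + \frac1q > 1$ they converge uniformly in $t$ to the corresponding Young integrals by standard Young integration theory (see \cite{Friz2018}). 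Summing the four blocks yields the uniform convergence of $\int_0^\cdot X^n_u \otimes \d X_u$, whose limit we take as the definition of $\int_0^\cdot X_u \otimes \d X_u$.

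The bulk of the work is condition (iii). For the first term, $|X_{s,t}|^p \lesssim |W_{s,t}|^p + |W^H_{s,t}|^p \leq w_W(s,t) + w_{W^H}(s,t)^{p/q}$, and $(s,t)\mapsto w_{W^H}(s,t)^{p/q}$ is a control function because $\frac pq = p\gamma > 1$. For the discretized second level, I would write, for partition points $t^n_k < t^n_\ell$, the block form of $\int_{t^n_k}^{t^n_\ell} X^n_{t^n_k,u} \otimes \d X_u$: its $W$--$W$ block has $\frac p2$-th power bounded by $w_W(t^n_k,t^n_\ell)$ by condition (iii) for $W$, while for the two mixed blocks and the $W^H$--$W^H$ block I would apply the standard Young estimate \cite[Proposition~2.4]{Friz2018}, using that $\|W^n\|_{p,[t^n_k,t^n_\ell]} \leq \|W\|_{p,[t^n_k,t^n_\ell]}$ and $\|(W^H)^n\|_{q,[t^n_k,t^n_\ell]} \leq \|W^H\|_{q,[t^n_k,t^n_\ell]}$ (valid because the endpoints lie in $\cP^n$), to bound their $\frac p2$-th powers, up to a constant depending on $p$ and $q$, by $w_W(t^n_k,t^n_\ell)^{1/2} w_{W^H}(t^n_k,t^n_\ell)^{p/(2q)}$ and by $w_{W^H}(t^n_k,t^n_\ell)^{p/q}$. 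Since $q < p$ we have $\frac12 + \frac p{2q} > 1$ and $\frac pq > 1$, so the products $w_W^{1/2} w_{W^H}^{p/(2q)}$ and $w_{W^H}^{p/q}$ are again control functions, exactly as in the proof of Proposition~\ref{prop: stability of RIE}. Hence a control function of the form $w := C\big(w_W + w_{W^H} + w_W^{1/2} w_{W^H}^{p/(2q)} + w_{W^H}^{p/q}\big)$, with $C = C(p,q)$ chosen so that the normalization $\leq 1$ holds, witnesses condition (iii), completing the verification of Property \textup{(RIE)} for $X = (W,W^H)$.

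I do not expect a genuinely new difficulty here: the hypothesis $\frac1p + H > 1$ is used only to choose $\gamma$ so that $q = \frac1\gamma < p$ together with $\frac1p + \frac1q > 1$, which is what makes all three off-diagonal Young blocks tractable, and the hypothesis $\pi_n^{2-\frac4p}\log(n)\to 0$ is used only to invoke Proposition~\ref{prop: BM satisfies RIE} for the diagonal Brownian block. The ``main obstacle'', such as it is, is the bookkeeping in condition (iii) --- in particular checking that the products of control functions coming from the off-diagonal Young estimates are superadditive (which hinges on $q < p$) and keeping the localized $p$- and $q$-variation norms of the piecewise-constant approximations anchored at partition points so that the domination $\|W^n\|_{p,[t^n_k,t^n_\ell]} \leq \|W\|_{p,[t^n_k,t^n_\ell]}$ (and its analogue for $W^H$) holds exactly. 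The two mixed blocks $\int W^n \otimes \d W^H$ and $\int (W^H)^n \otimes \d W$ are the only genuinely ``mixed'' contributions, and both are handled by the single Young estimate above.
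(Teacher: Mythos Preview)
Your proof is correct and is essentially the same argument as the paper's, just unpacked: the paper applies Proposition~\ref{prop: stability of RIE} directly to $X=(W,0)$ and $\phi=(0,W^H)$, after a one-line interpolation to check $\|(W^H)^n - W^H\|_{q'}\to 0$, whereas you inline the four-block Young decomposition that constitutes the proof of that proposition. The only economy you miss is that Proposition~\ref{prop: stability of RIE} already packages all of your condition~(iii) bookkeeping (including the superadditivity of $w_W^{1/2}w_{W^H}^{p/(2q)}$), so you could replace your entire argument by a citation once you verify its hypothesis $\|\phi^n-\phi\|_{q'}\to 0$ via interpolation.
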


\begin{proof}
  We first note that the process $(W,0)$ satisfies the hypotheses of Theorem~\ref{prop: BM satisfies RIE}, and thus that almost all of its sample paths satisfy Property \textup{(RIE)} relative to $p$ and $(\cP^n)_{n \in \N}$. Let $\frac{1}{H} < q < q' < 2$ such that $\frac{1}{p} + \frac{1}{q'} > 1$. Since $\frac{1}{q} < H$, it is well known that the sample paths of $(0,W^H)$ are almost surely $\frac{1}{q}$-H{\"o}lder continuous, and hence that $\|W^H\|_q < \infty$. Writing $W^{H,n}$ for the usual piecewise constant approximation of $W^H$ along $\cP^n$, we have by interpolation that
  \begin{equation*}
    \|W^{H,n} - W^H\|_{q'} \leq \|W^{H,n} - W^H\|_\infty^{1 - \frac{q}{q'}} \|W^{H,n} - W^H\|_q^{\frac{q}{q'}} \lesssim \|W^{H,n} - W^H\|_\infty^{1 - \frac{q}{q'}} \, \longrightarrow \, 0
  \end{equation*}
  as $n \to \infty$. The result then follows by applying Proposition~\ref{prop: stability of RIE} to $(W,0) + (0,W^H)$.
\end{proof}

Of course, since here we consider Hurst parameters $H > \frac{1}{2}$, the trajectories of $W^H$ have in particular finite $q$-variation for any $q \in (\frac{1}{H},2)$, so we could alternatively define the integral $\int_0^t \sigma_2(s,Y_s) \dd W^H_s$ in \eqref{eq: mixed SDE} as a pathwise Young integral, and by discretizing this integral one could in principle derive analogous pathwise convergence results; cf.~Remark~\ref{remark: absorb young part into drift instead}.

\subsection{Rough stochastic differential equations}

Rough stochastic differential equations (rough SDEs) are differential equations driven by both a rough path and a semimartingale. These equations first appeared in the context of robust stochastic filtering---see \cite{Crisan2013,Diehl2015}---and were recently studied in a general form in \cite{Friz2021}. In this section we will adapt the setting of \cite{Diehl2015}, which allows to treat H\"older continuous rough paths and Brownian motion as driving signals.

\medskip

We let $\eta \colon [0,T] \to \R^d$ be a deterministic path which is $\frac{1}{p}$-H{\"o}lder continuous for some $p \in (2,3)$, and which satisfies Property \textup{(RIE)} relative to $p$ and the dyadic partitions $(\cP_D^n)_{n \in \N}$, as defined in \eqref{eq: dyadic partitions}. We write $\boldsymbol{\eta} = (\boldsymbol{\eta}^1,\boldsymbol{\eta}^2)$ for the canonical rough path lift of $\eta$, with $\boldsymbol{\eta}^2$ defined as in \eqref{eq:RIE rough path}, so that $\boldsymbol{\eta}^2_{s,t} = \int_s^t \eta_{s,u} \otimes \d \eta_u$ for each $(s,t) \in \Delta_T$. We also let $W$ be an $\R^e$-valued Brownian motion. For vector fields $a \in C^2_b(\R^k;\R^k)$, $b \in C^3_b(\R^k;\cL(\R^d;\R^k))$ and $c \in C^3_b(\R^k;\cL(\R^e;\R^k))$, and an initial value $y_0 \in \R^k$, we then consider the rough SDE
\begin{equation}\label{eq: rough SDE}
  Y_t = y_0 + \int_0^t a(Y_s) \dd s + \int_0^t b(Y_s) \dd \boldsymbol{\eta}_s + \int_0^t c(Y_s) \dd W_s, \qquad t \in [0,T].
\end{equation}

To give a rigorous meaning to the rough SDE \eqref{eq: rough SDE}, following the method introduced in \cite{Diehl2015}, we need to construct a suitable joint rough path lift $\boldsymbol{\Lambda}(\omega)$ above the $\R^{d+e}$-valued path $(\eta,W(\omega))$ for almost every $\omega \in \Omega$. Indeed, the (pathwise) unique solution to the random RDE
\begin{equation*}
  Y_t = y_0 + \int_0^t a(Y_s) \dd s + \int_0^t (b,c)(Y_s) \dd \boldsymbol{\Lambda}_s, \qquad t \in [0,T],
\end{equation*}
is then defined to be the solution to the rough SDE \eqref{eq: rough SDE}.

\medskip

To construct the It\^o rough path lift of $(\eta,W)$, we need the existence of the quadratic covariation of $\eta$ and $W$ along the dyadic partitions. More precisely, writing $\cP_D^n = \{0 = t^n_0 < t^n_1 < \cdots < t^n_{2^n} = T\}$ with $t^n_k = k 2^{-n} T$, we need to establish that, for almost every $\omega \in \Omega$, the limit
\begin{equation}\label{eq: eta W quad covar}
  \langle \eta, W(\omega) \rangle_t := \lim_{n \to \infty} \sum_{k=0}^{2^n - 1} \eta_{t^n_k \wedge t,t^n_{k+1} \wedge t} \otimes W_{t^n_k \wedge t,t^n_{k+1} \wedge t}(\omega)
\end{equation}
exists and holds uniformly for $t \in [0,T]$.

\begin{lemma}\label{lemma: the existence of quadratic covariation}
  Let $\alpha \in (0,1]$, let $\eta \colon [0,T] \to \R$ be an $\alpha$-H\"older continuous deterministic path, and let $W$ be a one-dimensional Brownian motion. Then, for almost every $\omega \in \Omega$, the quadratic covariation of $\eta$ and $W(\omega)$ along the dyadic partitions, in the sense of \eqref{eq: eta W quad covar}, exists, and satisfies $\langle \eta, W(\omega) \rangle_t = 0$ for all $t \in [0,T]$.
\end{lemma}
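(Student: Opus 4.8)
The plan is to show that the sums
\[S^n_t := \sum_{k=0}^{2^n-1} \eta_{t^n_k \wedge t,\, t^n_{k+1}\wedge t}\, W_{t^n_k\wedge t,\, t^n_{k+1}\wedge t}\]
appearing on the right-hand side of \eqref{eq: eta W quad covar} converge to $0$ uniformly in $t \in [0,T]$, almost surely; the value of the limit is then forced to be $\langle\eta,W(\omega)\rangle_t = 0$ for all $t$. First I would split off a boundary term: for $t \in [0,T]$, let $j=j(t)$ be the largest index with $t^n_j \leq t$, and set $M^n_j := \sum_{k=0}^{j-1}\eta_{t^n_k,t^n_{k+1}} W_{t^n_k,t^n_{k+1}}$ for $j = 0, 1, \ldots, 2^n$, so that $S^n_t = M^n_{j(t)} + \eta_{t^n_{j(t)},t}\, W_{t^n_{j(t)},t}$. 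Using that $\eta$ is $\alpha$-H\"older continuous, the boundary term is bounded by $|\eta_{t^n_{j(t)},t}\, W_{t^n_{j(t)},t}| \leq \|\eta\|_{\alpha\text{-H\"ol}}\, (2^{-n}T)^{\alpha}\, \|W\|_\infty$, and since $\|W\|_\infty < \infty$ almost surely, we get $\sup_{t\in[0,T]} |\eta_{t^n_{j(t)},t}\, W_{t^n_{j(t)},t}| \to 0$ almost surely as $n\to\infty$.

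The main point is to control $\max_{0\le j\le 2^n} |M^n_j| = \sup_{t\in[0,T]} |M^n_{j(t)}|$. Since $\eta$ is deterministic and $W$ is a Brownian motion (hence has independent, mean-zero increments), the sequence $(M^n_j)_{j=0}^{2^n}$ is a discrete-time martingale with respect to $(\mathcal{F}_{t^n_j})_{j}$. By Doob's $L^2$ maximal inequality, $\E\big[\max_{0\le j\le 2^n}|M^n_j|^2\big] \leq 4\,\E\big[|M^n_{2^n}|^2\big]$, and since $M^n_{2^n}$ is a sum of independent centered random variables, $\E\big[|M^n_{2^n}|^2\big] = \sum_{k=0}^{2^n-1} \eta_{t^n_k,t^n_{k+1}}^2\,(t^n_{k+1}-t^n_k) \leq \|\eta\|_{\alpha\text{-H\"ol}}^2\, T^{2\alpha+1}\, 2^{-2n\alpha}$. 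As $\alpha > 0$, summing over $n$ gives $\sum_{n\in\N} \E\big[\max_{0\le j\le 2^n}|M^n_j|^2\big] < \infty$, hence $\sum_{n\in\N}\max_{0\le j\le 2^n}|M^n_j|^2 < \infty$ almost surely, and in particular $\max_{0\le j\le 2^n}|M^n_j| \to 0$ almost surely as $n\to\infty$. Combining this with the boundary-term estimate yields $\sup_{t\in[0,T]}|S^n_t| \to 0$ almost surely, which proves both the existence of the limit in \eqref{eq: eta W quad covar} and that it vanishes identically.

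I do not expect a serious obstacle here. The only slightly delicate point is upgrading convergence at the fixed endpoint $t=T$ (where $S^n_T = M^n_{2^n}$ is a centered Gaussian with variance of order $2^{-2n\alpha}$) to convergence that is uniform in $t$; this is handled cleanly by observing that the partial sums $M^n_j$ form a martingale and invoking Doob's maximal inequality, after which the Borel--Cantelli lemma (or monotone convergence) finishes the argument. An alternative would be to control the running maximum by interpolation, but the martingale route is shorter.
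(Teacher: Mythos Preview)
Your proof is correct and follows essentially the same approach as the paper: split off the boundary term, observe that the partial sums form a discrete-time martingale, apply a maximal inequality to get an $L^2$ bound of order $2^{-2n\alpha}$, and conclude via Borel--Cantelli. The only cosmetic differences are that the paper invokes the Burkholder--Davis--Gundy inequality (where Doob's $L^2$ inequality suffices) and uses Markov's inequality with an explicit threshold before Borel--Cantelli (where you use direct summability of the expectations), and that the paper bounds the boundary term via the $\beta$-H\"older continuity of $W$ rather than its supremum---your use of $\|W\|_\infty$ is simpler and equally valid.
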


\begin{proof}
We consider the discrete-time martingale given by $t \mapsto \sum_{k \hspace{1pt} : \hspace{1pt} t^n_{k+1} \leq t} \eta_{t^n_k,t^n_{k+1}} W_{t^n_k,t^n_{k+1}}$ for $t \in \cP_D^n$, for some fixed $n \in \N$. By the Burkholder--Davis--Gundy inequality, we have that
  \begin{align*}
    \E \bigg[\bigg\|\sum_{k \hspace{1pt} : \hspace{1pt} t^n_{k+1} \leq \cdot} \eta_{t^n_k,t^n_{k+1}} W_{t^n_k,t^n_{k+1}}\bigg\|_\infty^2\bigg] &\lesssim \E \bigg[\sum_{k=0}^{2^n - 1} (\eta_{t^n_k,t^n_{k+1}} W_{t^n_k,t^n_{k+1}})^2\bigg] = \sum_{k=0}^{2^n - 1} (\eta_{t^n_k,t^n_{k+1}})^2 (t^n_{k+1} - t^n_k)\\
    &\lesssim \sum_{k=0}^{2^n - 1} (t^n_{k+1} - t^n_k)^{1 + 2\alpha} \lesssim (2^{-n} T)^{2\alpha} \sum_{k=0}^{2^n - 1} (t^n_{k+1} - t^n_k) \lesssim 2^{-2n\alpha}.
  \end{align*}
  For any $\epsilon \in (0,1)$, we then have, by Markov's inequality, that
  \begin{equation*}
    \P \bigg(\bigg\|\sum_{k \hspace{1pt} : \hspace{1pt} t^n_{k+1} \leq \cdot} \eta_{t^n_k,t^n_{k+1}} W_{t^n_k,t^n_{k+1}}\bigg\|_\infty \geq 2^{-n \alpha (1-\epsilon)}\bigg) \lesssim 2^{-2n \alpha \epsilon},
  \end{equation*}
  and the Borel--Cantelli lemma then implies that
  \begin{equation*}
    \bigg\|\sum_{k \hspace{1pt} : \hspace{1pt} t^n_{k+1} \leq \cdot} \eta_{t^n_k,t^n_{k+1}} W_{t^n_k,t^n_{k+1}}\bigg\|_\infty \lesssim 2^{-n \alpha (1-\epsilon)},
  \end{equation*}
  where the implicit multiplicative constant is a random variable which does not depend on $n$.

  For a given $t \in [0,T]$ and $n \in \N$, let $k_0$ be such that $t \in [t^n_{k_0},t^n_{k_0+1}]$. Since $\eta$ is $\alpha$-H\"older continuous, and the sample paths of $W$ are almost surely $\beta$-H\"older continuous for any $\beta \in (0,\frac{1}{2})$, we have that
  \begin{equation*}
    |\eta_{t^n_{k_0},t} W_{t^n_{k_0},t}| \lesssim (t - t^n_{k_0})^{\alpha + \beta} \lesssim 2^{-n (\alpha + \beta)}.
  \end{equation*}
  We thus have the bound
  \begin{align*}
    \bigg|\sum_{k=0}^{2^n - 1} \eta_{t^n_k \wedge t,t^n_{k+1} \wedge t} W_{t^n_k \wedge t,t^n_{k+1} \wedge t}\bigg| &\leq \bigg|\sum_{k \hspace{1pt} : \hspace{1pt} t^n_{k+1} \leq t} \eta_{t^n_k,t^n_{k+1}} W_{t^n_k,t^n_{k+1}}\bigg| + |\eta_{t^n_{k_0},t} W_{t^n_{k_0},t}|\\
    &\lesssim 2^{-n \alpha (1-\epsilon)} + 2^{-n (\alpha + \beta)},
  \end{align*}
  where the implicit multiplicative constant is a random variable which does not depend on $t$ or $n$. It follows that, almost surely,
  \begin{equation*}
    \sum_{k=0}^{2^n - 1} \eta_{t^n_k \wedge t,t^n_{k+1} \wedge t} W_{t^n_k \wedge t,t^n_{k+1} \wedge t} \, \longrightarrow \, 0 \qquad \text{as} \quad n \, \longrightarrow \, \infty,
  \end{equation*}
  uniformly for $t \in [0,T]$.
\end{proof}

It is shown in \cite[Theorem~1]{Diehl2015}, with integrals defined in the Stratonovich sense, that an analogous object to the process $\boldsymbol{\Lambda}$ described in \eqref{eq: Lambda^2 representation} below provides a geometric rough path lift of $(\eta,W)$. In the next theorem we establish that $\boldsymbol{\Lambda}$ is the It\^o rough path lift of $(\eta,W)$, and moreover that it may be obtained as the canonical lift via Property \textup{(RIE)}, thus making our convergence analysis of the Euler scheme applicable to the rough SDE \eqref{eq: rough SDE}.

\begin{theorem}\label{theorem: joint lift RIE}
  Let $p \in (2,3)$. Let $\eta$ be a $\frac{1}{p}$-H\"older continuous $\R^d$-valued path which satisfies Property \textup{(RIE)} relative to $p$ and the sequence of dyadic partitions $(\cP_D^n)_{n \in \N}$, and write $\boldsymbol{\eta} = (\boldsymbol{\eta}^{1},\boldsymbol{\eta}^{2})$ for the canonical rough path lift of $\eta$, so that $\boldsymbol{\eta}^{1} = \eta$, and $\boldsymbol{\eta}^{2}_{s,t} = \int_s^t \eta_{s,u} \otimes \d \eta_u$, defined as in \eqref{eq:RIE rough path}, for every $(s,t) \in \Delta_T$. Let $W$ be an $\R^e$-valued Brownian motion, and write $\bW = (W,\W)$ for the It\^o rough path lift of $W$, so that $\W_{s,t} = \int_s^t W_{s,u} \otimes \d W_u$, defined as an It\^o integral, for every $(s,t) \in \Delta_T$.

  For any $p' \in (p,3)$ and almost every $\omega \in \Omega$, the $\R^{d+e}$-valued path $(\eta,W(\omega))$ satisfies Property \textup{(RIE)} relative to $p'$ and $(\cP_D^n)_{n \in \N}$.

  Moreover, for almost every $\omega \in \Omega$, the canonical rough path lift $\boldsymbol{\Lambda}(\omega) = (\boldsymbol{\Lambda}^{1}(\omega),\boldsymbol{\Lambda}^{2}(\omega)) \in \R^{d+e} \oplus \R^{(d+e) \times (d+e)}$ of $(\eta,W(\omega))$ (constructed via Property \textup{(RIE)} as in \eqref{eq:RIE rough path}) is given by $\boldsymbol{\Lambda}^{1}(\omega) = (\eta,W(\omega))$, and
  \begin{equation}\label{eq: Lambda^2 representation}
    \boldsymbol{\Lambda}^2_{s,t} = \bigg(\begin{array}{cc}
    \boldsymbol{\eta}^2_{s,t} & \int_s^t \eta_{s,u} \otimes \dd W_u\\
    W_{s,t} \otimes \eta_{s,t} - (\int_s^t \eta_{s,u} \otimes \dd W_u)^\top & \W_{s,t}
    \end{array}\bigg)
  \end{equation}
  for every $(s,t) \in \Delta_T$, where $\int_s^t \eta_{s,u} \otimes \d W_u$ is defined as an It\^o integral, and $(\cdot)^\top$ denotes matrix transposition.
\end{theorem}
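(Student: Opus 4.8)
The plan is to verify directly that $(\eta,W(\omega))$ satisfies Property \textup{(RIE)} relative to $p'$ and $(\cP_D^n)_{n\in\N}$, by treating separately the four blocks of the matrix-valued Riemann sums $\int_0^t(\eta^n,W^n)_u\otimes\d(\eta,W)_u$ (the blocks being $\int_0^t\eta^n_u\otimes\d\eta_u$, $\int_0^t\eta^n_u\otimes\d W_u$, $\int_0^t W^n_u\otimes\d\eta_u$, $\int_0^t W^n_u\otimes\d W_u$), and reading off their limits along the way so that formula \eqref{eq: Lambda^2 representation} drops out at the end. First, since $W$ is an It\^o process of the form in \eqref{eq: Ito process} (with vanishing drift and identity diffusion coefficient), Proposition~\ref{prop: Ito diffusion has RIE} gives that $W$ itself satisfies Property \textup{(RIE)} relative to $p'$ and $(\cP_D^n)$, with a control of the form $w_W(s,t)=c(\omega)|t-s|$, and that its \textup{(RIE)}-canonical lift agrees with the It\^o lift $\W$ (cf.\ also Lemma~\ref{lem:RDE and SDE agree}(i)); $\eta$ satisfies Property \textup{(RIE)} relative to $p$, hence to $p'$, with some control $w_\eta$. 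Condition (i) is then immediate: $\eta^n\to\eta$ uniformly by Property \textup{(RIE)} for $\eta$, and $W^n\to W$ uniformly by uniform continuity of Brownian paths.

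For condition (ii), the diagonal blocks converge uniformly by Property \textup{(RIE)} for $\eta$ and for $W$ respectively. For the $\eta$-against-$W$ block, writing $\gamma^n$ for the left-endpoint map, the difference between the Riemann sum and the It\^o integral $\int_0^t\eta_u\otimes\d W_u$ equals $-\int_0^t(\eta_u-\eta_{\gamma^n(u)})\otimes\d W_u$; the Burkholder--Davis--Gundy (BDG) inequality and $\tfrac1p$-H\"older continuity of $\eta$ give $\E[\sup_t|\cdot|^2]\lesssim 2^{-2n/p}$, and Markov's inequality together with the Borel--Cantelli lemma then yield uniform almost sure convergence with an explicit polynomial rate in $2^{-n}$. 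The $W$-against-$\eta$ block is handled by discrete integration by parts: telescoping $\sum_k(W\otimes\eta)_{t^n_k\wedge t,\,t^n_{k+1}\wedge t}$ expresses its Riemann sum through $W_t\otimes\eta_t$, the transpose of the previous block's Riemann sum, and the discrete quadratic covariation $\sum_k\eta_{t^n_k\wedge t,\,t^n_{k+1}\wedge t}\otimes W_{t^n_k\wedge t,\,t^n_{k+1}\wedge t}$, the last of which converges to $\langle\eta,W\rangle\equiv 0$ uniformly in $t$ by Lemma~\ref{lemma: the existence of quadratic covariation} (applied componentwise), up to a partial-subinterval remainder that is negligible by joint H\"older continuity. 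Hence this block converges uniformly, almost surely, to $W_t\otimes\eta_t-\big(\int_0^t\eta_u\otimes\d W_u\big)^\top$.

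For condition (iii) I take the control $w(s,t):=C(\omega)\big(w_\eta(s,t)+|t-s|\big)$, superadditive as a sum of controls. The path estimate follows from $\tfrac1p$-H\"older continuity of $\eta$ and $\beta$-H\"older continuity of $W$ (any $\beta<\tfrac12$), giving $|(\eta,W)_{s,t}|^{p'}\lesssim|t-s|$. For the integral estimate one bounds the four blocks of $\int_{t^n_k}^{t^n_\ell}(\eta^n,W^n)_u\otimes\d(\eta,W)_u-(\eta,W)_{t^n_k}\otimes(\eta,W)_{t^n_k,t^n_\ell}$: the diagonal blocks by Property \textup{(RIE)} for $\eta$ and for $W$; the $\eta$-against-$W$ block by writing it as $\int_{t^n_k}^{t^n_\ell}\eta_{t^n_k,u}\otimes\d W_u-\int_{t^n_k}^{t^n_\ell}(\eta_u-\eta_{\gamma^n(u)})\otimes\d W_u$, bounding the first term through the Kolmogorov criterion for rough paths (cf.\ \cite[Theorem~3.1]{Friz2020}), as its $2m$-th moments are $\lesssim|t-s|^{m(1+2/p)}$, and the second term uniformly over $n$ and over the dyadic pairs $k<\ell$ via Markov and Borel--Cantelli using $|t^n_\ell-t^n_k|\ge 2^{-n}T$ and the inequality $\tfrac12+\tfrac1p>\tfrac2p>\tfrac2{p'}$; and the $W$-against-$\eta$ block by reducing it, through the discrete integration-by-parts identity, to the transpose of the previous bound, to $|W_{t^n_k,t^n_\ell}\otimes\eta_{t^n_k,t^n_\ell}|$, and to the discrete quadratic covariation over $[t^n_k,t^n_\ell]$, each of which is $\lesssim|t^n_\ell-t^n_k|^{2/p'}$ once $\beta<\tfrac12$ is chosen with $\beta+\tfrac1p>\tfrac2{p'}$. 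Raising to the power $p'/2$ and enlarging $C(\omega)$ gives \eqref{eq:RIE inequality}. The main obstacle is precisely this step: the cross-block estimates must be almost sure and \emph{uniform in $n$ and in the dyadic partition points}, which forces a Kolmogorov/Garsia--Rodemich--Rumsey moment bound together with a Borel--Cantelli argument over the at most $2^{2n}$ dyadic pairs, and this is where the slack $p'>p$ is consumed; the short conceptual input that makes the $W$-against-$\eta$ block close up is the vanishing of $\langle\eta,W\rangle$.

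Finally, once Property \textup{(RIE)} holds, $\boldsymbol\Lambda^1=(\eta,W)$ by definition, and by \eqref{eq:RIE rough path}, $\boldsymbol\Lambda^2_{s,t}=\int_s^t(\eta,W)_u\otimes\d(\eta,W)_u-(\eta,W)_s\otimes(\eta,W)_{s,t}$ with $\int_s^\cdot=\int_0^\cdot-\int_0^s$ given by the uniform Riemann-sum limits identified above. Substituting block by block: the $(1,1)$ block yields $\boldsymbol\eta^2_{s,t}$ by the definition of $\boldsymbol\eta^2$, the $(2,2)$ block yields $\W_{s,t}$, the $(1,2)$ block yields $\int_s^t\eta_u\otimes\d W_u-\eta_s\otimes W_{s,t}=\int_s^t\eta_{s,u}\otimes\d W_u$, and the $(2,1)$ limit simplifies, using $W_t\otimes\eta_t-W_s\otimes\eta_s-W_s\otimes\eta_{s,t}=W_{s,t}\otimes\eta_t$ and $\big(\int_s^t\eta_u\otimes\d W_u\big)^\top-\big(\int_s^t\eta_{s,u}\otimes\d W_u\big)^\top=W_{s,t}\otimes\eta_s$, to $W_{s,t}\otimes\eta_{s,t}-\big(\int_s^t\eta_{s,u}\otimes\d W_u\big)^\top$, which is exactly \eqref{eq: Lambda^2 representation}.
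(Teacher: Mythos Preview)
Your plan is correct and matches the paper's proof in its overall architecture: verify (i)--(iii) of Property \textup{(RIE)} block by block, use BDG plus Borel--Cantelli for the $\int\eta^n\,\d W$ convergence in (ii), handle the $\int W^n\,\d\eta$ direction via discrete integration by parts and Lemma~\ref{lemma: the existence of quadratic covariation}, and read off \eqref{eq: Lambda^2 representation} from the limits. The identification of the four blocks of $\boldsymbol\Lambda^2$ at the end is also exactly as in the paper.

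The one place where you diverge from the paper is in condition~(iii) for the cross blocks. For the $\eta$-against-$W$ block you split $\int_{t^n_k}^{t^n_\ell}\eta^n_{t^n_k,u}\otimes\d W_u$ into the It\^o piece $\int_{t^n_k}^{t^n_\ell}\eta_{t^n_k,u}\otimes\d W_u$ (controlled by Kolmogorov) and the remainder $\int_{t^n_k}^{t^n_\ell}(\eta_{\gamma^n(u)}-\eta_u)\otimes\d W_u$, and then run a high-moment Markov/Borel--Cantelli argument over the $\lesssim 2^{2n}$ dyadic pairs; this works because the relevant exponent $\tfrac{2}{p}+1-\tfrac{4}{p'}>1-\tfrac{2}{p'}>0$. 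The paper instead uses a dichotomy on whether $2^{-n}$ is large or small relative to $|t^n_\ell-t^n_k|$: in the coarse-mesh case a deterministic sewing-type bound (\`a la \cite[Lemma~3.2]{Liu2018}), in the fine-mesh case the uniform rate already obtained in (ii) combined with the Kolmogorov bound on the limit. Both routes are valid; yours is a little more brute-force but avoids the case split. One caution: your bound on the discrete quadratic covariation $\sum_{i=k}^{\ell-1}W_{t^n_i,t^n_{i+1}}\otimes\eta_{t^n_i,t^n_{i+1}}$ by $|t^n_\ell-t^n_k|^{2/p'}$ cannot come from the H\"older condition $\beta+\tfrac1p>\tfrac2{p'}$ alone (a triangle-inequality bound blows up like $2^{n(1-1/p-\beta)}$); you need either another Borel--Cantelli-over-pairs argument (same moment computation as for the remainder term above), or the paper's trick of rewriting $\sum_i|\eta_{t^n_i,t^n_{i+1}}|^2$ and $\sum_i|W_{t^n_i,t^n_{i+1}}|^2$ via the identity $|Z_{t^n_k,t^n_\ell}|^2=2\int_{t^n_k}^{t^n_\ell}Z^n_{t^n_k,u}\cdot\d Z_u+\sum_i|Z_{t^n_i,t^n_{i+1}}|^2$ and invoking Property \textup{(RIE)} for $\eta$ and for $W$.
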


\begin{proof}
  Let $p' \in (p,3)$. It follows from the Kolmogorov criterion for rough paths (see \cite[Theorem~3.1]{Friz2020}) that, for almost every $\omega \in \Omega$,
  \begin{equation}\label{eq: int eta dW Holder bound}
    \bigg|\bigg(\int_s^t \eta_{s,u} \otimes \d W_u\bigg)(\omega)\bigg| \lesssim |t - s|^{\frac{2}{p'}} \qquad \text{for all} \quad (s,t) \in \Delta_T,
  \end{equation}
  and moreover that $\boldsymbol{\Lambda}(\omega) = (\boldsymbol{\Lambda}^1(\omega),\boldsymbol{\Lambda}^2(\omega))$ is a $\frac{1}{p'}$-H\"older continuous rough path. We will show that $(\eta,W(\omega))$ satisfies Property \textup{(RIE)}, and that the associated canonical rough path is indeed given by $\boldsymbol{\Lambda}(\omega)$.

  \emph{Step 1.}
  As usual, we let $\eta^n$ and $W^n$ denote the piecewise constant approximations of $\eta$ and $W$ respectively, along $\cP_D^n$. By assumption, $\eta$ satisfies Property \textup{(RIE)} relative to $p$ and $(\cP_D^n)_{n \in \N}$. By Proposition~\ref{prop: BM satisfies RIE} (or Proposition~\ref{prop: Ito diffusion has RIE}), for almost every $\omega \in \Omega$, the sample path $W(\omega)$ also satisfies Property \textup{(RIE)} relative to $p$ and $(\cP_D^n)_{n \in \N}$.

  It follows from the first condition in Property \textup{(RIE)} for $\eta$ and $W(\omega)$ that, for almost every $\omega \in \Omega$,
  \begin{equation*}
    (\eta^n,W^n(\omega)) \, \longrightarrow \, (\eta,W(\omega)) \qquad \text{uniformly as} \quad n \, \longrightarrow \, \infty,
  \end{equation*}
  so that this condition also holds for the pair $(\eta,W(\omega))$. Moreover, it follows from the second condition in Property \textup{(RIE)} that $\int_0^\cdot \eta^n_u \otimes \d \eta_u$ converges uniformly to $\int_0^\cdot \eta_u \otimes \d \eta_u$, and, for almost every $\omega \in \Omega$, that $(\int_0^\cdot W^n_u \otimes \d W_u)(\omega)$ converges uniformly to $(\int_0^\cdot W_u \otimes \d W_u)(\omega)$.

  By the Burkholder--Davis--Gundy inequality, and the observation that $\|\eta^n - \eta\|_\infty \lesssim 2^{-\frac{n}{p}}$, we have that
  \begin{equation*}
    \E \bigg[\bigg\|\int_0^\cdot \eta^n_u \otimes \d W_u - \int_0^\cdot \eta_u \otimes \d W_u\bigg\|_\infty^2\bigg] \lesssim \E \bigg[\int_0^T |\eta^n_u - \eta_u|^2 \dd u\bigg] \lesssim 2^{-\frac{2n}{p}}.
  \end{equation*}
  For any $\epsilon \in (1 - \frac{2}{p},1)$, it then follows from Markov's inequality that
  \begin{equation*}
    \P \bigg(\bigg\|\int_0^\cdot \eta^n_u \otimes \d W_u - \int_0^\cdot \eta_u \otimes \d W_u\bigg\|_\infty \geq 2^{-\frac{n}{2}(1-\epsilon)}\bigg) \lesssim 2^{n(1 - \frac{2}{p} - \epsilon)}.
  \end{equation*}
  The Borel--Cantelli lemma then implies that, for almost every $\omega \in \Omega$,
  \begin{equation}\label{eq: Borel-Cantelli bound joint lift}
    \bigg\|\bigg(\int_0^\cdot \eta^n_u \otimes \d W_u - \int_0^\cdot \eta_u \otimes \d W_u\bigg)(\omega)\bigg\|_\infty \lesssim 2^{-\frac{n}{2}(1-\epsilon)}
  \end{equation}
  for all $n \in \N$, and in particular that $(\int_0^\cdot \eta^n_u \otimes \d W_u)(\omega)$ converges uniformly to $(\int_0^\cdot \eta_u \otimes \d W_u)(\omega)$ as $n \to \infty$.

  Let us write $\cP_D^n = \{0 = t^n_0 < t^n_1 < \cdots < t^n_{2^n} = T\}$ for $n \in \N$, where $t^n_k = k 2^{-n} T$. It is straightforward to verify that, for any $t \in [0,T]$,
  \begin{equation*}
    W_t \otimes \eta_t = \int_0^t W^n_u \otimes \d \eta_u + \bigg(\int_0^t \eta^n_u \otimes \d W_u\bigg)^{\hspace{-3pt}\top} + \langle W, \eta \rangle^n_t,
  \end{equation*}
  where, by Lemma~\ref{lemma: the existence of quadratic covariation}, the discrete quadratic variation $\langle W, \eta \rangle^n_t := \sum_{k=0}^{2^n - 1} W_{t^n_k \wedge t,t^n_{k+1} \wedge t} \otimes \eta_{t^n_k \wedge t,t^n_{k+1} \wedge t}$ almost surely converges uniformly to $\langle W, \eta \rangle_t = 0$ as $n \to \infty$. We then see that, for almost every $\omega \in \Omega$,
  \begin{equation*}
    \int_0^t W^n_u(\omega) \otimes \d \eta_u \, \longrightarrow \, W_t(\omega) \otimes \eta_t - \bigg(\int_0^t \eta_u \otimes \d W_u\bigg)^{\hspace{-3pt}\top}(\omega)
  \end{equation*}
  as $n \to \infty$, uniformly in $t \in [0,T]$. We have thus established that, for almost every $\omega \in \Omega$, the path $(\eta,W(\omega))$ also satisfies the second condition of Property \textup{(RIE)}, and moreover that the resulting canonical rough path is indeed given by \eqref{eq: Lambda^2 representation}.

  \emph{Step 2.} It remains to show that $(\eta,W(\omega))$ satisfies the third condition of Property \textup{(RIE)} relative to $p'$ and $(\cP_D^n)_{n \in \N}$.

  Since $\eta$ satisfies Property \textup{(RIE)} relative to $p$ and $(\cP_D^n)_{n \in \N}$, there exists a control function $w_\eta$ such that
  \begin{equation}\label{eq: RIE for eta}
    \sup_{(s,t) \in \Delta_T} \frac{|\eta_{s,t}|^p}{w_\eta(s,t)} + \sup_{n \in \N} \, \sup_{0 \leq k < \ell \leq 2^n} \frac{|\int_{t^n_k}^{t^n_\ell} \eta^n_u \otimes \d \eta_u - \eta_{t^n_k} \otimes \eta_{t^n_k,t^n_\ell}|^{\frac{p}{2}}}{w_\eta(t^n_k,t^n_\ell)} \leq 1,
  \end{equation}
  which implies that the same inequality also holds with $p$ replaced by $p'$ (possibly with a different control function, but without loss of generality we may assume that $w_\eta$ remains valid for $p'$). Similarly, since for almost every $\omega \in \Omega$ the sample path $W(\omega)$ satisfies Property \textup{(RIE)} relative to $p$ (and therefore also to $p'$) and $(\cP_D^n)_{n \in \N}$, there exists a control function $c$ such that
  \begin{equation}\label{eq: RIE for W}
    \sup_{(s,t) \in \Delta_T} \frac{|W_{s,t}(\omega)|^{p'}}{c(s,t)} + \sup_{n \in \N} \, \sup_{0 \leq k < \ell \leq 2^n} \frac{|(\int_{t^n_k}^{t^n_\ell} W^n_u \otimes \d W_u - W_{t^n_k} \otimes W_{t^n_k,t^n_\ell})(\omega)|^{\frac{p'}{2}}}{c(t^n_k,t^n_\ell)} \leq 1.
  \end{equation}

  \emph{Step 3.}
  Let $\beta \in (0,\frac{1}{2})$. Since $\eta$ is $\frac{1}{p}$-H\"older continuous, and the sample paths of $W$ are almost surely $\beta$-H\"older continuous, we have that
  \begin{equation*}
    |\eta_{t^n_{i-1}} \otimes W_{t^n_{i-1},t^n_i} + \eta_{t^n_i} \otimes W_{t^n_i,t^n_{i+1}} - \eta_{t^n_{i-1}} \otimes W_{t^n_{i-1},t^n_{i+1}}| = |\eta_{t^n_{i-1},t^n_i} \otimes W_{t^n_i,t^n_{i+1}}| \lesssim |t^n_{i+1} - t^n_{i-1}|^{\frac{1}{p} + \beta}
  \end{equation*}
  for any $i = 1, \ldots, N_n-1$, where the implicit multiplicative constant is a random variable, and we can follow the proof of \cite[Lemma~3.2]{Liu2018} to deduce that, for almost any fixed $\omega \in \Omega$, for any $k < \ell$, and writing $N = \ell - k = 2^n |t^n_\ell - t^n_k| T^{-1}$,
  \begin{equation*}
    \bigg|\bigg(\int_{t^n_k}^{t^n_\ell} \eta^n_u \otimes \d W_u\bigg)(\omega) - \eta_{t^n_k} \otimes W_{t^n_k,t^n_\ell}(\omega)\bigg| \lesssim N^{1 - \frac{2}{\rho}} |t^n_\ell - t^n_k|^{\frac{2}{\rho}} \lesssim 2^{n(1 - \frac{2}{\rho})} |t^n_\ell - t^n_k|,
  \end{equation*}
  where $\frac{2}{\rho} = \frac{1}{p} + \beta$.

  Let $\epsilon \in (1 - \frac{2}{p},1)$. If $2^{-n} \geq |t^n_\ell - t^n_k|^{\frac{4}{p(1-\epsilon)}}$, then
  \begin{align*}
    \bigg|\bigg(\int_{t^n_k}^{t^n_\ell} \eta^n_u \otimes \d W_u\bigg)(\omega) - \eta_{t^n_k} \otimes W_{t^n_k,t^n_\ell}(\omega)\bigg| \lesssim |t^n_\ell - t^n_k|^{1 - \frac{4}{p(1-\epsilon)} (1 - \frac{2}{\rho})}.
  \end{align*}
  By choosing $\epsilon$ close to $1 - \frac{2}{p}$, we can make the above exponent $1 - \frac{4}{p(1-\epsilon)} (1 - \frac{2}{\rho})$ arbitrarily close to $\frac{4}{\rho} - 1 = \frac{2}{p} + 2\beta - 1$. By then choosing $\beta$ close to $\frac{1}{2}$, we can make this value arbitrarily close to $\frac{2}{p}$ from below. In particular, by making suitable choices of $\epsilon$ and $\beta$, we can ensure that $1 - \frac{4}{p(1-\epsilon)} (1 - \frac{2}{\rho}) = \frac{2}{p'}$, and we obtain
  \begin{equation}\label{eq: first estimate for joint lift}
    \bigg|\bigg(\int_{t^n_k}^{t^n_\ell} \eta^n_u \otimes \d W_u\bigg)(\omega) - \eta_{t^n_k} \otimes W_{t^n_k,t^n_\ell}(\omega)\bigg| \lesssim |t^n_\ell - t^n_k|^{\frac{2}{p'}}.
  \end{equation}

  We will now aim to obtain the same estimate in the case that $2^{-n} < |t^n_\ell - t^n_k|^{\frac{4}{p(1-\epsilon)}}$, with $\epsilon$ chosen as above. Recalling \eqref{eq: int eta dW Holder bound} and \eqref{eq: Borel-Cantelli bound joint lift}, we have that
  \begin{align*}
    \bigg|&\bigg(\int_{t^n_k}^{t^n_\ell} \eta^n_u \otimes \d W_u\bigg)(\omega) - \eta_{t^n_k} \otimes W_{t^n_k,t^n_\ell}(\omega)\bigg|\\
    &= \bigg|\bigg(\int_{t^n_k}^{t^n_\ell} \eta^n_u \otimes \d W_u\bigg)(\omega) - \bigg(\int_{t^n_k}^{t^n_\ell} \eta_u \otimes \d W_u\bigg)(\omega) + \bigg(\int_{t^n_k}^{t^n_\ell} \eta_u \otimes \d W_u\bigg)(\omega) - \eta_{t^n_k} \otimes W_{t^n_k,t^n_\ell}(\omega)\bigg|\\
    &\leq 2 \bigg\|\bigg(\int_0^\cdot \eta^n_u \otimes \d W - \int_0^\cdot \eta_u \otimes \d W\bigg)(\omega)\bigg\|_\infty + \bigg|\bigg(\int_{t^n_k}^{t^n_\ell} \eta_{t^n_k,u} \otimes \d W_u\bigg)(\omega)\bigg|\\
    &\lesssim 2^{-\frac{n}{2}(1 - \epsilon)} + |t^n_\ell - t^n_k|^{\frac{2}{p'}}\\
    &\lesssim |t^n_\ell - t^n_k|^{\frac{2}{p'}}.
  \end{align*}
  Combining this with \eqref{eq: first estimate for joint lift}, we conclude that
  \begin{equation}\label{eq: int eta dW RIE bound}
    \sup_{n \in \N} \, \sup_{0 \leq k < \ell \leq 2^n} \frac{|(\int_{t^n_k}^{t^n_\ell} \eta^n_u \otimes \d W_u)(\omega) - \eta_{t^n_k} \otimes W_{t^n_k,t^n_\ell}(\omega)|^{\frac{p'}{2}}}{C(\omega)|t^n_\ell - t^n_k|} \leq 1,
  \end{equation}
  for a suitable random variable $C$.

  \emph{Step 4.}
  For any $n \in \N$ and $0 \leq k < \ell \leq 2^n$, it is straightforward to verify that
  \begin{equation*}
    |\eta_{t^n_k,t^n_\ell}|^2 = 2 \int_{t^n_k}^{t^n_\ell} \eta^n_{t^n_k,u} \cdot \d \eta_u + \sum_{i=k}^{\ell-1} |\eta_{t^n_i,t^n_{i+1}}|^2, 
  \end{equation*}
  where $\cdot$ denotes the Euclidean inner product. It follows from \eqref{eq: RIE for eta} that $|\eta_{t^n_k,t^n_\ell}|^2 \lesssim w_\eta(t^n_k,t^n_\ell)^{\frac{2}{p'}}$, and that
  \begin{equation*}
    \sup_{n \in \N} \, \sup_{0 \leq k < \ell \leq 2^n} \frac{|\int_{t^n_k}^{t^n_\ell} \eta^n_{t^n_k,u} \cdot \d \eta_u|^{\frac{p'}{2}}}{w_\eta(t^n_k,t^n_\ell)} \lesssim 1,
  \end{equation*}
  from which we then have that
  \begin{equation*}
    \sup_{n \in \N} \, \sup_{0 \leq k < \ell \leq 2^n} \frac{|\sum_{i=k}^{\ell-1} |\eta_{t^n_i,t^n_{i+1}}|^2|^{\frac{p'}{2}}}{w_\eta(t^n_k,t^n_\ell)} \lesssim 1.
  \end{equation*}
  The same argument holds for the sample paths of $W$, and since
  \begin{equation*}
    \bigg|\sum_{i=k}^{\ell-1} W_{t^n_i,t^n_{i+1}} \otimes \eta_{t^n_i,t^n_{i+1}}\bigg| \lesssim \sum_{i=k}^{\ell-1} |W_{t^n_i,t^n_{i+1}}|^2 + \sum_{i=k}^{\ell-1} |\eta_{t^n_i,t^n_{i+1}}|^2,
  \end{equation*}
  we deduce that
  \begin{equation}\label{eq: sum W otimes eta RIE bound}
    \sup_{n \in \N} \, \sup_{0 \leq k < \ell \leq 2^n} \frac{|\sum_{i=k}^{\ell-1} W_{t^n_i,t^n_{i+1}} \otimes \eta_{t^n_i,t^n_{i+1}}|^{\frac{p'}{2}}}{w_\eta(t^n_k,t^n_\ell) + c(t^n_k,t^n_\ell)} \lesssim 1.
  \end{equation}

  By the H\"older continuity of $\eta$ and $W$, it is clear that $|W_{t^n_k,t^n_\ell} \otimes \eta_{t^n_k,t^n_\ell}| \lesssim |t^n_\ell - t^n_k|^{\frac{2}{p'}}$, so that
  \begin{equation}\label{eq: W otimes eta RIE bound}
    \sup_{n \in \N} \, \sup_{0 \leq k < \ell \leq 2^n} \frac{|W_{t^n_k,t^n_\ell} \otimes \eta_{t^n_k,t^n_\ell}|^{\frac{p'}{2}}}{|t^n_\ell - t^n_k|} \lesssim 1.
  \end{equation}

  For any $n \in \N$ and $0 \leq k < \ell \leq 2^n$, it is straightforward to verify that
  \begin{equation*}
    W_{t^n_k,t^n_\ell} \otimes \eta_{t^n_k,t^n_\ell} = \int_{t^n_k}^{t^n_\ell} W^n_{t^n_k,u} \otimes \d \eta_u + \bigg(\int_{t^n_k}^{t^n_\ell} \eta^n_{t^n_k,u} \otimes \d W_u\bigg)^{\hspace{-3pt}\top} + \sum_{i=k}^{\ell-1} W_{t^n_i,t^n_{i+1}} \otimes \eta_{t^n_i,t^n_{i+1}}.
  \end{equation*}
  Recalling \eqref{eq: int eta dW RIE bound}, \eqref{eq: sum W otimes eta RIE bound} and \eqref{eq: W otimes eta RIE bound}, we thus have that
  \begin{equation*}
    \sup_{n \in \N} \, \sup_{0 \leq k < \ell \leq 2^n} \frac{|\int_{t^n_k}^{t^n_\ell} W^n_{t^n_k,u} \otimes \d \eta_u|^{\frac{p'}{2}}}{\hat{w}(t^n_k,t^n_\ell)} \leq 1
  \end{equation*}
  for a suitable random control function $\hat{w}$. Combining this with \eqref{eq: RIE for eta}, \eqref{eq: RIE for W} and \eqref{eq: int eta dW RIE bound}, we conclude that, for almost every $\omega \in \Omega$, the path $(\eta,W(\omega))$ indeed satisfies the third condition of Property \textup{(RIE)}.
\end{proof}

\begin{remark}
  A joint rough path lift of $(\eta,W)$ is constructed in \cite[Section~2]{Diehl2015} which allows \eqref{eq: rough SDE} to be treated as a rough Stratonovich SDE. Since the construction of the joint lift $\boldsymbol{\Lambda}$ above is based on a piecewise constant approximation, as in Property \textup{(RIE)}, rather than on linear interpolations as considered in \cite{Diehl2015}, Theorem~\ref{theorem: joint lift RIE} provides a joint It\^o-type rough path lift of $(\eta,W)$ and, thus, an It\^o interpretation of the rough SDE \eqref{eq: rough SDE}, consistent with that in \cite{Friz2021}.
\end{remark}

\appendix

\section{Proof of Theorem~\ref{thm: RDE}}\label{appendix proof of RDE thm}

\begin{proof}[Proof of Theorem~\ref{thm: RDE}]
  \emph{Step 1.} Let $L > 0$ such that $\|A\|_r, \|H\|_r, \|\bX\|_p \leq L$, and let $w \colon \Delta_T \to [0,\infty)$ be the right-continuous control function given by
  \begin{equation*}
    w(s,t) = \|A\|_{r,[s,t]}^{r} + \|H\|_{r,[s,t]}^{r} + \|X\|_{p,[s,t]}^p + \|\X\|_{\frac{p}{2},[s,t]}^{\frac{p}{2}}, \qquad \text{for} \quad (s,t) \in \Delta_T.
  \end{equation*}
  For $t \in (0,T]$, we define the map $\cM_t \colon \cV^{q,r}_X([0,t];\R^k) \to \cV^{q,r}_X([0,t];\R^k)$ by
  \begin{equation*}
    \cM_t(Y,Y') = \bigg(y_0 + \int_0^\cdot b(H_s,Y_s) \dd A_s + \int_0^\cdot \sigma(H_s,Y_s) \dd \bX_s,\sigma(H,Y)\bigg),
  \end{equation*}
  and, for $\delta \geq 1$, introduce the subset of controlled paths
  \begin{equation*}
    \cB^{(\delta)}_t = \Big\{(Y,Y') \in \cV^{q,r}_X([0,t];\R^k) : (Y_0,Y'_0) = (y_0,\sigma(H_0,y_0)), \, \|Y,Y'\|^{(\delta)}_{X,q,r} \leq 1\Big\},
  \end{equation*}
  where
  \begin{equation*}
    \|Y,Y'\|^{(\delta)}_{X,q,r} := \|Y'\|_{q,[0,t]} + \delta \|R^Y\|_{r,[0,t]}.
  \end{equation*}

  Applying standard estimates for Young and rough integrals (e.g., \cite[Proposition~2.4 and Lemma~3.6]{Friz2018}), for any $(Y,Y') \in \cB^{(\delta)}_t$, we deduce that
  \begin{equation*}
    \|\cM_t(Y,Y')\|^{(\delta)}_{X,q,r} \leq C_1 \bigg(\frac{1}{\delta} + \delta (\|A\|_{r,[0,t]} + \|H\|_{r,[0,t]} + \|\bX\|_{p,[0,t]})\bigg),
  \end{equation*}
  for a constant $C_1 \geq \frac{1}{2}$ which depends only on $p, q, r, \|b\|_{C_b^2}, \|\sigma\|_{C_b^3}$, and $L$. Let $\delta = \delta_1 := 2 C_1$, so that
  \begin{equation*}
    \|\cM_t(Y,Y')\|^{(\delta_1)}_{X,q,r} \leq \frac{1}{2} + 2 C_1^2 (2w(0,t)^{\frac{1}{r}} + w(0,t)^{\frac{1}{p}} + w(0,t)^{\frac{2}{p}}).
  \end{equation*}
  By the right-continuity of $w$, we can then take $t = t_1$ sufficiently small such that
  \begin{equation*}
    \|\cM_{t_1}(Y,Y')\|^{(\delta_1)}_{X,q,r} \leq 1,
  \end{equation*}
  and we have that $\cB^{(\delta_1)}_{t_1}$ is invariant under $\cM_{t_1}$.

  \emph{Step 2.} Let $(Y,Y'), (\tY,\tY') \in \cB^{(\delta)}_t$, for some (new) $\delta \geq 1$ and $t \in (0,t_1]$. Applying standard estimates for Young and rough integrals (e.g., \cite[Proposition~2.4, Lemma~3.1 and Lemma~3.7]{Friz2018}), we deduce that
  \begin{align*}
    &\|\cM_t(Y,Y') - \cM_t(\tY,\tY')\|^{(\delta)}_{X,q,r}\\
    &\quad\leq C_2 \Big(\|R^Y - R^{\tY}\|_{r,[0,t]} + \delta (\|Y' - \tY'\|_{q,[0,t]} + \|R^Y - R^{\tY}\|_{r,[0,t]}) (\|A\|_{r,[0,t]} + \|\bX\|_{p,[0,t]})\Big),
  \end{align*}
  where $C_2 > \frac{1}{2}$ depends only on $p, q, r, \|b\|_{C_b^2}, \|\sigma\|_{C_b^3}$ and $L$. Let $\delta = \delta_2 := 2 C_2 > 1$, so that
  \begin{align*}
	&\|\cM_t(Y,Y') - \cM_t(\tY,\tY')\|^{(\delta_2)}_{X,q,r} \\
    &\quad\leq \frac{\delta_2}{2} \|R^Y - R^{\tY}\|_{r,[0,t]}\\
	&\quad \quad+ 2 C_2^2 (\|Y' - \tY'\|_{q,[0,t]} + \|R^Y - R^{\tY}\|_{r,[0,t]}) (w(0,t)^{\frac{1}{r}} + w(0,t)^{\frac{1}{p}} + w(0,t)^{\frac{2}{p}}).
  \end{align*}
  Again by the right-continuity of $w$, we then take $t = t_2 \leq t_1$ sufficiently small such that
  \begin{align*}
	\|\cM_{t_2}(Y,Y') - \cM_{t_2}(\tY,\tY')\|^{(\delta_2)}_{X,q,r} 
	&\leq \frac{1}{2} \|Y' - \tY'\|_{q,[0,t_2]} + \frac{\delta_2 + 1}{2} \|R^Y - R^{\tY}\|_{r,[0,t_2]}\\
	&\leq \frac{\delta_2 + 1}{2 \delta_2} \|(Y,Y') - (\tY,\tY')\|^{(\delta_2)}_{X,q,r},
  \end{align*}
  from which it follows that $\cM_{t_2}$ is a contraction on the Banach space $(\cB^{(\delta_1)}_{t_2}, \|\cdot\|^{(\delta_2)}_{X,q,r})$. The fixed point of this map is the unique solution of the RDE~\eqref{eq:general RDE} over the time interval $[0,t_2]$.

  \emph{Step 3.} Now let $\tA \in D^{q_1}$, $\tH \in D^{q_2}$, $\tbX = (\tX,\tbbX) \in \cD^p$ and $\ty_0 \in \R^n$, such that $\|\tA\|_r, \|\tH\|_r, \|\tbX\|_p \leq L$. By considering instead the control function $w$ given by
  \begin{align*}
	  w(s,t) &= \|A\|_{r,[s,t]}^r + \|H\|_{r,[s,t]}^r + \|X\|_{p,[s,t]}^p + \|\X\|_{\frac{p}{2},[s,t]}^{\frac{p}{2}}\\
	&\quad + \|\tA\|_{r,[s,t]}^r + \|\tH\|_{r,[s,t]}^r + \|\tX\|_{p,[s,t]}^p + \|\tbbX\|_{\frac{p}{2},[s,t]}^{\frac{p}{2}}, \qquad \text{for} \quad (s,t) \in \Delta_T,
  \end{align*}
  it follows from the above that there exist unique solutions $(Y,Y') \in \cV^{q,r}_X([0,t_2];\R^k)$ and $(\tY,\tY') \in \cV^{q,r}_{\tX}([0,t_2];\R^k)$ of the RDE~\eqref{eq:general RDE}, with data $(A,H,\bX,y_0)$ and $(\tA,\tH,\tbX,\ty_0)$ respectively, over a sufficiently small time interval $[0,t_2]$. Standard estimates for Young and rough integrals (e.g., \cite[Proposition~2.4, Lemma~3.1 and Lemma~3.7]{Friz2018}) imply, after some calculation, that for any $\delta \geq 1$ and $t \in (0,t_2]$,
  \begin{align*}
	&\|Y' - \tY'\|_{q,[0,t]} + \delta \|R^Y - R^{\tY}\|_{r,[0,t]}\\
	&\quad\leq C_3 \Big(|y_0 - \ty_0| + |H_0 - \tH_0| + \|H - \tH\|_{r,[0,t]} + \|R^Y - R^{\tY}\|_{r,[0,t]}\\
	&\qquad + \delta (\|A - \tA\|_{r,[0,t]} + \|\bX;\tbX\|_{p,[0,t]})\\
	&\qquad + \delta (|y_0 - \ty_0| + |H_0 - \tH_0| + \|H - \tH\|_{r,[0,t]} + \|Y' - \tY'\|_{q,[0,t]} + \|R^Y - R^{\tY}\|_{r,[0,t]})\\
	&\qquad \quad \times (\|A\|_{r,[0,t]} + \|\bX\|_{p,[0,t]})\Big),
  \end{align*}
  where $C_3 > 0$ depends only on $p, q, r, \|b\|_{C_b^2}, \|\sigma\|_{C_b^3}$ and $L$. Let $\delta = \delta_3 := C_3 + 1$, so that
  \begin{align*}
	&\|Y' - \tY'\|_{q,[0,t]} + \|R^Y - R^{\tY}\|_{r,[0,t]}\\
	&\quad\leq C_3 \Big(|y_0 - \ty_0| + |H_0 - \tH_0| + \|H - \tH\|_{r,[0,t]} + \delta_3 (\|A - \tA\|_{r,[0,t]} + \|\bX;\tbX\|_{p,[0,t]})\\
	&\qquad + \delta_3 (|y_0 - \ty_0| + |H_0 - \tH_0| + \|H - \tH\|_{r,[0,t]} + \|Y' - \tY'\|_{q,[0,t]} + \|R^Y - R^{\tY}\|_{r,[0,t]})\\
	&\qquad \quad \times (w(0,t)^{\frac{1}{r}} + w(0,t)^{\frac{1}{p}} + w(0,t)^{\frac{2}{p}})\Big).
  \end{align*}
  By taking $t = t_3 \leq t_2$ sufficiently small, we deduce that
  \begin{equation}\label{eq:local estimate for RDE}
    \begin{split}
	&\|Y - \tY\|_{p,[0,t_3]} + \|Y' - \tY'\|_{q,[0,t_3]} + \|R^Y - R^{\tY}\|_{r,[0,t_3]}\\
	&\quad\leq C_4 \Big(|y_0 - \ty_0| + |H_0 - \tH_0| + \|H - \tH\|_{r,[0,t_3]} + \|A - \tA\|_{r,[0,t_3]} + \|\bX;\tbX\|_{p,[0,t_3]}\Big),
    \end{split}
  \end{equation} 
  for a new constant $C_4$, still depending only on $p, q, r, \|b\|_{C_b^2}, \|\sigma\|_{C_b^3}$ and $L$.

  \emph{Step 4.} We infer from the above that there exists a constant $\epsilon > 0$, which depends only on $p, q, r, \|b\|_{C_b^2}, \|\sigma\|_{C_b^3}$ and $L$, such that, given initial values $Y_s, \tY_s \in \R^k$, the local solutions $(Y,Y')$ and $(\tY,\tY')$ established above exist on any interval $[s,t]$ such that $w(s,t) \leq \epsilon$. Moreover, these local solutions satisfy an estimate on this interval of the form in \eqref{eq:local estimate for RDE}.

  By \cite[Lemma~1.5]{Friz2018}, there exists a partition $\cP = \{0 = t_0 < t_1 < \cdots < t_N = T\}$, such that $w(t_i,t_{i+1}-) < \epsilon$ for every $i = 0, 1, \ldots, N-1$. We can then define the solutions $(Y,Y')$ and $(\tY,\tY')$ on each of the half-open intervals $[t_i,t_{i+1})$. Given the solutions on $[t_i,t_{i+1})$, the values $Y_{t_{i+1}}$ and $\tY_{t_{i+1}}$ at the right end-point of the interval are uniquely determined by the jumps of $A, \tA, \bX$ and $\tbX$ at time $t_{i+1}$. We thus deduce the existence of unique solutions $(Y,Y')$ and $(\tY,\tY')$ of the RDE on the entire interval $[0,T]$.

  Since $w$ is superadditive, we have that
  \begin{equation*}
    w(t_0,t_1-) + w(t_1-,t_1) + w(t_1,t_2-) + \cdots + w(t_{N-1},t_{N}-) + w(t_{N}-,t_N) \leq w(0,T).
  \end{equation*}
  It is then straightforward to see that the partition $\cP$ may be chosen such that the number of partition points in $\cP$ may be bounded by a constant depending only on $\epsilon$ and $w(0,T)$. Thus, we may combine the local estimates in \eqref{eq:local estimate for RDE} on each of the subintervals, together with simple estimates on the jumps at the end-points of these subintervals, to obtain the global estimate in \eqref{eq:estimate for RDE}.
\end{proof}

\section{The convergence of piecewise constant approximations}

In the following, we adopt the notation
\begin{equation*}
  \liminf_{n \to \infty} \cP^n := \bigcup_{m \in \N} \, \bigcap_{n \geq m} \cP^n
\end{equation*}
for the times $t \in [0,T]$ which, as $n \to \infty$, eventually belong to all subsequent partitions in the sequence $(\cP^n)_{n \in \N}$. The following proposition generalizes the result of \cite[Proposition~2.14]{Allan2023b} so that the sequence of partitions is no longer assumed to be nested.

\begin{proposition}\label{prop: uniform convergence}
  Let $\mathcal{P}^n = \{0 = t^n_0 < t^n_1 < \cdots < t^n_{N_n} = T\}$, $n \in \N$, be a sequence of partitions with vanishing mesh size, so that $|\mathcal{P}^n| \to 0$ as $n \to \infty$. Let $F \colon [0,T] \to \R^d$ be a c{\`a}dl{\`a}g path, and let
  \begin{equation*}
    F^n_t = F_T \1_{\{T\}}(t) + \sum_{k=0}^{N_n - 1} F_{t^n_k} \1_{[t^n_k,t^n_{k+1})}(t), \qquad t \in [0,T],
  \end{equation*}
  be the piecewise constant approximation of $F$ along $\cP^n$. Let
  \begin{equation*}
    J_F := \{t \in (0,T] : F_{t-} \neq F_t\}
  \end{equation*}
  be the set of jump times of $F$. The following are equivalent:
  \begin{enumerate}
    \item[(i)] $J_F \subseteq \liminf_{n \to \infty} \mathcal{P}^n$,
    \item[(ii)] the sequence $(F^n)_{n \in \N}$ converges pointwise to $F$,
    \item[(iii)] the sequence $(F^n)_{n \in \N}$ converges uniformly to $F$.
  \end{enumerate}
\end{proposition}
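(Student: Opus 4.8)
The plan is to prove the three conditions equivalent by establishing the cyclic chain of implications (iii) $\Rightarrow$ (ii) $\Rightarrow$ (i) $\Rightarrow$ (iii). The implication (iii) $\Rightarrow$ (ii) is immediate, and (ii) $\Rightarrow$ (i) is a short contradiction argument; the substance of the proof lies in (i) $\Rightarrow$ (iii).

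For (ii) $\Rightarrow$ (i), I would argue by contradiction: suppose $t \in J_F$ but $t \notin \liminf_{n\to\infty}\mathcal{P}^n$. Then $t < T$, since $T \in \mathcal{P}^n$ for every $n$, and there are infinitely many $n$ with $t \notin \mathcal{P}^n$. For each such $n$, letting $t^n_k$ denote the largest partition point with $t^n_k \leq t$, we have $t^n_k < t$ and $0 < t - t^n_k \leq |\mathcal{P}^n| \to 0$, so that $F^n_t = F_{t^n_k} \to F_{t-}$ along this subsequence. As $F_{t-} \neq F_t$, this contradicts the convergence $F^n_t \to F_t$, proving (i).

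For (i) $\Rightarrow$ (iii), I would use the standard fact that a c{\`a}dl{\`a}g path is regulated: given $\varepsilon > 0$, there is a partition $0 = s_0 < s_1 < \cdots < s_M = T$ such that $|F_u - F_v| < \varepsilon$ whenever $u$ and $v$ lie in a common subinterval $[s_l, s_{l+1})$. The interior points among $s_1, \dots, s_{M-1}$ that belong to $J_F$ are finitely many and, by (i), lie in $\liminf_{n\to\infty}\mathcal{P}^n$, so there is $N_1$ with $\{s_1,\dots,s_{M-1}\}\cap J_F \subseteq \mathcal{P}^n$ for all $n \geq N_1$; and since $|\mathcal{P}^n|\to 0$, there is $N_2$ such that for $n \geq N_2$ every open partition interval $(t^n_k, t^n_{k+1})$ contains at most one $s_l$. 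Now fix $n \geq \max(N_1,N_2)$ and $t \in [0,T)$, and let $k$ be such that $t \in [t^n_k, t^n_{k+1})$, so $F^n_t = F_{t^n_k}$. If $(t^n_k, t]$ contains no $s_l$, then $t^n_k$ and $t$ lie in a common interval $[s_l, s_{l+1})$, giving $|F^n_t - F_t| < \varepsilon$. Otherwise $(t^n_k, t]$ contains exactly one $s_l =: s$, necessarily interior; since $s \leq t < t^n_{k+1}$ we have $s \in (t^n_k, t^n_{k+1})$, so $s \notin \mathcal{P}^n$ and therefore $s \notin J_F$, i.e.\ $F$ is continuous at $s$. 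As $s$ is the only partition point of $\{s_j\}$ in $(t^n_k, t^n_{k+1})$, we have $t^n_k \in [s_{l-1}, s)$ and $t \in [s, s_{l+1})$, so
\begin{equation*}
  |F^n_t - F_t| \leq |F_{t^n_k} - F_{s-}| + |F_{s-} - F_s| + |F_s - F_t| \leq \varepsilon + 0 + \varepsilon = 2\varepsilon.
\end{equation*}
Together with $|F^n_T - F_T| = 0$, this yields $\|F^n - F\|_\infty \leq 2\varepsilon$ for all $n \geq \max(N_1, N_2)$, and letting $\varepsilon \downarrow 0$ gives (iii).

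The main obstacle I anticipate is the case analysis in (i) $\Rightarrow$ (iii): one must keep track of which partition interval a given $t$ falls into and handle those that straddle a point $s_l$, the crucial observation being that such a straddled $s_l$ lies strictly between two consecutive partition points, hence is not itself a partition point, hence — by (i) — is not a jump time of $F$, which is precisely what makes the middle term $|F_{s-}-F_s|$ vanish. Beyond this bookkeeping, the only external ingredient is the regulated-function characterization of c{\`a}dl{\`a}g paths, which is classical.
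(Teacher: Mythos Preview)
Your proof is correct. The argument for (ii) $\Rightarrow$ (i) matches the paper's, and your direct proof of (i) $\Rightarrow$ (iii) via the regulated-function decomposition goes through as written; the one cosmetic point is that $|F_{t^n_k}-F_{s-}|$ is only $\leq \varepsilon$ rather than $<\varepsilon$ (it is obtained as a limit), but this does not affect the conclusion.

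The route differs from the paper's. The paper first shows (i) $\Leftrightarrow$ (ii), and then deduces (ii) $\Rightarrow$ (iii) by invoking an external result of Frankov\'a \cite[Theorem~3.3]{Frankova2019}: it verifies that the family $\{F^n : n \in \N\}$ is \emph{equiregulated}, which together with pointwise convergence forces uniform convergence. This requires checking, for every $t$, the existence of suitable one-sided neighbourhoods on which the oscillation of all $F^n$ is uniformly small, and the paper does this in three separate steps (left limits, right limits at jump times, right limits at continuity points). Your approach is more elementary and self-contained: by fixing at the outset a finite $\varepsilon$-partition $\{s_l\}$ adapted to $F$, you reduce the entire uniform estimate to a two-case analysis on each mesh interval, with the key observation that any $s_l$ trapped strictly inside a mesh interval cannot be a jump time. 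The trade-off is that the paper's argument isolates a reusable structural property (equiregulatedness) at the cost of an external reference, whereas yours gives a direct $\varepsilon$--$N$ proof with no outside input.
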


\begin{proof}
  We first show that conditions (i) and (ii) are equivalent. To this end, suppose that $J_F \subseteq \liminf_{n \to \infty} \mathcal{P}^n$ and let $t \in (0,T]$. If $t \in J_F$, then there exists $m \geq 1$ such that $t \in \mathcal{P}^n$ for all $n \geq m$. In this case we then have that $F^n_t = F_t$ for all $n \geq m$. If $t \notin J_F$, then $F$ is continuous at time $t$, and, since the mesh size $|\mathcal{P}^n| \to 0$, it follows that $F^n_t \to F_t$ as $n \to \infty$.

  Now suppose instead that there exists a $t \in J_F$ such that $t \notin \liminf_{n \to \infty} \mathcal{P}^n$. Then there exists a subsequence $(n_j)_{j \in \N}$ such that $F^{n_j}_t \to F_{t-}$ as $j \to \infty$. Since $F_{t- }\neq F_t$, it follows that $F^n_t \nrightarrow F_t$. This establishes the equivalence of (i) and (ii).

Since (iii) clearly implies (ii), it only remains to show that (ii) implies (iii). By \cite[Theorem~3.3]{Frankova2019}, it is enough to show that the family of paths $\{F^n : n \in \N\}$ is equiregulated in the sense of \cite[Definition~3.1]{Frankova2019}. That is, we need to show that, for every $t \in (0,T]$ and $\epsilon > 0$, there exists a $u \in [0,t)$ such that $|F^n_s - F^n_{t-}| < \epsilon$ for every $s \in (u,t)$ and every $n \in \N$, and moreover that for every $t \in [0,T)$ and $\epsilon > 0$, there exists a $u \in (t,T]$ such that $|F^n_s - F^n_t| < \epsilon$ for every $s \in (t,u)$ and every $n \in \N$.

  \textit{Step~1.} Let $t \in (0,T]$ and $\epsilon > 0$. Since the left limit $F_{t-}$ exists, there exists $\delta > 0$ with $t - \delta > 0$, such that
  \begin{equation*}
    |F_s - F_{t-}| < \frac{\epsilon}{2} \qquad \text{for all} \quad s \in (t - \delta,t).
  \end{equation*}

  Since $|\mathcal{P}^n| \to 0$ as $n \to \infty$, there exists an $m \in \N$ such that, for every $n \geq m$, there exists a partition point $t^n_k \in \cP^n$ such that $t - \delta < t^n_k < t - \frac{\delta}{2}$.

  Let
  \begin{equation*}
    u := \max \bigg(\Big(t - \frac{\delta}{2},t\Big) \cap \bigcup_{n < m} \cP^n\bigg),
  \end{equation*}
  where here we define $\max (\emptyset) := t - \frac{\delta}{2}$.

  Take any $s \in (u,t)$ and any $n \in \N$. Let $i = \max \{k : t^n_k \leq s\}$ and $j = \max \{k : t^n_k < t\}$, so that $F^n_s = F_{t^n_i}$ and $F^n_{t-} = F_{t^n_j}$.

  If $n \geq m$, then there exists a point $t^n_k \in \cP^n$ such that $t - \delta < t^n_k < t - \frac{\delta}{2} \leq u < s$, and it follows that $t^n_i, t^n_j \in (t - \delta,t)$. If instead $n < m$, and if there exists a partition point $t^n_k \in (t - \frac{\delta}{2},t)$, then $t - \frac{\delta}{2} < t^n_k \leq u < s$, and it again follows that $t^n_i, t^n_j \in (t - \delta,t)$. In either case, we then have that
  \begin{equation*}
    |F^n_s - F^n_{t-}| = |F_{t^n_i} - F_{t^n_j}| \leq |F_{t^n_i} - F_{t-}| + |F_{t^n_j} - F_{t-}| < \frac{\epsilon}{2} + \frac{\epsilon}{2} = \epsilon.
  \end{equation*}
  The remaining case is when $n < m$ but $(t - \frac{\delta}{2},t) \cap \cP^n = \emptyset$. In this case the path $F^n$ is constant on the interval $[t - \frac{\delta}{2},t)$ and, since $s \in (t - \frac{\delta}{2},t)$, we have that $F^n_s = F^n_{t-}$.

  In each case, we have that $|F^n_s - F^n_{t-}| < \epsilon$ for all $s \in (u,t)$ and all $n \in \N$.

  \textit{Step~2.}
  Let $t \in (J_F \cup \{0\}) \setminus \{T\}$ and $\epsilon > 0$. Since $F$ is right-continuous, there exists a $\delta > 0$ with $t + \delta < T$, such that 
  \begin{equation*}
    |F_s - F_t| < \epsilon \qquad \text{for all} \quad s \in [t,t + \delta).
  \end{equation*}
  Since condition~(ii) implies condition~(i), we know that $t \in \liminf_{n \to \infty} \mathcal{P}^n$, so that there exists an $m \in \N$ such that $t \in \cap_{n \geq m} \cP^n$. Let
  \begin{equation*}
    u := \min \bigg((t,t + \delta) \cap \bigcup_{n < m} \cP^n\bigg),
  \end{equation*}
  where here we define $\min (\emptyset) := t + \delta$.

  Take any $s \in (t,u)$, and any $n \in \N$. Let $i = \max \{k : t^n_k \leq s\}$, so that $F^n_s = F_{t^n_i}$.

  If $n \geq m$, then $t \in \cP^n$, so $F^n_t = F_t$ and, moreover, $t \leq t^n_i \leq s < u \leq t + \delta$, so that in particular $t^n_i \in [t,t + \delta)$, and hence
  \begin{equation*}
    |F^n_s - F^n_t| = |F_{t^n_i} - F_t| < \epsilon.
  \end{equation*}
  If $n < m$, then there does not exist any partition point $t^n_k \in (t,u) \cap \cP^n$. It follows that the path $F^n$ is constant on the interval $[t,u)$, so that in particular $F^n_s = F^n_t$.

  In each case, we have that $|F^n_s - F^n_t| < \epsilon$ for all $s \in (t,u)$ and all $n \in \N$.

  \textit{Step~3.} Let $t \in (0,T) \setminus J_F$ and $\epsilon > 0$. Since $F$ is continuous at time $t$, there exists a $\delta > 0$ with $0 < t - \delta$ and $t + \delta < T$, such that
  \begin{equation*}
    |F_s - F_t| < \frac{\epsilon}{2} \qquad \text{for all} \quad s \in (t - \delta,t + \delta).
  \end{equation*}
  Since $|\mathcal{P}^n| \to 0$ as $n \to \infty$, there exists an $m \in \N$ such that, for every $n \geq m$, there exists a partition point $t^n_k \in \cP^n$ such that $t - \delta < t^n_k < t$. Let
  \begin{equation*}
    u := \min \bigg((t,t + \delta) \cap \bigcup_{n < m} \cP^n\bigg),
  \end{equation*}
  where here we define $\min (\emptyset) := t + \delta$.

  Take any $s \in (t,u)$ and any $n \in \N$. Let $i = \max \{k : t^n_k \leq s\}$ and $j = \max \{k : t^n_k \leq t\}$, so that $F^n_s = F_{t^n_i}$ and $F^n_t = F_{t^n_j}$.

  If $n \geq m$, then there exists a point $t^n_k \in \cP^n$ such that $t^n_k \in (t - \delta,t)$, and it follows that $t^n_i, t^n_j \in (t - \delta,t + \delta)$, so that
  \begin{equation*}
    |F^n_s - F^n_t| = |F_{t^n_i} - F_{t^n_j}| \leq |F_{t^n_i} - F_t| + |F_{t^n_j} - F_t| < \frac{\epsilon}{2} + \frac{\epsilon}{2} = \epsilon.
  \end{equation*}
  If $n < m$, then there does not exist any partition point $t^n_k \in (t,u) \cap \cP^n$. It follows that the path $F^n$ is constant on the interval $[t,u)$, so that in particular $F^n_s = F^n_t$.

  In each case, we have that $|F^n_s - F^n_t| < \epsilon$ for all $s \in (t,u)$ and all $n \in \N$. It follows that the family of paths $\{F^n : n \in \N\}$ is indeed equiregulated.
\end{proof}

\begin{theorem}\label{thm:rough int under RIE}
  Let $p \in (2,3)$, $q \in [p,\infty)$ and $r \in [\frac{p}{2},2)$ such that $\frac{1}{p} + \frac{1}{r} > 1$ and $\frac{1}{p} + \frac{1}{q} = \frac{1}{r}$, and let $\cP^n = \{0 = t^n_0 < t^n_1 < \cdots < t^n_{N_n} = T\}$, $n \in \N$, be a sequence of partitions with vanishing mesh size. Suppose that $X$ satisfies Property \textup{(RIE)} relative to $p$ and $(\cP^n)_{n \in \N}$, and let $\bX$ be the canonical rough path lift of $X$, as constructed in \eqref{eq:RIE rough path}. Let $(F,F') \in \cV_X^{q,r}$ be a controlled path with respect to $X$, and suppose that $J_F \subseteq \liminf_{n \to \infty} \cP^n$, where $J_F$ is the set of jump times of $F$. Then the rough integral of $(F,F')$ against $\bX$ is given by
  \begin{equation*}
    \int_0^t F_u \dd \bX_u = \lim_{n \to \infty} \sum_{k=0}^{N_n-1} F_{t^n_k} X_{t^n_k \wedge t,t^n_{k+1} \wedge t},
  \end{equation*}
  where the convergence is uniform in $t \in [0,T]$.
\end{theorem}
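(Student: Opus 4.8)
The plan is to split the left-point Riemann sums into the ``compensated'' Riemann sums that define the rough integral and a pure area remainder, and to show that the remainder vanishes uniformly thanks to condition (ii) of Property \textup{(RIE)}. Write $\Xi_{s,t} := F_s X_{s,t} + F'_s \X_{s,t}$ for the germ of the rough integral $\int_0^\cdot F_u\dd\bX_u$, so that $F_s X_{s,t} = \Xi_{s,t} - F'_s\X_{s,t}$. First I would note that condition (i) of Property \textup{(RIE)} together with Proposition~\ref{prop: uniform convergence} gives $J_X \subseteq \liminf_{n\to\infty}\cP^n$; combined with the hypothesis $J_F\subseteq\liminf_{n\to\infty}\cP^n$, the relevant jump times (those of $X$ and of $F$, hence also of $R^F$) are asymptotically contained in the partitions, which is exactly the situation in which the c\`adl\`ag sewing construction of the rough integral (cf.\ \cite{Friz2018}) yields
\[
  \int_0^t F_u\dd\bX_u = \lim_{n\to\infty}\sum_{k=0}^{N_n-1}\Xi_{t^n_k\wedge t,\, t^n_{k+1}\wedge t}, \qquad \text{uniformly in } t\in[0,T].
\]
Here one uses in particular that $\X_{t-,t}=0$ for the It\^o-type lift \eqref{eq:RIE rough path}, so that $t\mapsto\int_0^t F\dd\bX$ jumps only at $J_X$. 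Subtracting the sums appearing in the statement, it remains to prove that
\[
  \sup_{t\in[0,T]}\Big|\sum_{k=0}^{N_n-1}F'_{t^n_k}\,\X_{t^n_k\wedge t,\, t^n_{k+1}\wedge t}\Big| \longrightarrow 0 \qquad \text{as } n\to\infty.
\]

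The key to this is an exact telescoping identity. Set $G^n_t := \int_0^t X_u\otimes\dd X_u - \int_0^t X^n_u\otimes\dd X_u$, the difference of the Riemann sums in condition (ii) of Property \textup{(RIE)}, so that $\|G^n\|_\infty\to 0$ by that condition. Straight from the definition \eqref{eq:RIE rough path} of $\X$ and of the Riemann sums $\int_0^\cdot X^n_u\otimes\dd X_u$, one checks that
\[
  \X_{t^n_k\wedge t,\, t^n_{k+1}\wedge t} = G^n_{t^n_{k+1}\wedge t} - G^n_{t^n_k\wedge t}, \qquad k=0,\dots,N_n-1,
\]
and that, between partition points, $G^n_{t^n_k, t^n_\ell} = \X_{t^n_k,t^n_\ell} - \X^n_{t^n_k,t^n_\ell}$, with $\X^n$ the lift \eqref{eq:defn bbX^n} of $X^n$. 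Thus the sum to be estimated is a left-point Riemann sum of $F'$ against $G^n$ along $\cP^n$.

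Next I would replace $G^n$ by the right-continuous step path $\tilde G^n$ which agrees with $G^n$ on $\cP^n$ and is constant on each $[t^n_k,t^n_{k+1})$. The two Riemann sums then differ only by the single term $F'_{t^n_j}(G^n_t - G^n_{t^n_j})$ (for $t\in[t^n_j,t^n_{j+1})$), which is bounded by $2\|F'\|_\infty\|G^n\|_\infty\to 0$ uniformly in $t$. Since the $\tfrac p2$-variation of $\tilde G^n$ equals that of the restriction of $G^n$ to $\cP^n$, the identity $G^n_{t^n_k,t^n_\ell}=\X_{t^n_k,t^n_\ell}-\X^n_{t^n_k,t^n_\ell}$, Lemma~\ref{lemma: bbX^n p/2var estimate}, and lower semi-continuity of the $\tfrac p2$-variation norm give $\sup_n\|\tilde G^n\|_{\frac p2}<\infty$, while $\|\tilde G^n\|_\infty\le\|G^n\|_\infty\to 0$. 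By interpolation (e.g.\ \cite[Proposition~5.5]{Friz2010}), $\|\tilde G^n\|_\beta\to 0$ for every $\beta>\tfrac p2$. Because $\tfrac1q+\tfrac2p=\tfrac1r+\tfrac1p>1$, one may fix such a $\beta$ with $\tfrac1q+\tfrac1\beta>1$, and the Young--Lo\`eve estimate for partial Riemann sums (cf.\ \cite[Proposition~2.4]{Friz2018}) then gives, uniformly in $t$,
\[
  \Big|\sum_{k=0}^{N_n-1}F'_{t^n_k}\big(\tilde G^n_{t^n_{k+1}\wedge t}-\tilde G^n_{t^n_k\wedge t}\big)\Big| \lesssim |F'_0|\,\|\tilde G^n\|_\infty + \|F'\|_q\,\|\tilde G^n\|_\beta \longrightarrow 0.
\]
Combining the three steps proves the claim.

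The main obstacle is the first step: justifying the uniform convergence of the compensated Riemann sums to the rough integral along the specific sequence $(\cP^n)_{n\in\N}$, which is only assumed to have vanishing mesh and to asymptotically contain $J_F$. In the continuous setting this is standard; in the c\`adl\`ag setting it rests on the sewing lemma together with the fact that the jumps of $\int_0^\cdot F\dd\bX$ sit at $J_X$, which are captured by the partitions via condition (i) of Property \textup{(RIE)} and Proposition~\ref{prop: uniform convergence}. The remaining ingredients---the telescoping identity for $\X$ and the uniform $\tfrac p2$-variation bound for $\tilde G^n$---amount to routine bookkeeping with \eqref{eq:RIE rough path}, \eqref{eq:defn bbX^n} and Lemma~\ref{lemma: bbX^n p/2var estimate}.
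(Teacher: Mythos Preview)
Your overall strategy---split the left-point Riemann sums into the compensated sums $\sum_k \Xi_{t^n_k\wedge t,t^n_{k+1}\wedge t}$ defining the rough integral and the area remainder $\sum_k F'_{t^n_k}\X_{t^n_k\wedge t,t^n_{k+1}\wedge t}$, and kill the latter via the telescoping identity $\X_{t^n_k\wedge t,t^n_{k+1}\wedge t}=G^n_{t^n_{k+1}\wedge t}-G^n_{t^n_k\wedge t}$ together with a Young estimate---is correct, and your treatment of the remainder term is clean: the identity $G^n_{t^n_k,t^n_\ell}=\X_{t^n_k,t^n_\ell}-\X^n_{t^n_k,t^n_\ell}$, the uniform $\tfrac{p}{2}$-variation bound from Lemma~\ref{lemma: bbX^n p/2var estimate}, the interpolation, and the Young--Lo\`eve maximal inequality all combine exactly as you say.

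The paper does not give a self-contained proof; it simply states that the argument of \cite[Theorem~2.15]{Allan2023b} goes through verbatim once Proposition~\ref{prop: uniform convergence} is used in place of \cite[Proposition~2.14]{Allan2023b} to obtain the uniform convergence $F^n\to F$ (which is precisely where the hypothesis $J_F\subseteq\liminf_{n\to\infty}\cP^n$ enters). This suggests that the approach in \cite{Allan2023b} works through the piecewise constant approximation $F^n$ of the integrand more explicitly than your argument does. Your route instead leans on the c\`adl\`ag sewing lemma directly for Step~1; both are valid, and your handling of the area term is essentially what any proof must do. One point worth flagging: in your Step~1, the convergence of the compensated sums along $(\cP^n)_{n\in\N}$ requires that the relevant control functions (built from $\|R^F\|_r$, $\|F'\|_q$, $\|X\|_p$, $\|\X\|_{p/2}$) have vanishing mesh oscillation, and for c\`adl\`ag paths this is exactly where the jump-capture conditions $J_X,J_F\subseteq\liminf_{n\to\infty}\cP^n$ do real work---you are right to highlight this as the main obstacle, but a fully rigorous argument here needs either a careful two-control sewing estimate or the $F^n\to F$ route that the paper invokes.
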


The previous theorem generalizes the result of \cite[Theorem~2.15]{Allan2023b} so that the sequence of partitions is no longer assumed to be nested. The proof of Theorem~\ref{thm:rough int under RIE} follows the proof of \cite[Theorem~2.15]{Allan2023b} almost verbatim. The only difference is that, rather than using \cite[Proposition~2.14]{Allan2023b} to establish the uniform convergence of $F^n$ to $F$, we can instead use Proposition~\ref{prop: uniform convergence} (which does not require the sequence of partitions to be nested).

\bibliography{quellen}{}
\bibliographystyle{amsalpha}

\end{document}